\newcommand{\calL}{{\mathcal L}}
\newcommand{\calC}{{\mathcal C}}
\newcommand{\calJ}{{\mathcal J}}
\newcommand{\calF}{{\mathcal F}}
\newcommand{\calH}{{\mathcal H}}
\newcommand{\bx}{{\mathbf x}}
\newcommand{\tx}{\tilde{{\mathbf x}}}
\newcommand{\bz}{{\mathbf z}}
\newcommand{\bbm}{{\mathbf m}}
\newcommand{\bK}{{\mathbf K}}
\DeclareMathOperator*{\argmin}{arg\,min}
\newcommand{\blue}{\color{blue}}
\newcommand{\purple}{\color{purple}}
\newcommand{\black}{\color{black}}
\newcommand{\bm}[1]{\mbox{\boldmath $#1$}}
\newtheorem{theorem}{Theorem}
\newtheorem{lemma}[theorem]{Lemma}
\newtheorem{proposition}[theorem]{Proposition}
\newtheorem{problem}{Problem}
\newtheorem{definition}{Definition}
\newtheorem{corollary}[theorem]{Corollary}
\newtheorem{remark}{Remark}
\newtheorem{assumption}{Assumption}
\newenvironment{assumptionprime}{%
  \begingroup
    \addtocounter{assumption}{-1}
    
    \begin{assumption}%
}{%
    \end{assumption}%
  \endgroup
}
\newenvironment{keywords}{%
  \vspace{1ex}\noindent\small
  \textbf{Keywords:}\ }%
  {\par\vspace{1ex}}
\title{Fundamental diagram constrained dynamic optimal transport\\ 
via proximal splitting methods\thanks{This research has been supported in part by the Swedish Research Council (VR) under grant 2020-03454, KTH Digital Futures, Swedish Research Council Distinguished Professor Grant 2017-01078, and Knut and Alice Wallenberg Foundation Wallenberg Scholar Grant.}}
\author{Anqi Dong\thanks{Anqi Dong is with the Division of Decision and Control Systems and Department of Mathematics, KTH Royal Institute of Technology, SE-100 44 Stockholm, Sweden (\textsc{anqid@kth.se}).}, 
Karl H. Johansson\thanks{Karl H. Johansson is with the Division of Decision and Control Systems and Digital Futures, KTH Royal Institute of Technology, SE-100 44 Stockholm, Sweden(\textsc{kallej@kth.se}).}, 
and Johan Karlsson\thanks{Johan Karlsson is  with Department of Mathematics and Digital Futures, KTH Royal Institute of Technology, SE-100 44 Stockholm, Sweden(\textsc{johan.karlsson@math.kth.se}).}}
\date{}
\begin{document}

\maketitle

\begin{abstract}
Optimal transport has recently been brought forward as a tool for modeling and efficiently solving a variety of flow problems, such as origin-destination problems and multi-commodity flow problems. Although the framework has shown to be effective for many large scale flow problems, the formulations typically lacks dynamic properties used in common traffic models, such as the Lighthill–Whitham–Richards model. In this work, we propose an optimal transport framework which includes dynamic constraints specified be the fundamental diagram for modeling macroscopic traffic flow. The problem is cast as a convex variant of dynamic optimal transport for arbitrary dimensionality, with additional nonlinear temporal-spatial inequality constraints of momentum, modeled after the fundamental diagram from traffic theory. This constraint imposes a density-dependent upper bound on the admissible flux, capturing flow saturation and congestion effects, and thus leaves space for kinetic optimization. The formulation follows the \emph{Benamou–Brenier} transportation rationale, whereby average kinetic energy over density and momentum fields is optimized subject to the mass conservation law. We develop first-order splitting algorithms, namely, \emph{Douglas–Rachford} and \emph{Chambolle–Pock} algorithms that exploit the separable structure of the constraint set and require only simple proximal operations, and may accommodate additional (time-varying) spatial restrictions or obstacles. Numerical experiments illustrate the impact of the constraint on transport behavior, including congestion-aware spreading, rerouting, and convergence. The framework establishes a connection between optimal transport and macroscopic traffic flow theory and provides a scalable, variational tool for modeling congestion-constricted (or saturation-aware) Wasserstein gradient flow.
\end{abstract}

\begin{keywords}
Optimal transport, fundamental diagram, traffic flow, first-order method, flux-density constraint    
\end{keywords}

\section{Introduction}\label{sec:Intro}

Ninety years ago, Greenshields’ 1935 empirical study of vehicular flow \cite{greenshields1935study} 
introduced what is now known as the fundamental diagram, which has since been refined through decades of experimental and theoretical developments \cite{smulders1990control, siebel2006fundamental, newell1993simplified}. These studies consistently reveal the principle quantified by the fundamental diagram -- \emph{``as traffic density $\rho$ increases, the velocity $v$ of individual vehicles decreases from the free-speed limit $v_0$, approaching zero as $\rho$ nears the jam density $\hat\rho$''}. Consequently, the flow-rate (also, momentum or flux) $m = \rho v$ initially increases, reaches a maximum at a critical density, and then decreases as congestion appears at $\hat\rho$. This relation, conceptualized by $m = \mathcal{Q}(\rho)$, captures the fundamental trade-off between density and flux. Similar phenomena have also been observed in pedestrian dynamics \cite{helbing1995social}, where increasing crowding gradually inhibits motion.

%Ninety years ago, Greenshields’ 1935 empirical study of vehicular flow \cite{greenshields1935study} introduced what is now known as the fundamental diagram, which has since been refined through decades of experimental and theoretical developments \cite{smulders1990control, siebel2006fundamental, newell1993simplified}. These studies consistently reveal one principle -- \emph{``as traffic density $\rho$ increases, the velocity $v$ of individual vehicles decreases from free-speed limit $v_0$, approaching zero as $\rho$ nears the jam density $\hat\rho$''}. Consequently, the flow-rate (also, momentum or flux) $m = \rho v$ initially increases, reaches a maximum at a critical density, and then decreases as congestion appears at $\hat\rho$. This non-monotonic (often piecewise-linear or smooth concave) relation, conceptualized by $m = \mathcal{Q}(\rho)$, captures the fundamental trade-off between density and flux. Similar phenomena have also been observed in pedestrian dynamics \cite{helbing1995social}, where increasing crowding gradually inhibits motion.

Twenty years later, Lighthill and Whitham \cite{lighthill1955kinematic1}, and a few months later, Richards’ independent work \cite{richards1956shock}, introduced what is now known as the \emph{Lighthill–Whitham–Richards} (LWR) model \cite{vcivcic2022front, agrawal2023two, yu2020bilateral}. This model treats traffic as a compressible one-dimensional continuum governed by the continuity equation $\partial_t \rho + \nabla_x \cdot m = 0$, and the fundamental diagram equilibrium $m = \mathcal{Q}(\rho)$ yields a scalar conservation law with concave flux
\begin{equation*}
\partial_t \rho + \nabla_x \cdot (\mathcal{Q}(\rho)) = 0.
\end{equation*}
Under appropriate regularity conditions \cite{bardos1979first}, this equation admits a unique solution, which can be
approximated numerically with models such as the \emph{Cell Transmission Model} (CTM) \cite{daganzo1994cell, canudas2018variable}. 

%Under appropriate entropy conditions \cite{bardos1979first}, this equation admits a unique solution, with numerical approximations such as the \emph{Cell Transmission Model} (CTM) \cite{daganzo1994cell, canudas2018variable}
% This formulation captures various transport phenomena, including shock formation, rarefaction waves, and capacity drops, with numerical approximations such as the \emph{Cell Transmission Model} (CTM) \cite{daganzo1994cell, canudas2018variable}.

The LWR model is a structurally simple first-order model and has been widely used in traffic flow modeling, however, it also has several limitations.
%Despite its structural simplicity, the LWR model refer to first-order model and has multiple drawbacks in traffic flow modeling. 
First, the specification $m = \mathcal{Q}(\rho)$ assumes that vehicles adjust their speed instantaneously to local density, leaving no room for transient or non-equilibrium behavior. As a result, the model struggles with sustained stop-and-go waves or capacity drops, both of which depend on dynamic interactions between flow and density. A related relaxation arises in second-order models, such as the Aw–Rascle–Zhang (ARZ) system \cite{aw2000resurrection, zhang2002non}, where momentum evolves dynamically. Moreover, because momentum is uniquely determined by density, control of the flow are restricted to boundary inflows, outflows, or structural perturbations of the flux function. There is no mechanism to adjust the interior momentum field independently. Finally, it can be noted that although the one-dimensional setting is generally well-behaved, basic properties such as uniqueness of solutions may not be preserved for extensions to multi-lane settings or two-dimensional domains \cite{agrawal2023two}.

Contemporaneous with LWR model, Kantorovich’s linear programming relaxation of Monge’s classical mass relocation problem \cite{kantorovich1942translocation, villani2021topics, rachev2006mass} seeks an optimal transportation map between an initial and a target distribution. The \emph{Monge–Kantorovich} framework of optimal mass transport (OMT) is further developed to a dynamical version by Benamou and Brenier \cite{benamou2000computational,santambrogio2015optimal}, which interprets transport cost as the kinetic energy of a continuum flow, linking OMT to fluid mechanics and geodesic paths in the Wasserstein space. These developments have led to widespread applications in physics and imaging \cite{mccann1997convexity, carrillo2003kinetic, karlsson2017generalized}, as well as in machine learning \cite{peyre2019computational,haasler2024scalable} and large-scale optimization \cite{ringh2024graph,terpin2024dynamic}. Meanwhile, physical or structural constraints along transport map or trajectory has been considered. For instance, hard bounds are considered on flux field \cite{korman2013insights, korman2015optimal}, congestion-aware penalization of high-density flows \cite{carlier2008optimal, santambrogio2015optimal}, stochastic and mean-field interactions \cite{ekren2018constrained}, and porous-media-type behavior through Wasserstein gradient flows \cite{otto2001geometry}. More recent works have imposed pointwise flow constraints to model local bottlenecks, toll regions, or throughput limits \cite{dong2024monge, stephanovitch2024optimal}. However, most OMT formulations in literature treat flow-rate and density as decoupled variables, and do not impose nonlinear physical relations that govern their interaction in real systems. Although some recent works have adapted OMT to traffic modeling, they typically reinterpret transport cost, velocity, or density within the existing metric structure, rather than enforcing the flow–density coupling as a structural constraint.

\paragraph{\noindent Contributions.} 

We introduce a convex optimization framework for modeling traffic evolution based on a dynamic optimal transport problem with a flow constraint motivated by the fundamental diagram. Specifically, we consider a model where the flow-rate can be controlled as long as it does not exceed the limit specified by the fundamental diagram. This leads to a relaxed fundamental diagram constraint $m \leq \mathcal{Q}(\rho)$ that allows the flow-rate to remain strictly below capacity in free-flow regimes, consistent with empirical observations. Unlike the LWR model, which enforces a temporal-spatial equilibrium relation $m = \mathcal{Q}(\rho)$, our formulation treats flow-rate (also, momentum or flux) as an independent variable subject to a nonlinear upper bound, which permits non-equilibrium transport behavior and allows for kinetic optimization. This formulation remains convex, scalable, and extensible, offering both a principled modeling framework and a direct optimization tool for congestion-aware transport. The main contributions of this paper can be summarized as follows:

\begin{enumerate}

\item[\textbf{i)}] We formulate a model where the constraints from the fundamental diagram and the mass conservation law are decoupled, allowing both to be imposed \emph{independently}. The uniqueness of the optimal flow is established in arbitrary dimensions. The fundamental inequality diagram enforces capacity limits in congested regions while permitting undersaturated flow elsewhere.

\item[\textbf{ii)}] The formulation uses time-dependent density and momentum (or velocity) fields as optimization variables, allowing direct integration of control parameters and marginal distributions into a model predictive framework. The resulting optimal transport flow moves mass from saturated regions toward the critical density for maximum throughput, while smoothing the distribution to mitigate congestion.

\item[\textbf{iii)}] We propose efficient first-order solvers based on Douglas–Rachford splitting and the Chambolle–Pock primal-dual algorithms. These methods require only closed-form projections and scale linearly with mesh size and avoid restrictive Courant-Friedrichs-L\'evy (CFL) stability condition, making them suitable for high-resolution, geometry-agnostic implementations. The methods are plug-and-play under a consensus-based proximal splitting framework, and are able to accommodate additional application-specific constraints.

\item[\textbf{iv)}] The proximal projection admits analytic or easily computable expressions for triangular and entire $\beta$-family fundamental diagrams, including Greenshields, Smulders, and De Romph models. Spatial heterogeneity, such as obstacles, bottlenecks, and variable lane widths, may be incorporated.
\end{enumerate}

%\smallskip
This work is the first to incorporate the fundamental diagram into a Wasserstein gradient flow formulation. This connection provides a principled bridge between congestion-aware dynamic optimal transport and classical models of macroscopic traffic flow. 

The organization of the paper is as follows. Section~\ref{sec:prelim} reviews key concepts of the fundamental diagram, the dynamic formulation of optimal transport problem, and the first-order methods used in this work. Section~\ref{sec:formulation} introduces a convex relaxation of the classical equilibrium constraint, presents the resulting optimization problem, and uniqueness of optimal flow. Sections~\ref{sec:primal} and~\ref{sec:CPmethod} details the Douglas–Rachford and Chambolle–Pock algorithms, respectively. Numerical experiments appear in Section~\ref{sec:numerical}, and concluding remarks are given in Section~\ref{sec:conclusion}.

\section{Backgroud}\label{sec:prelim}

\subsection{Fundamental diagram for traffic flow modeling}
The fundamental diagram is a macroscopic model in traffic flow theory that relates key quantities: density $\rho$, flow-rate $m$, and speed $v$. It provides a simple, yet effective way to describe how traffic behaves under varying levels of congestion, and plays a central role in understanding capacity limits and flow saturation in road networks. One of the earliest and most widely used models is the Greenshields' diagram \cite{greenshields1935study}, which assumes a linear relationship between speed and density. This leads to a quadratic relation between flow and density, quantifying that traffic flow first increases with density and then declines as congestion develops.

\begin{definition}[\bf Greenshields’ fundamental diagram]
The velocity–density relationship in Greenshield’s fundamental diagram is given by
%\begin{subequations}
%\begin{equation}\label{eq:greendiag1}
$v = v_0 \left( 1 - \rho/\hat\rho \right),$
%\end{equation}
where $v_0$ is the free-flow speed and $\hat\rho$ is the jam density at which traffic comes to a complete stop. The resulting flux–density relationship is then
\begin{equation*}%\label{eq:greendiag2}
m = \rho v_0 \left( 1 - \rho/\hat\rho \right)=:\mathcal{Q}(\rho).
\end{equation*}    
%\end{subequations}
The point where the flow is maximized as called the critical point. In this case, when the critical density $\rho_c = \hat\rho/2$  is half the jam density and vehicles travel at half of their maximum speed $v_c = v_0/2$.

%This parabolic profile reaches its peak when density is half of the jam density. The corresponding critical values are $\rho_c = \hat\rho/2$, $v_c = v_0/2$, and $m_c = (v_0 \hat\rho)/4$. At this point, flow is maximized and vehicles travel at half of their maximum speed.

\end{definition}

The Greenshields model captures basic congestion effects but does not distinguish between free-flow and congested regimes. A more realistic structure is provided by the so-called $\beta$-family of models, such as those by Smulders \cite{smulders1990control} and De Romph \cite{de1996dynamic}, which introduce a piecewise formulation to account for distinct traffic dynamics in low- and high-density regimes. In this family, the velocity–density and flux–density relationships are given by
\begin{equation}\label{eq:beta-fd}
v =
\begin{cases}
v_0 \left( 1 - \alpha \rho \right),  &\text{if } \rho < \rho_c\\
\gamma \left( 1/\rho - 1/\hat\rho \right)^\beta,  &\text{if } \rho > \rho_c,
\end{cases} \quad \mbox{and} \quad 
m =
\begin{cases}
\rho v_0 \left( 1 - \alpha \rho \right),  &\text{if } \rho < \rho_c \text{ (free-flow)}\\
\rho \gamma \left( 1/\rho - 1/\hat\rho \right)^\beta,  &\text{if } \rho > \rho_c \text{ (congested)},
\end{cases}
\end{equation} 
where $\alpha, \beta > 0$ are empirical parameters, i.e., $\alpha$ governs the slope of the velocity–density relationship in the free-flow regime, while $\beta$ controls the curvature in the congested regime. In De Romph’s model \cite{de1996dynamic}, these parameters are tunable, and continuity of the flux at the critical density $\rho_c$ is ensured by setting $\gamma = v_0(1 - \alpha \rho_c)\left(1/\rho_c - 1/\hat\rho\right)^{-\beta}$.  The model reduces to Smulders’ formulation \cite{smulders1990control} when $\alpha = \beta = 1$, as velocity and flux transition smoothly between linear and hyperbolic regimes.

\begin{figure}[htb!]
    \centering
    \includegraphics[width=0.85\linewidth]{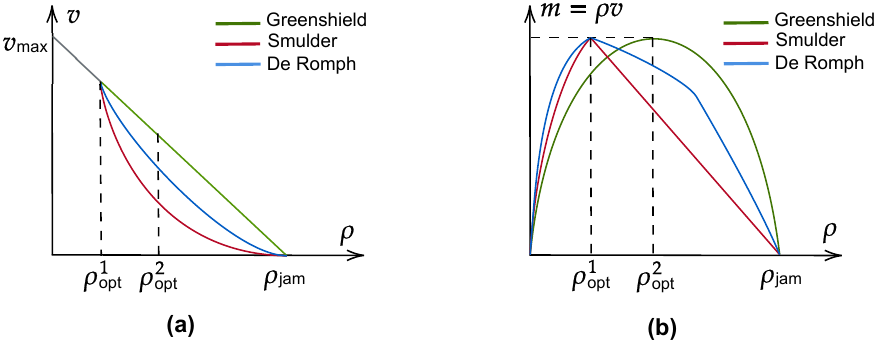}
    \caption{Fundamental diagram in traffic flow theory: \textbf{(a)} Velocity–density relationship $v(\rho)$ for Greenshields \textcolor{green!60!black}{(green)}, Smulders {\purple (red)}, and De Romph {\blue (blue)} models. All curves decrease from the free-flow speed $v_{\max}$ to $0$ at the jam density $\rho_{\mathrm{jam}}$, but differ in how they capture deceleration in congested regimes. Greenshields' model is linear, while Smulders and De Romph introduce curvature to better reflect empirical observations. \textbf{(b)} Flux–density relationship $m = \rho v(\rho)$ is derived from the speed curves. Each curve reaches a peak at an optimal density $\rho^\text{opt}$, beyond which flow decreases due to congestion.}
    \label{fig:fundementaldiagram}
\end{figure}

\subsection{Dynamic optimal transport}

Dynamic optimal transport \cite{benamou2000computational,santambrogio2015optimal}, \cite[Chapter 8]{villani2021topics}, seeks the most efficient way to reconfigure an initial measure $\mu \in \mathcal{P}_{ac}(\mathbb{R}^d)$ into a target measure%
\footnote{Here, $\mathcal{P}_{ac}(\mathbb{R}^d)$ denotes the set of Borel measures that are absolutely continuous with respect to Lebesgue measure.}
$\nu \in \mathcal{P}_{ac}(\mathbb{R}^d)$ over a finite time interval $[0, t_f]$.  
The Lagrangian formulation labels each mass unit by its initial position $x \in \mathbb{R}^d$, and its trajectory is given by a time-dependent map $X_t(x): [0, t_f] \times \mathbb{R}^d \to \mathbb{R}^d$, which specifies the position at time t of the particle that began at x. The objective is to minimize the total kinetic energy of the flow, i.e.,
\begin{equation*}
\int_0^{t_f} \int_{\mathbb{R}^d} \frac{1}{2} \left\| \partial_t X_t(x) \right\|^2 \, d\mu(x) \, dt,
\end{equation*}
where $\|\cdot\|$ denotes the Euclidean norm in $\mathbb{R}^d$. The trajectory $X_t$ must satisfy the marginal constraints $X_0 \# \mu = \mu$ and $X_{t_f} \# \mu = \nu$, where the notation $\#$ denotes the push-forward map, i.e., $T\#\mu=\nu$ if $\mu(T^{-1}(S))=\nu(S)$ for any Borel set $S\subset \mathbb R^n$. The problem selects a displacement interpolation minimizing total kinetic effort, with mass evolution $\mu_t = X_t \# \mu$ and optimal path $\displaystyle X_t^*(x) = x + \frac{t}{t_f}(T(x) - x)$, where $T$ is the Monge map from $\mu$ to $\nu$.

%This variational problem selects a displacement interpolation between $\mu$ and $\nu$ that minimizes the total kinetic effort. The evolution of mass is described by the time-dependent family of measures $\mu_t = X_t \# \mu$ and the optimal trajectory is given by $X_t^*(x) = x + \frac{t}{t_f}(T(x) - x)$,  where $T$ is the Monge map pushing $\mu$ forward to $\nu$. 
\black

The Eulerian formulation, introduced by Benamou and Brenier \cite{benamou2000computational}, recasts the problem in terms of a time-dependent density% 
\footnote{The space $\mathcal{P}_{ac}(\mathbb{R}^d)\text{–}w^*$ denotes $\mathcal{P}_{ac}(\mathbb{R}^d)$ endowed with the weak-$*$ topology.} 
$\rho \in C\left([0,t_f]; \mathcal{P}_{ac}(\mathbb{R}^d)\text{–}w^*\right)$
% \footnote{The space $\mathcal{P}_{ac}(\mathbb{R}^d)\text{–}w^*$ denotes $\mathcal{P}_{ac}(\mathbb{R}^d)$ endowed with the weak-$\star$ topology, i.e., the coarsest topology under which the map $\mu \mapsto \int \phi \, d\mu$ is continuous for every $\phi \in C_c(\mathbb{R}^d)$.} 
and a velocity field $v \in L^2\left((0, t_f) \times \mathbb{R}^d; \mathbb{R}^d\right)$, so that the kinetic energy can be rewritten as 
\begin{equation*}
\mathcal{J}(\rho, v) = \int_0^{t_f} \int_{\mathbb{R}^d} \;\;\frac{1}{2} \|v(t, x)\|^2 \rho(t, x) \, dx \, dt,    
\end{equation*}
subject to the continuity equation $\partial_t \rho + \nabla_x \cdot (\rho v) = 0$ in the distributional sense \cite[Chapter 8.1]{ambrosio2008gradient}, \cite{villani2021topics}
%\footnote{That is, for all test functions $\phi \in C_c^\infty((0, t_f) \times \mathbb{R}^d)$, it holds that $\int_0^{t_f} \int_{\mathbb{R}^d} \left( \partial_t \phi \cdot \rho + \nabla_x \phi \cdot m \right) dx\,dt = 0$.} 
with boundary conditions $\rho(0,x) = \mu(x)$ and $\rho(t_f,x) = \nu(x)$. The Eulerian formulation yields optimal transport as a convex optimization problem over density and momentum, thereby amenable to analysis via PDE theory \cite{santambrogio2015optimal} and efficient numerical methods \cite{benamou2000computational}.
The optimal velocity field $v^*(t,x) = \nabla_x \cdot \varphi(t,x)$ arises from a potential $\varphi$ solving the Hamilton–Jacobi equation $\displaystyle \partial_t \varphi + \frac{1}{2} \|\nabla_x \varphi\|^2 = 0$, see also \cite[Section 6]{santambrogio2015optimal}. Without pointwise upper bound imposed on $\rho^*$, the corresponding momentum $m^*(t,x) = \rho^*(t,x) \nabla_x \varphi(t,x)$ may attain arbitrarily large magnitude in high-density regions. This allows for unbounded flux and fails to capture congestion effects.

\subsection{First-order Methods for Convex Optimization}

The proximal splitting method considers solving the convex optimization problems
\begin{equation}\label{eq:composite}
\min_{\bx \in \mathcal{H}} \; f(\bx) + g(\bx)
\end{equation}
where $f$ and $g$ are convex, closed, and proper (CCP) functions on a finite-dimensional Hilbert space $\mathcal{H}$. Problems in such form arise frequently in applications where $f$ represents a fidelity term, such as data fitting or likelihood, while $g$ encodes structure, constraints, or regularization. The key computational challenge is to efficiently solve such problems for non-smooth functions when the dimension of the domain is large or infinite.
%when at least one of the terms is non-smooth or difficult to differentiate.

The optimality condition (under suitable regularity conditions \cite[Page 10]{ryu2022large}) %\cite[Page 225]{rockafellar1997convex}) 
for \eqref{eq:composite} is the monotone inclusion
\begin{equation}\label{eq:monotone-inclusion}
0 \in \partial f(\bx) + \partial g(\bx),
\end{equation}
where $\partial f$ and $\partial g$ denote the subdifferentials of $f$ and $g$. A broad class of first-order methods solves \eqref{eq:monotone-inclusion} using proximal operators, including well-known schemes such as Douglas–Rachford splitting method and Chambolle–Pock method. These methods are based on evaluations of the proximal operators of $f$ and $g$ or their conjugate dual, and are effective when they are \emph{simple functions} whose proximal maps are computationally efficient to evaluate, e.g., admit closed‐form solutions 
\cite{papadakis2014optimal}.

\begin{definition}[\bf Proximal operator]\label{def:proximal}
Let $\mathcal{H}$ be a Hilbert space and $f: \mathcal{H} \to \mathbb{R} \cup {+\infty}$ be proper, convex, and lower semi-continuous. The proximal operator of $f$ with parameter $\gamma > 0$ is 
\begin{equation}\label{eq:prox-def}
\operatorname{Prox}_{\gamma f}(\bx) := \argmin_{\tilde{\bx} \in \mathcal{H}} \;\; \gamma f(\tilde{\bx}) + \frac{1}{2} \|\tilde{\bx}-\bx\|^2.
\end{equation}
\end{definition}
The proximal operator can be interpreted as an implicit descent step. When $f$ is differentiable and $\gamma$ is small, $\operatorname{Prox}_{\gamma f}(\bx)$ approximates a gradient update. For non-smooth or constrained problems, it serves as a structured update that remains tractable even when subgradients are not explicitly computable. Importantly, the proximal operator satisfies the identity
\begin{equation*}
\operatorname{Prox}_{\gamma f} = \left(\mathrm{Id} + \gamma \, \partial f \right)^{-1},  
\end{equation*}
which connects it to the resolvent of the subdifferential and underpins its role in splitting methods for monotone operators. Below, we summarize the simple-to-implement first-order schemes based on proximal splitting.

\subsubsection*{\bf Douglas–Rachford splitting (DRS)}
DRS solves \eqref{eq:monotone-inclusion} using a fixed-point iteration based on the reflected resolvents of $\partial f$ and $\partial g$. Given a step size $\alpha > 0$, the method performs iteratively updates
\begin{equation}\label{update-drs}
\begin{aligned}
\bx^{k+1/2} &= \operatorname{Prox}_{\alpha g}(\bz^k), \\
\bx^{k+1} \;\; \; &= \operatorname{Prox}_{\alpha f}(2\bx^{k+1/2} - \bz^k), \\
\bz^{k+1}   \;\;\;&= \bz^k + \bx^{k+1} - \bx^{k+1/2}.
\end{aligned}\qquad \text{\bf (DRS)}
\end{equation}
If the problem \eqref{eq:monotone-inclusion} has a solution, then  the sequence $\{\bx^k\}$ converges weakly to a solution of \eqref{eq:composite} \cite{ryu2022large}.  
This method alternates between evaluating the proximal operators of $f$ and $g$, and is effective when $f$ and $g$ are both simple functions.

\subsubsection*{\bf Chambolle–Pock method} The Chambolle–Pock method \cite{chambolle2011first,gangbo2019unnormalized}, \cite[Chapter 3]{ryu2022large} is also known as the primal–dual hybrid gradient (PDHG) method, and solves problems of the form
\begin{equation*}
\min_{\bx \in \calH_1} \; f(\bx) + g(\bK\bx),
\end{equation*}
where $\bK: \calH_1 \to \calH_2$ is a linear operator. It uses the saddle point formulation
\begin{equation*}
\min_{\bx \in \calH_1} \max_{\phi \in \calH_2} \; \langle \bK\bx,\phi \rangle + f(\bx) - g^*(\phi),
\end{equation*}
where  $\phi \in \calH_2$ is the dual variable and  $g^*$ is the convex conjugate (Fenchel dual) of $g$.
This method alternates between forward and backward steps in the primal and dual spaces according to 
\begin{equation}\label{update-cp}
\begin{aligned}
\phi^{k+1} &= \operatorname{Prox}_{\sigma g^*}(\phi^k + \sigma \bK \bar{\bx}^k), \\
\bx^{k+1} &= \operatorname{Prox}_{\tau f}(\bx^k - \tau \bK^* \phi^{k+1}), \\
\bar{\bx}^{k+1} &= 2\bx^{k+1} - \bx^k,
\end{aligned} \qquad \text{\bf (CP)}
\end{equation}
and can be shown to converge when $\tau, \sigma > 0$ satisfy $\tau \sigma \|\bK\|^2 < 1$.
It is particularly well-suited for large-scale problems with structured non-smooth terms, and widely adopted in imaging processing and PDE-constrained optimization.

%It introduces a dual variable $\phi \in \calH_2$ and the problem yields a saddle-point problem
%\begin{equation*}
%\min_{\bx \in \calH_1} \max_{\phi \in \calH_2} \; \langle \bK\bx,\phi \rangle + f(\bx) - g^*(\phi)
%\end{equation*}
%with $g^*$ is the convex conjugate (Fenchel dual) of $g$, and performs the updates of the min-max problem as 
%\begin{equation}\label{update-cp}
%\begin{aligned}
%\phi^{k+1} &= \operatorname{Prox}_{\sigma g^*}(\phi^k + \sigma \bK \bar{\bx}^k), \\
%\bx^{k+1} &= \operatorname{Prox}_{\tau f}(\bx^k - \tau \bK^\top \phi^{k+1}), \\
%\bar{\bx}^{k+1} &= 2\bx^{k+1} - \bx^k,
%\end{aligned} \qquad \text{\bf (CP)}
%\end{equation}
%with $\tau, \sigma > 0$ satisfy $\tau \sigma \|\bK\|^2 < 1$. This method alternates between forward and backward steps in the primal and dual spaces. It is particularly well-suited for large-scale problems with structured non-smooth terms, and widely adopted in imaging processing and PDE-constrained optimization.

\subsection{Notation}
The notations used throughout the paper are summarized in Table \ref{tab:notation}.
\begin{table}[htb!]
\caption{Summary of notation used in the paper.}
\label{tab:notation}
\centering
\begin{tabularx}{\textwidth}{@{}lX@{}}
\toprule
$\rho,~\hat{\rho},~\rho_c$ & density of mass,  jam density, critical density\\
$m,~m_c$ & flow-rate (also, flux and momentum) $m = \rho v$, maximum flux \\
$v,~v_0$ & velocity field, free-flow speed\\
$\calJ(\rho, m)$ & kinetic energy cost/objective functional\\
$\calC,\calF$ & constraint set for continuity and fundamental diagram\\
$f,~f^*$ & function and its corresponding conjugate dual\\
$\bK,~\bK^*$ & linear operator and adjoint operator\\
$\iota_\calC$, $\iota_\calF$ & indicator functions of constraint set $\calC$ and $\calF$ \\
$\operatorname{Prox}_f$, $\operatorname{Proj}_{\mathcal F}$ & Proximal operator of $f$, projection operator of $\calF(x) = \displaystyle x: = \argmin_{\hat x\in\calF} \frac{1}{2}\|x-\hat x\|^2$\\
$\|\cdot\|$, $\|\cdot\|^2$ & $L^2$-norm, squared $L^2$-norm\\
\bottomrule
\end{tabularx}
\end{table}

\section{Dynamic optimal transport under fundamental diagram constraint}\label{sec:formulation}

\subsection{Motivation}
We describe flow macroscopically through the density field $\rho(t,x)$, interpreted as mass per unit volume. The classical LWR model couples the continuity equation with Greenshields’ fundamental diagram to characterize the evolution of $\rho(t,x)$, abiding nonlinear partial differential equation
\begin{equation*}
\partial_t \rho + \nabla_x \cdot \left( \rho v_0 \left(1 - \rho/\hat{\rho} \right) \right) = 0,
\end{equation*}
where $v_0$ is the free-flow speed and $\hat{\rho}$ the jam density. The velocity is deterministic by the local density as $v(\rho) = v_0(1 - \rho/\hat{\rho})$, which enforces an instantaneous equilibrium between flow and density. This \emph{first-order} model may capture basic traffic dynamics such as shock formation and rarefaction. As a consequence, flow and flux optimization in the LWR model is inherently limited. Since the dynamics evolve purely through advection of $\rho$, control/optimization can only enter via boundary conditions or through exogenous modifications of the flux, such as local speed limits, ramp metering, or capacity drops, as there is no dynamic momentum variable to manipulate. To this end, \emph{second-order} models, e.g., Aw–Rascle–Zhang are proposed \cite{zhang2002non}.

We model congestion by treating the fundamental diagram as a temporal-spatial upper bound on momentum: under free-flow conditions, the momentum may lie well below capacity, while in congested regions it may saturate the limit. This formulation avoids assuming instantaneous equilibrium between flux and density, aligning with empirical observations. We retain the continuity equation $\partial_t \rho(t,x) + \nabla_x \cdot m(t,x) = 0$, interpreted in the sense of distributions, with bounded support for every $\rho(t,\cdot)$ and \emph{no-flux} boundary condition.

Congestion is imposed pointwise through the convex inequality fundamental diagram 
\begin{equation*}
\|m(t,x)\|\;\le\;\rho(t,x)\,v_0 \Bigl(1- \rho(t,x)/\hat\rho(t,x)\Bigr)
\quad\text{for}\;\;(t,x)\in (0,t_f)\times\mathbb R^{d},    
\end{equation*}
where $\|\cdot\|$ is the Euclidean norm in $\mathbb R^{d}$ and
$v_0,\;\hat\rho>0$ are uniform constants representing the free-speed limit
and the jam density. %Consequently $0\le\rho(t,x)\le\hat\rho$ almost everywhere. %To ensure that both this constraint and the kinetic-energy functional are well defined, we adopt the regularity condition in \cite[Eq.(8.6)]{villani2021topics} with the addition assumption for $\rho$, i.e., $\rho \in L_+^\infty \left((0,t_f) \times \mathbb{R}^d \right)$, and the degeneracy condition\footnote{Equivalently, we can set $\displaystyle \frac{\|m\|^{2}}{\rho}=0$ on $\{\rho=0\}$.} $m(t,x)=0\quad\text{for a.e. }(t,x)$ with $\rho(t,x)=0$. 
To ensure that both the constraint and the kinetic-energy functional are well defined, we follow a standard regularity framework as detailed in \cite[Eq.\,(8.6)]{villani2021topics}, with the additional assumption that $\rho \in L_+^\infty\left((0,t_f) \times \mathbb{R}^d\right)$, for pointwise fundamental-diagram constraint. 
%We also impose the degeneracy condition $m(t,x) = 0$ for $(t,x)$ such that $\rho(t,x) = 0$\footnote{Equivalently, $\displaystyle \frac{\|m\|^2}{\rho} = 0$ on the set where $\rho = 0$}.
%we assume
% \begin{equation*}
% \rho \in L_+^\infty \left((0,t_f) \times \mathbb{R}^d \right) \cap L_+^1 \left((0,t_f) \times \mathbb{R}^d\right), \quad
% m \in L^2\left((0,t_f) \times \mathbb{R}^d; \mathbb{R}^d\right),    
% \end{equation*}
The admissible pair $(\rho, m)$ thus lies in a Banach–Hilbert product space, 
%i.e., $\rho$ in the closed convex cone of Banach space $L_+^\infty \cap L_+^1$, and $m$ in the Hilbert space $L^2$. The fundamental diagram constraint is imposed point-wise in space and time, 
while the continuity equation holds weakly in the sense of distributions. When coupled with the objective of Benamou-Brenier kinetic energy functional, the resulting variational principle gives rise to a constrained Wasserstein gradient flow \cite{figalli2021invitation,santambrogio2015optimal}: the system evolves via steepest descent of kinetic energy, but now subject to a nonlinear congestion cap that bounds the instantaneous flux.

\subsection{Problem formulation}
We are now in the position to formalize the key constraints from the previous subsection, state the feasibility condition, and present the convex primal problem that underlies the remainder of the paper. 

\begin{assumption}[\bf Existence of a feasible flow]\label{assumption:non-empty}
We assume there exists at least one pair $(\rho, m)$, so that it satisfies
\begin{enumerate}
\item[(1)] the distributional sense of continuity equation, with initial and terminal conditions $\rho(0,x) = \mu(x)$ and $\rho(t_f,x) = \nu(x)$; 
\item[(2)] the fundamental diagram constraint: $\|m(t,x)\| \leq \rho(t,x)\, v_0(t,x) \left( 1 - \rho(t,x)/\hat\rho(t,x) \right)$. %for a.e. $(t,x)$;
%\item[(3)] the degeneracy condition: $ m(t,x) = 0 \quad \text{a.e. on } \{ \rho(t,x) = 0 \}$.    
\end{enumerate}
\end{assumption}

For feasibility, the positive free-speed limit $v_0(t,x)\in R_+^{d}$ and jam density $\hat\rho(t,x) \in  R_+^{d}$ must be sufficiently large to admit at least one feasible solution. 
%The admissible pair $(\rho, m)$ lies in a Banach–Hilbert product space, ensuring that both the distributional continuity equation and the kinetic energy integral are well-defined. 
Herein, the fundamental diagram enters as a pointwise convex inequality constraint that modulates the admissible momentum according to the local density. In particular, the fundamental diagram constraint also enforces that the momentum vanishes on regions where the density is zero, i.e., $m = 0$ whenever $\rho = 0$.

%\cite[Remark 8.2]{villani2021topics}
We are now in the position to formally introduce our problem formulation as follows.
\begin{problem}[\bf Primal problem]\label{prob:1}
Let $\mu$ and $\nu$ be given initial and terminal probability densities on $\mathbb{R}^d$ with finite second moments. The objective is to seek a density–momentum pair $\Big(\rho(t,x), m(t,x)\Big)$ that minimizes the kinetic energy
\begin{subequations}
\begin{equation}\label{eq:obj}
\mathcal{J}(\rho, m) := \int_0^{t_f} \int_{\mathbb{R}^d} \frac{\|m(t,x)\|^2}{2\rho(t,x)} \, dx \, dt,
\end{equation}
subject to the continuity equation
\begin{equation}\label{eq:countinuous}
\partial_t \rho(t,x) + \nabla_x \cdot m(t,x) = 0,
\end{equation}
with the boundary conditions $\rho(0, x) = \mu(x), \; \rho(t_f, x) = \nu(x)$. While $(\rho,m)$ abides temporal-spatial fundamental diagram constraint
\begin{equation}\label{eq:fdconstraint}
\|m(t,x)\| \leq \rho(t,x) v_0(t,x) \left( 1 - \frac{\rho(t,x)}{\hat\rho(t,x)} \right) =: \mathcal{Q}\left(\rho(t,x)\right).
\end{equation}
\end{subequations}
\end{problem}

The continuity equation \eqref{eq:countinuous}, together with the boundary conditions $\rho(0,x) = \mu(x)$ and $\rho(t_f,x) = \nu(x)$, defines a convex admissible set of density–momentum pairs, denoted as 
\begin{equation*}
\mathcal{C} := \left\{ (\rho, m) \; \middle| \;
\begin{aligned}
&\partial_t \rho + \nabla_x \cdot m = 0, \; \rho(0, x) = \mu(x),\;\mbox{ and } \;\rho(t_f, x) = \nu(x) %\\
%&\rho(t,x) \geq 0\quad \text{a.e. in } (0,t_f) \times \mathbb{R}^d
\end{aligned}
\right\},
\end{equation*}
and by defining the linear operator $\bK:= (\partial_t, \nabla_x)$ and boundary values $y$, we rewrite $\mathcal{C} := \left\{ \bx \;\middle|\; \bK \bx = y \right\}$. The fundamental diagram defines a pointwise convex constraint on $(\rho, m)$, i.e.,
\begin{equation*}
\mathcal{F} := \left\{ (\rho, m) \; \middle| \; \|m(t,x)\| - \rho(t,x) v_0 \left( 1 - \rho(t,x)/\hat\rho(t,x) \right) \leq 0 \right\}.
\end{equation*}
Problem~\ref{prob:1} is thus a convex optimization problem with nonlinear inequality and linear continuity constraints. We may further define the indicator functions of constraint sets
\[
\iota_{\mathcal{C}}(\rho, m) =
\begin{cases}
0 & \text{if } (\rho, m) \in \mathcal{C}\\
+\infty & \text{otherwise}
\end{cases} \quad 
\mbox{and} \quad
\iota_{\mathcal{F}}(\rho, m) =
\begin{cases}
0 & \text{if } (\rho, m) \in \mathcal{F}\\
+\infty & \text{otherwise}
\end{cases}
\]
so that Problem~\ref{prob:1} can be rewritten in the more compact form 
\begin{equation}\label{eq:obj_compact}
\min_{(\rho, m)} \; \mathcal{J}(\rho, m) \quad \text{subject to } \quad (\rho, m) \in \mathcal{C} \cap \mathcal{F}.
\end{equation}

\begin{remark}[\bf Obstacle modelling]\label{remark:obs}
Obstacles and barriers can be modeled by restricting the spatial domain where transport is permitted by generalizing the kinetic cost functional. In particular, let $\Omega_{\mathrm{free}}\subset\Omega$ be the subset of space where transport is permitted, and set $\Omega_{\mathrm{obs}}=\Omega\setminus\Omega_{\mathrm{free}}$.  
We enforce the constraint $(\rho,m)=(0,0)$ on $\Omega_{\mathrm{obs}}$ by adding the indicator function
\[
\iota_{\Omega_{\mathrm{free}}}(\rho,m)=
\begin{cases}
0, & (\rho,m)\equiv(0,0)\text{ on }\Omega_{\mathrm{obs}},\\
+\infty, & \text{otherwise.}
\end{cases}
\]
The kinetic energy functional with obstacles is therefore
$\mathcal J_{\mathrm{obs}}(\rho,m)=\mathcal J(\rho,m)+\iota_{\Omega_{\mathrm{free}}}(\rho,m),$ see \cite[Section~5.1]{papadakis2014optimal} for the identical construction in a weighted form.
\end{remark}
% \begin{remark}[\bf Obstacle modeling]\label{remark:obs}
% Obstacles and barriers can be modeled by restricting the spatial domain where transport is permitted. Let $\Omega_{\mathrm{free}} \subset \mathbb{R}^d$ denote the open region where mass is allowed to flow. This modifies the admissible set by requiring that both $\rho(t,x)$ and $m(t,x)$ vanish almost everywhere outside $\Omega_{\mathrm{free}}$, for all $t \in [0,t_f]$. This constraint can be encoded via an indicator functional $\iota_{\Omega_{\mathrm{free}}}(\rho, m)$, which equals zero if the supports of $\rho$ and $m$ are contained in $\Omega_{\mathrm{free}}$, and $+\infty$ otherwise. The modified objective becomes $\mathcal{J}_{\mathrm{obs}}(\rho, m) = \mathcal{J}(\rho, m) + \iota_{\Omega_{\mathrm{free}}}(\rho, m)$, see also \cite{papadakis2014optimal}.
% \end{remark}

We now show the uniqueness of the optimal solution in Problem \ref{prob:1} as follows.

\begin{theorem}[\bf Uniqueness]\label{thm:unique}
Problem~\ref{prob:1} admits a \emph{unique} optimal solution $(\rho^*, m^*)$ when Assumption~\ref{assumption:non-empty} holds.
\end{theorem}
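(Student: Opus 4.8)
The plan is to exploit convexity of both the cost functional and the feasible set, and then to contend with the fact that $\mathcal{J}$ is \emph{not} strictly convex. First I would record that the integrand $(\rho,m)\mapsto \|m\|^2/(2\rho)$ is the perspective function of the strictly convex map $v\mapsto \tfrac12\|v\|^2$, hence jointly convex on $\{\rho>0\}$ under the usual lower-semicontinuous extension ($\|m\|^2/(2\rho)=0$ when $(\rho,m)=(0,0)$ and $+\infty$ when $\rho=0$, $m\neq 0$). Integration preserves convexity, so $\mathcal{J}$ is convex and lower semicontinuous. Since $\mathcal{C}=\{\bx:\bK\bx=y\}$ is an affine subspace and $\mathcal{F}$ is the sublevel set of $\|m\|-\mathcal{Q}(\rho)$, which is convex in $(\rho,m)$ because $\|m\|$ is convex and $\mathcal{Q}$ is concave in $\rho$, the feasible set $\mathcal{C}\cap\mathcal{F}$ is convex and, by Assumption~\ref{assumption:non-empty}, nonempty. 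Together with coercivity of $\mathcal{J}$ on admissible pairs this yields existence of a minimizer by the direct method; the substantive claim is uniqueness.

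For uniqueness, suppose $(\rho_0,m_0)$ and $(\rho_1,m_1)$ are both optimal and set $(\rho_\theta,m_\theta)=(1-\theta)(\rho_0,m_0)+\theta(\rho_1,m_1)$, which is feasible by convexity. Convexity of $\mathcal{J}$ gives $\mathcal{J}(\rho_\theta,m_\theta)\le$ the optimal value, and since both endpoints attain it, the inequality is an equality for every $\theta\in[0,1]$, so $\theta\mapsto \mathcal{J}(\rho_\theta,m_\theta)$ is affine. The crux is the equality case of Jensen's inequality for the perspective integrand: because $v\mapsto \tfrac12\|v\|^2$ is strictly convex, the pointwise map $\theta\mapsto \|m_\theta\|^2/(2\rho_\theta)$ is affine only where the two velocities coincide, i.e.\ $m_0/\rho_0=m_1/\rho_1=:\bar v$ at a.e.\ $(t,x)$ with $\rho_0,\rho_1>0$. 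Accounting for the vacuum, where finiteness of $\mathcal{J}$ and the constraint $\mathcal{F}$ both force $m_i=0$ whenever $\rho_i=0$, one checks that in every case the difference obeys $m_1-m_0=w\,(\rho_1-\rho_0)$ for a bounded velocity field $w$ with $\|w\|\le v_0$, the bound following from $\|m\|\le\mathcal{Q}(\rho)\le\rho\,v_0$.

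It then remains to propagate this pointwise relation through the linear continuity constraint. Subtracting the two continuity equations, $\delta\rho:=\rho_1-\rho_0$ and $\delta m:=m_1-m_0=w\,\delta\rho$ satisfy the homogeneous transport equation $\partial_t\delta\rho+\nabla_x\cdot(w\,\delta\rho)=0$ with null data $\delta\rho(0,\cdot)=\delta\rho(t_f,\cdot)=0$. Under the regularity framework adopted for Problem~\ref{prob:1} (bounded densities in $L^\infty_+$ and the admissibility of \cite[Eq.~(8.6)]{villani2021topics}), this equation with bounded velocity and zero initial datum admits only the trivial solution, whence $\delta\rho\equiv0$ and then $\delta m=w\,\delta\rho\equiv0$; thus $(\rho_0,m_0)=(\rho_1,m_1)$.

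I expect the main obstacle to be precisely this last step: the velocity $\bar v$, and hence $w$, is not prescribed but reconstructed from the optimizers, so it is only bounded and need not be Lipschitz or Sobolev a priori. Making uniqueness of the homogeneous continuity equation rigorous therefore requires either invoking well-posedness for bounded, low-regularity vector fields in the spirit of DiPerna--Lions/Ambrosio under the paper's hypotheses, or a self-contained energy/duality argument tailored to $(\rho^*,m^*)$; the careful treatment of the vacuum set $\{\rho_0=0\}\cup\{\rho_1=0\}$, where the velocity is undefined, is the delicate point.
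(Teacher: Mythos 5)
Your existence argument is sound and matches the paper's: joint convexity of the perspective integrand (the paper writes it as a pointwise supremum of affine functions, you invoke the perspective-function characterization --- same content), convexity and weak closedness of $\mathcal{C}\cap\mathcal{F}$, nonemptiness from Assumption~\ref{assumption:non-empty}, and the direct method. You should know, however, that this is in fact \emph{all} that the paper's appendix proof contains: it establishes existence only, and then explicitly concedes that ``uniqueness is not guaranteed unless additional structural conditions are enforced, such as strict positivity of $\rho$ or strict convexity of constraints.'' So the paper never actually proves the uniqueness assertion of Theorem~\ref{thm:unique}; your proposal attempts strictly more than the published argument.

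That attempt, however, has a genuine gap, and it is exactly the one you flag in your final paragraph. The equality-case analysis is fine: affinity of $\theta\mapsto\mathcal{J}(\rho_\theta,m_\theta)$ forces pointwise affinity a.e., strict convexity of $v\mapsto\tfrac12\|v\|^2$ then gives $m_0/\rho_0=m_1/\rho_1$ a.e.\ on $\{\rho_0>0,\rho_1>0\}$, and the constraint $\mathcal{F}$ (or finiteness of $\mathcal{J}$) forces the momenta to vanish on the vacuum, so $\delta m=w\,\delta\rho$ with $\|w\|\le v_0$ does hold a.e. But the concluding step --- that the homogeneous continuity equation $\partial_t\delta\rho+\nabla_x\cdot(w\,\delta\rho)=0$ with $\delta\rho(0,\cdot)=0$ admits only the trivial solution --- is false for a velocity field that is merely bounded. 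Uniqueness for the continuity equation requires renormalization-type hypotheses: $w\in W^{1,1}_{\mathrm{loc}}$ (DiPerna--Lions) or $w\in BV_{\mathrm{loc}}$ (Ambrosio), together with control of $\nabla_x\cdot w$; Depauw's counterexample exhibits a bounded, divergence-free planar field with a nontrivial weak solution emanating from zero initial data. Here $w$ is reconstructed a.e.\ from the two optimizers and carries no regularity beyond $L^\infty$ and no divergence control whatsoever, so none of these theorems apply; having data prescribed at both endpoints $t=0$ and $t=t_f$ does not rescue the argument. Hence your proof, as written, does not close. Completing it would require either extracting extra regularity of the optimal velocity (e.g.\ from the Hamilton--Jacobi/duality structure of the problem) or imposing conditions such as strict positivity of $\rho^*$ --- which is precisely the caveat behind which the paper itself retreats.
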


\begin{proof}{Proof.}
See Appendix.
\end{proof}

Next, to apply proximal splitting methods, we adopt the staggered grid of \cite[Section 3]{papadakis2014optimal}.  The cube $[0,1]^d$ is sampled on a Cartesian lattice with spacings $\Delta x_\ell=1/N_\ell$, and the unit-time interval is divided into $P$ steps of length $\Delta t=1/P$.  Densities $\rho_{k+\frac12,\mathbf i}$ are placed at half-time levels, while each momentum component $m^{(\ell)}_{k,\mathbf i\pm\frac12}$ is recorded a half spatial step ahead and behind in the $x_\ell$ direction.  The discrete space–time divergence is defined as 
\begin{equation}\label{eq:div-discrete}
\bK(m,\rho)_{k,\mathbf i}
=\sum_{\ell=1}^{d}\frac{m^{(\ell)}_{k,\mathbf i+\frac12}-m^{(\ell)}_{k,\mathbf i-\frac12}}{\Delta x_\ell}
+\frac{\rho_{k+\frac12,\mathbf i}-\rho_{k-\frac12,\mathbf i}}{\Delta t}, 
\end{equation}
is set to zero, together with zero normal momentum on the boundary and the marginals $\rho_{-\frac12,\mathbf i}=\mu_{\mathbf i}$ and $\rho_{P+\frac12,\mathbf i}=\nu_{\mathbf i}$.  A midpoint average relocates the variables to the centered lattice,
\begin{equation*}
\tilde m^{(\ell)}_{k,\mathbf i}=\frac12\!\bigl(m^{(\ell)}_{k,\mathbf i+\frac12}+m^{(\ell)}_{k,\mathbf i-\frac12}\bigr),
\quad \rho_{k,\mathbf i}=\frac12 \bigl(\rho_{k+\frac12,\mathbf i}+\rho_{k-\frac12,\mathbf i}\bigr), 
\end{equation*}
and at each center grid, we assign the kinetic term
\begin{equation*}
\calJ_{k,\mathbf i}= \frac{\|\tilde m_{k,\mathbf i}\|^{2}}{2\rho_{k,\mathbf i}},
\end{equation*}
subject to the corresponding center grid fundamental-diagram bound, i.e.,
\begin{equation*}
\|\tilde m_{k,\mathbf i}\| \le \rho_{k,\mathbf i}\,v_0\!\left(1-\frac{\rho_{k,\mathbf i}}{\hat\rho}\right).    
\end{equation*}
For simplicity, we use the same notation as in the continuous setting and omit the discretization indices and grid dependencies in the rest of the paper.

\begin{remark}[\bf Regularity]\label{rem:regularity}
On the staggered lattice, the discrete densities $\rho_{k+\frac12,\mathbf i}$ form an element of the Hilbert space $\mathcal H_\rho =L^{2}_{+}\bigl((0,t_f)\times\mathbb R^{d}\bigr)$,
and the discrete momentum $m^{(\ell)}_{k,\mathbf i\pm\frac12}$ belong to
$\mathcal H_m = L^{2}\bigl((0,t_f)\times\mathbb R^{d};\mathbb R^{d}\bigr)$.
Each space is equipped with the weighted Euclidean inner product obtained by multiplying every entry by its corresponding space–time volume element.  The discrete space–time divergence in \eqref{eq:div-discrete} is thus a bounded linear operator $\mathbf K: \mathcal H_\rho \times \mathcal H_m \rightarrow \mathcal H_\rho$. Consequently, the problem is convex and posed on the finite-dimensional Hilbert product space.
\end{remark}

\begin{assumptionprime}\label{ass:feasible}
There exists at least one pair $(\rho,m)\in\mathcal H_\rho\times\mathcal H_m$ with $\rho>0$, such that  
\begin{enumerate}
\item[(1)] the continuity equation holds with the prescribed initial and terminal densities;
\item[(2)] the fundamental-diagram bound $\|m(t,x)\|<\rho(t,x)\,v_0(t,x)\bigl(1-\rho(t,x)/\hat\rho(t,x)\bigr)$ is satisfied for every $(t,x)$.
\end{enumerate}
\end{assumptionprime}

Assumption~\ref{ass:feasible} ensures that there exists a strictly feasible point so the relative interior $\operatorname{ri}(\calC\cap \calF)\neq\varnothing$, and therefore, $(\rho^\star,m^\star)$ gives $0\in\partial \calJ(\rho^\star,m^\star)+\partial\iota_{\calC\cap \calF}(\rho^\star,m^\star)$ with total duality holds \cite[Page 50]{ryu2022large} for Problem \ref{prob:1}.
% \purple 
% Assumption~\ref{ass:feasible} ensures that $\calC\cap \calF\neq\varnothing$ and
% that $\calJ$ attains its minimum on $\calC\cap \calF$ since at least one feasible  pair $\bx^\star = (\rho^\star,m^\star)$ exists. Moreover, there exists a strictly feasible point so the relative interior $\operatorname{ri}(\calC\cap \calF)\neq\varnothing$, and $\partial \bigl(\calJ+\iota_{\calC\cap \calF}\bigr)
% = \partial \calJ + \partial\iota_{\calC\cap \calF}$. Therefore, $(\rho^\star,m^\star)$ gives $0\in\partial \calJ(\rho^\star,m^\star)+\partial\iota_{\calC\cap \calF}(\rho^\star,m^\star)$, and the total duality holds \cite[Page 50]{ryu2022large}.
% \black

% \begin{assumptionprime}\label{ass:feasible}
% There exists at least one non-zero optimal pair $(\rho,m)\in\mathcal H_\rho\times\mathcal H_m$, $\rho\ge0$, such that  
% \begin{enumerate}
% \item[(1)] the continuity equation holds with the prescribed initial and terminal densities;
% \item[(2)] the fundamental-diagram bound $\|m(t,x)\|\le\rho(t,x)\,v_0(t,x)\bigl(1-\rho(t,x)/\hat\rho(t,x)\bigr)$ is satisfied for every $(t,x)$.
% \end{enumerate}
% \end{assumptionprime}

%%%%%%%%%%%%%%%%%%%%%%%%%%%%%%%%%%%%%%%%%%%%%%%%%%%%%%%%%%%%%%%%%%%%%%%%%%%%%%%%%%%%%%%%%%%%%%%%%%%%%%%%
\section{Douglas–Rachford splitting method}\label{sec:primal}
We first start with a popular dynamic optimal transport solver, known as the Douglas-Rachford splitting method \cite{papadakis2014optimal, ryu2022large}. To cast Problem \ref{prob:1} within the framework of Douglas–Rachford splitting, we observe that it admits a natural decomposition into a sum of two convex terms. Let us introduce the variable $\bx:= (\rho, m)$, representing the density and momentum pair. With this notation, the constrained minimization problem in \eqref{eq:obj_compact} takes the form
\begin{equation*}
\min_{\bx} \; \iota_{\mathcal{C}}(\bx) + \iota_{\mathcal{F}}(\bx) + \mathcal{J}(\bx) = \iota_{\mathcal{C} \cap \mathcal{F}}(\bx) + \mathcal{J}(\bx),
\end{equation*}
where $\mathcal{J}(\bx)$ denotes the kinetic cost, and $\iota_{\mathcal{C} \cap \mathcal{F}}$ is the indicator function of feasible set, i.e.,
\[
\iota_{\mathcal{C} \cap \mathcal{F}}(\bx) :=
\begin{cases}
0 & \text{if } \bx \in \mathcal{C} \cap \mathcal{F}, \\
+\infty & \text{otherwise}.
\end{cases}
\]
This splitting identifies $g(\bx):= \iota_{\mathcal{C} \cap \mathcal{F}}(\bx)$ and $g(\bx):= \mathcal{J}(\bx)$, whereby the problem is within the scope of Douglas–Rachford iterations. 

To proceed, we now derive the proximal operator of $\mathcal{J}(\bx)$ -- a weighted least-squares update derived from the kinetic energy functional.

%These components enable the Douglas–Rachford splitting method through iterative updates that alternate between proximal steps and projections. To handle the constraint set $\mathcal{C} \cap \mathcal{F}$, we demonstrate the equivalence between alternating and parallel projection strategies.

\begin{proposition}[\bf Proximal operator of the kinetic objective]\label{prop:projj}
The functional $\mathcal{J}(\rho, m) = \|m\|^2 / (2\rho)$ admits a closed-form proximal operator and is therefore simple.
\end{proposition}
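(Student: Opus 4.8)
The plan is to exploit the pointwise (separable) structure of $\calJ$. Since the objective in \eqref{eq:obj} is an integral of the pointwise perspective function $(\rho,m)\mapsto \|m\|^2/(2\rho)$, which is jointly convex on $\{\rho>0\}$ and whose lower-semicontinuous extension takes the value $0$ at $(\rho,m)=(0,0)$ and $+\infty$ for $\rho=0,\ m\neq 0$ (and for $\rho<0$), the whole functional is CCP and the proximal minimization in \eqref{eq:prox-def} decouples grid point by grid point. Thus it suffices to solve, at a fixed node with input $(\rho_0,m_0)\in\R\times\R^{d}$, the finite-dimensional problem
\[
\min_{\rho\ge 0,\ m\in\R^d}\ \gamma\,\frac{\|m\|^2}{2\rho}+\tfrac12(\rho-\rho_0)^2+\tfrac12\|m-m_0\|^2 .
\]

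First I would minimize over $m$ for fixed $\rho>0$. The inner problem is a strictly convex quadratic whose stationarity condition $\tfrac{\gamma}{\rho}m+(m-m_0)=0$ gives the closed form $m^{*}=\tfrac{\rho}{\rho+\gamma}\,m_0$. Substituting $m^{*}$ back and simplifying collapses the two momentum terms into $\tfrac{\gamma\|m_0\|^2}{2(\rho+\gamma)}$, leaving the scalar problem $\min_{\rho\ge 0} h(\rho)$, where $h(\rho):=\tfrac{\gamma\|m_0\|^2}{2(\rho+\gamma)}+\tfrac12(\rho-\rho_0)^2$. The map $\rho\mapsto 1/(\rho+\gamma)$ is convex on $(-\gamma,\infty)$, so $h$ is strictly convex there; since $h'(\rho)\to-\infty$ as $\rho\to-\gamma^{+}$ and $h'(\rho)\to+\infty$ as $\rho\to+\infty$, there is exactly one stationary point. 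Its first-order condition $(\rho-\rho_0)=\tfrac{\gamma\|m_0\|^2}{2(\rho+\gamma)^2}$, after clearing the denominator and substituting $u=\rho+\gamma$, becomes the cubic
\[
u^{3}-(\rho_0+\gamma)\,u^{2}-\tfrac{\gamma\|m_0\|^2}{2}=0,
\]
which is solvable in closed form by Cardano's formula. Back-substitution through $m^{*}=\tfrac{\rho}{\rho+\gamma}m_0$ then yields an analytic proximal map, establishing that $\calJ$ is simple.

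I expect the main obstacle to be root selection together with the boundary constraint $\rho\ge 0$, rather than any heavy computation. Because the cubic may carry spurious roots, I would argue that the admissible minimizer is the unique real root with $u\ge\gamma$ (equivalently $\rho\ge0$), singled out by the strict convexity of $h$ on $(-\gamma,\infty)$ and the sign analysis of $h'$ above. The remaining care concerns the constraint $\rho\ge0$: evaluating $h'(0)=-\rho_0-\|m_0\|^2/(2\gamma)$ shows that whenever this quantity is positive—which can occur since the input to the proximal step inside \eqref{update-drs} need not be nonnegative—the constrained minimizer sits at the boundary, where $m^{*}=\tfrac{\rho}{\rho+\gamma}m_0$ forces $(\rho^{*},m^{*})=(0,0)$; otherwise the interior Cardano root is taken. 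Packaging these two cases produces the full closed-form proximal operator, and invoking the lower-semicontinuous extension at $(0,0)$ makes the degenerate case $\rho^{*}=0$ consistent with $\calJ$ being CCP.
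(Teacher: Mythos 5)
Your proposal is correct and follows essentially the same route as the paper's proof: eliminate the momentum via its stationarity condition $\tilde m = \tfrac{\tilde\rho}{\tilde\rho+\gamma}\,m_0$, reduce to a cubic in the density (your $u$-cubic is exactly the paper's equation \eqref{eq:drs-cubic} under the substitution $u=\tilde\rho+\alpha$), and solve it in closed form by Cardano's formula. If anything, your treatment is more careful than the paper's, since you justify root selection through strict convexity of the reduced scalar objective $h$ and give an explicit criterion ($h'(0)>0$) for when the constrained minimizer falls to the boundary point $(0,0)$, a case the paper dispatches with an unexplained ``otherwise'' clause.
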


\begin{proof}{Proof.}
The proximal operator \eqref{eq:prox-def} of $\alpha \mathcal{J}$ is given by
\begin{equation*}
\operatorname{Prox}_{\alpha \mathcal{J}}(\bx):= \argmin_{\tilde{\rho} > 0,\; \tilde{m}} \; \; \frac{1}{2} \|\rho - \tilde{\rho}\|^2 + \frac{1}{2} \|m - \tilde{m}\|^2 + \frac{\alpha}{2} \frac{\|\tilde{m}\|^2}{\tilde{\rho}},
\end{equation*}
with the first-order optimality condition is satisfied, it yields to 
\begin{subequations}
\begin{equation}\label{eq:first-kkt}
\frac{\partial \mathcal J_\alpha}{\partial \tilde{\rho}} =  \tilde{\rho} - \rho - \frac{\alpha}{2} \frac{\|\tilde{m}\|^2}{\tilde{\rho}^2} = 0    
\end{equation}
\begin{equation}\label{eq:second-kkt}
\frac{\partial \mathcal J_\alpha}{\partial \tilde{m}} =  \tilde{m} -m + \alpha \frac{\tilde{m}}{\tilde{\rho}} = 0    
\end{equation}
% \begin{align}\label{eq:first-kkt}
% \frac{\partial \mathcal J_\alpha}{\partial \tilde{\rho}} &=  \tilde{\rho} - \rho - \frac{\alpha}{2} \frac{\|\tilde{m}\|^2}{\tilde{\rho}^2} = 0\\
% \frac{\partial \mathcal J_\alpha}{\partial \tilde{m}} &=  \tilde{m} -m + \alpha \frac{\tilde{m}}{\tilde{\rho}} = 0.\label{eq:second-kkt}
% \end{align}    
\end{subequations}
and solving \eqref{eq:second-kkt}, we obtain the closed-form solution of $\tilde{m}$ so that
$\displaystyle \tilde{m} = \frac{\tilde{\rho}}{\alpha + \tilde{\rho}} m$, then by substituting $\tilde{m}$ back to \eqref{eq:first-kkt}, we arrive at a cubic equation
\begin{equation}\label{eq:drs-cubic}
(\tilde{\rho} - \rho)(\alpha + \tilde{\rho})^2 = \frac{\alpha}{2} \|m\|^2.
\end{equation}
The solution of \eqref{eq:drs-cubic}, $\rho^* > 0$, is it's largest real root and the proximal update is 
\begin{equation}\label{eq:proj_obj}
\operatorname{Prox}_{\alpha \mathcal{J}}(\rho, m) :=
\begin{cases}
\left(\rho^*, \; \displaystyle \frac{\rho^* m}{\alpha + \rho^*} \right) & \text{if } \rho^* > 0, \\
(0, 0) & \text{otherwise}.
\end{cases}
\end{equation}
The case $(\rho, m) = (0, 0)$ is consistent with the domain and definition of $\mathcal{J}$. Moreover, the cubic \eqref{eq:drs-cubic} can be rewritten in the standard form $\tilde{\rho}^3 + b \tilde{\rho}^2 + c \tilde{\rho} + d = 0$, with coefficients $a = 1$, $b = 2\alpha - 2\rho$,  $c = \alpha^2 - 4\rho\alpha$, and $\displaystyle d = -\frac{\alpha}{2} \|m\|^2$ - $\rho\alpha^2$, and $\rho^*$ can be obtained efficiently by Newton's method, or even explicitly using Cardano’s formula.
% \footnote{While Cardano’s formula admit closed-form solution, it is known to be numerically unstable when the discriminant is close to zero or in cases of subtractive cancellation.}.
\end{proof}

\begin{remark}[\bf Proximal operator with obstacle constraints]
As in Remark~\ref{remark:obs} and \cite[Sec.~5.1]{papadakis2014optimal}, the obstacle set $\Omega_{\mathrm{obs}}=\Omega\setminus\Omega_{\mathrm{free}}$ is imposed through the indicator $\iota_{\Omega_{\mathrm{free}}}$.  The corresponding proximal map thus acts pointwise
\[
\operatorname{Prox}_{\tau\mathcal J_{\mathrm{obs}}}(\rho,m)(t,x)=
\begin{cases}
\operatorname{Prox}_{\tau\mathcal J}(\rho,m)(t,x), & (t,x)\in\Omega_{\mathrm{free}},\\
(0,0), & (t,x)\in\Omega_{\mathrm{obs}}.
\end{cases}
\]
\end{remark}
% \begin{remark}[\bf Proximal operator with obstacle constraints]
% Obstacle constraints can be incorporated into the objective as noted in Remark \ref{remark:obs} and \cite[Section 5.1]{papadakis2014optimal}, whose corresponding proximal operator decomposes into two steps. First, compute the proximal update for the unconstrained cost $\mathcal{J}$. Then, project the result onto the feasible region $\Omega_{\mathrm{free}}$, i.e., 
% \[
% \operatorname{Prox}_{\tau \mathcal{J}_{\mathrm{obs}}}(\rho, m) = \operatorname{Proj}_{\Omega_{\mathrm{free}}} \left( \operatorname{Prox}_{\tau \mathcal{J}}(\rho, m) \right).
% \]
% \end{remark}

We are now in the position to describe the projection operators onto the individual constraint sets $\mathcal{C}$ and $\mathcal{F}$, which enforce the continuity equation and the fundamental diagram constraint, respectively. The following two propositions characterize the least-squares (Euclidean) projections onto $\mathcal{C}$ and $\mathcal{F}$. First, recall that
\begin{equation*}
\mathcal{C} = \left\{ (\rho, m) \;\middle|\; \partial_t \rho + \nabla_x \cdot m = 0,\; \rho(0,\cdot) = \mu,\; \rho(1,\cdot) = \nu \right\} \ \left( = \left\{ \bx \;\middle|\; \bK \bx = y \right\}\right),
\end{equation*}
with $\bK:= (\partial_t, \nabla_x)$.%and define the linear operator $\bK:= (\partial_t, \nabla_x)$ and $y$ for the boundaries, we rewrite $\mathcal{C} := \left\{ \bx \;\middle|\; \bK \bx = y \right\}$.  
To compute the projection onto the continuity constraint set $\mathcal{C}$, we first derive the adjoint of $\bK$.

\begin{lemma}[\bf Adjoint operator $\bK^*$]
The (discretized) adjoint operator $\bK^*$ corresponding to $\bK:= (\partial_t, \nabla_x)$ is given by the negative space–time divergence, that is,
\begin{equation*}
\bK^*:= \begin{pmatrix} -\partial_t,-\nabla_x \end{pmatrix}^\top.
\end{equation*}
\end{lemma}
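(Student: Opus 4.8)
The plan is to verify the defining adjoint identity $\langle \bK\bx,\phi\rangle = \langle \bx, \bK^*\phi\rangle$ directly, where the left-hand pairing is the weighted Euclidean inner product on the centered grid (the codomain $\mathcal H_\rho$) and the right-hand pairing is the product inner product on $\mathcal H_\rho\times\mathcal H_m$, both weighted by the space--time volume elements as specified in Remark~\ref{rem:regularity}. Writing $\bK\bx = \partial_t\rho + \nabla_x\cdot m$ as in \eqref{eq:div-discrete}, I would expand $\langle \bK\bx,\phi\rangle$ and split it into a temporal piece stemming from the difference $(\rho_{k+\frac12,\mathbf i}-\rho_{k-\frac12,\mathbf i})/\Delta t$ and a spatial piece stemming from $(m^{(\ell)}_{k,\mathbf i+\frac12}-m^{(\ell)}_{k,\mathbf i-\frac12})/\Delta x_\ell$, treating each by discrete summation by parts.

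For the spatial piece, summation by parts in each coordinate $x_\ell$ transfers the forward difference from $m$ onto $\phi$, producing $-\sum_{k,\mathbf i} m^{(\ell)}_{k,\mathbf i+\frac12}(\phi_{k,\mathbf i+1}-\phi_{k,\mathbf i})/\Delta x_\ell$; that is, $m$ paired against the negative discrete spatial gradient $-\nabla_x\phi$. Summation by parts in the time index likewise moves the difference on $\rho$ onto $\phi$ and yields $\rho$ paired against $-\partial_t\phi$. Collecting the two contributions gives $\langle(\rho,m),(-\partial_t\phi,-\nabla_x\phi)\rangle$, from which $\bK^* = (-\partial_t,-\nabla_x)^\top$ can be read off. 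The continuous statement is the exact analogue with integration by parts in place of summation by parts.

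The step requiring the most care is the accounting of the boundary terms generated by each summation by parts, together with the matching of the volume weights across the staggered and centered placements. In the spatial directions, the no-flux (zero normal momentum) boundary condition built into the staggered momentum variables forces the spatial boundary contributions to vanish. In time, the endpoint terms involve the prescribed marginals $\rho_{-\frac12,\mathbf i}=\mu_{\mathbf i}$ and $\rho_{P+\frac12,\mathbf i}=\nu_{\mathbf i}$; since $\bK$ is the linear operator and the fixed marginals enter only through the affine offset $y$ in $\mathcal C=\{\bx : \bK\bx = y\}$, these terms are consistent with that affine structure and do not alter the linear adjoint. Once I confirm that the weights of the two inner products agree so that no residual scaling survives the index shifts, the identification $\bK^* = (-\partial_t,-\nabla_x)^\top$ follows, completing the proof.
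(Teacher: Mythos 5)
Your proposal is correct, and it rests on the same core idea as the paper's proof: verify the defining identity $\langle \phi, \bK\bx\rangle = \langle \bK^*\phi, \bx\rangle$ by moving the derivative from $(\rho,m)$ onto $\phi$. The difference is the level at which you execute it. The paper carries out the computation in the continuous setting -- integration by parts of $\int\!\!\int \phi\,(\partial_t\rho + \nabla_x\cdot m)\,dx\,dt$ with boundary terms silently discarded -- and then simply asserts that the result is ``consistent with'' the discretized operator of \eqref{eq:div-discrete}. You instead work directly on the staggered grid with discrete summation by parts, which is arguably more faithful to the lemma as stated (it explicitly concerns the \emph{discretized} adjoint), and you make explicit two points the paper leaves implicit: that the spatial boundary terms vanish because of the zero-normal-momentum condition built into the staggered momentum variables, and that the temporal endpoint terms involving the prescribed marginals $\mu,\nu$ belong to the affine offset $y$ in $\mathcal C=\{\bx : \bK\bx = y\}$ rather than to the linear operator itself, so they do not contaminate the adjoint. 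Your route thus buys a rigorous discrete statement (including the weight-matching between the inner products of Remark~\ref{rem:regularity}), at the cost of more index bookkeeping; the paper's route is shorter but proves the continuous analogue and only gestures at the discrete claim.
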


\begin{proof}{Proof.}
Recall $\bx = (\rho, m)$ denotes a space–time vector field, and let $\phi(t,x)$ be a scalar-valued test function (dual variable), and linear operator $\bK$ acts on $\bx$ as
$\bK \bx = \partial_t \rho + \nabla_x \cdot m$. We seek the adjoint operator $\bK^*$ such that $\langle \phi, \bK \bx \rangle = \langle \bK^* \phi, \bx \rangle$ (see also \cite[Page 44]{brezis2011functional}), wherein left-hand side is computed as 
\begin{equation*}
\langle \phi, \bK \bx \rangle 
= \int_0^{t_f} \int_{\mathbb{R}^d} \phi(t,x) \Big( \partial_t \rho(t,x) + \nabla_x \cdot m(t,x) \Big) dx\,dt.
\end{equation*}
Integrating each term by parts, we have
\begin{equation*}
\int_0^{t_f} \int_{\mathbb{R}^d} \Big[ \phi \cdot \partial_t \rho
+ \phi \cdot \nabla_x \cdot m \Big] \, dx\,dt = \int_0^{t_f} \int_{\mathbb{R}^d} \Big[ - \partial_t \phi \cdot \rho  -\nabla_x \phi \cdot m \Big] \, dx\,dt,
\end{equation*}
so that
\begin{equation*}
\langle \phi, \bK \bx \rangle 
= -\int_0^{t_f} \int_{\mathbb{R}^d} \left( \partial_t \phi \cdot \rho + \nabla_x \phi \cdot m \right) dx\,dt 
= \int_0^{t_f} \int_{\mathbb{R}^d} \left( -\partial_t \phi, -\nabla_x \phi \right) \cdot (\rho, m) \, dx\,dt.
\end{equation*}
Therefore, the adjoint operator of $\bK$ reads $\bK^* \phi = \begin{pmatrix} -\partial_t \phi,-\nabla_x \phi \end{pmatrix}$, that also consistent with the discretized operators introduced in \eqref{eq:div-discrete}.
\end{proof}

With the adjoint operator $\bK^*$, we can now derive the projection onto the constraint set $\mathcal{C}$ by solving a least-squares problem subject to the continuity constraint.
\begin{proposition}[\bf Proximal operator of $\calC$]
The proximal operator of $\iota_\calC$ where $\calC=\{\bx\mid \bK \bx=y\}$ is the Euclidean-norm projection, i.e.,
\begin{equation}
\operatorname{Proj}_{\mathcal{C}}(\bx) = \left( \mathrm{Id} - \bK^* (\bK \bK^*)^{-1} \bK \right) \bx + \bK^* (\bK \bK^*)^{-1} y.   
\end{equation}
\end{proposition}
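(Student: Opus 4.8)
The plan is to recognize first that the proximal operator of an indicator function collapses to the Euclidean projection onto the underlying set, and then to solve the resulting equality-constrained least-squares problem in closed form via its Lagrangian optimality conditions. By Definition~\ref{def:proximal}, for any $\gamma>0$,
\[
\operatorname{Prox}_{\gamma\iota_\calC}(\bx)=\argmin_{\tilde{\bx}}\Big(\gamma\,\iota_\calC(\tilde{\bx})+\tfrac12\|\tilde{\bx}-\bx\|^2\Big)=\argmin_{\tilde{\bx}\in\calC}\tfrac12\|\tilde{\bx}-\bx\|^2=\operatorname{Proj}_{\calC}(\bx),
\]
so the prox is independent of $\gamma$ and coincides with the metric projection. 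Since $\calC=\{\tilde{\bx}\mid \bK\tilde{\bx}=y\}$ is a closed affine subspace, nonempty under Assumption~\ref{assumption:non-empty}, the objective is strictly convex and coercive and the minimizer exists and is unique, so it suffices to characterize it.

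Next I would introduce a scalar space--time multiplier $\phi$ (the same dual variable used later in the Chambolle--Pock formulation) and form the Lagrangian
\[
L(\tilde{\bx},\phi)=\tfrac12\|\tilde{\bx}-\bx\|^2+\langle\phi,\,\bK\tilde{\bx}-y\rangle.
\]
Setting the gradient in $\tilde{\bx}$ to zero yields the stationarity condition $\tilde{\bx}-\bx+\bK^*\phi=0$, i.e. $\tilde{\bx}=\bx-\bK^*\phi$. Imposing primal feasibility $\bK\tilde{\bx}=y$ then gives the normal equation $\bK\bK^*\phi=\bK\bx-y$; solving for $\phi=(\bK\bK^*)^{-1}(\bK\bx-y)$ and substituting back produces $\tilde{\bx}=\bx-\bK^*(\bK\bK^*)^{-1}(\bK\bx-y)$, which rearranges exactly into the claimed expression $(\mathrm{Id}-\bK^*(\bK\bK^*)^{-1}\bK)\bx+\bK^*(\bK\bK^*)^{-1}y$. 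Because the problem is a strictly convex quadratic with a single affine constraint, these first-order conditions are both necessary and sufficient for the global optimum, so no further verification of optimality is needed.

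The main obstacle is the invertibility of $\bK\bK^*$, which is exactly the surjectivity (full row rank) of the discrete space--time divergence $\bK$ onto its target space $\mathcal H_\rho$. Concretely, $\bK\bK^*=-\partial_{tt}-\Delta_x$ is a space--time Laplacian subject to the no-flux (zero normal momentum) boundary conditions together with the prescribed marginals; under Neumann-type boundary conditions this operator annihilates the constant mode, so strictly speaking it is only invertible on the orthogonal complement of the constants. I would therefore argue that the relevant right-hand side $\bK\bx-y$ lies in $\Range(\bK)=\Range(\bK\bK^*)$ thanks to the mass-compatibility of the marginals (both $\mu$ and $\nu$ integrate to one, so the discrete divergence is consistent), and that $(\bK\bK^*)^{-1}$ is to be read as the solution operator of the associated Poisson system on that range (equivalently, the Moore--Penrose pseudoinverse). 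In the staggered-grid discretization of Remark~\ref{rem:regularity} this linear solve is carried out efficiently by fast cosine/Fourier transforms with the zero-mean mode handled separately, exactly as in the construction adopted from \cite{papadakis2014optimal}; I would state this explicitly so that the formula is well defined even though $\bK\bK^*$ has a nontrivial kernel on the full space.
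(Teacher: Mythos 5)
Your proof is correct and follows essentially the same route as the paper's: the proximal operator of an indicator collapses to the metric projection, and the closed form comes from the Lagrangian stationarity condition $\tilde{\bx}=\bx-\bK^*\phi$ together with the normal equation $\bK\bK^*\phi=\bK\bx-y$, which the paper likewise presents (just after the proposition) as a space--time Poisson solve with $\phi$ the Lagrange multiplier. You are in fact more careful than the paper on one point: the paper merely asserts that $\bK\bK^*$ ``is invertible under boundary conditions,'' whereas you correctly flag the constant-function kernel of the Neumann space--time Laplacian and repair the statement by noting the mass-compatibility of the marginals places $\bK\bx-y$ in $\Range(\bK\bK^*)$, so $(\bK\bK^*)^{-1}$ is to be read as the solution operator (pseudoinverse) on that range.
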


\begin{proof}{Proof.}
First note that the proximal operator of an indicator function corresponds to the projection on the corresponding set $\operatorname{prox}{\iota_{\mathcal C}}(\bx) =
\argmin_{\tilde{\bx}\in\mathcal C}\frac12\|\bx-\tilde{\bx}\|^2
= \operatorname{Proj}_{\mathcal C}(\bx)$, so we only need the orthogonal projection onto set $\calC=\{\bx\mid \bK\bx=y\}$, where $\bK\bK^{*}$ is invertible under boundary conditions.
% First note that the proximal operator of an indicator function corresponds to the projection on the corresponding set. Then the expression follows since it is a projection onto the set $\calC=\{\bx\mid \bK\bx=y\}$ where $\bK\bK^*$ is invertible.
% The proximal operation of the indication function for linear continuity constraint $\bK\bx = y$, it alters to a projection operator defined as 
% \begin{equation*}
% \operatorname{Proj}_{\mathcal{C}}(\bx) := \argmin_{\tilde{\bx} \in \mathcal{C}} \; \frac{1}{2} \|\bx - \tilde{\bx}\|^2.
% \end{equation*}
% The associated Lagrangian for the optimization problem with dual variable $\phi$ is
% \begin{equation*}
% \mathcal{L}(\tilde{\bx}, \phi) = \frac{1}{2} \|\bx - \tx\|^2 + \langle \phi, A\tilde{\bx} - y \rangle,
% \end{equation*}
% then setting the first-order condition in $\tilde{\bx}$ gives $\tx = \bx - \bK^* \phi$, then substituting $\tx$ into the linear constraint $\bK\tx=y$, we arrive at $\phi = (\bK \bK^*)^{-1}(\bK \bx - y)$. The projection operator then takes the form
% \begin{equation*}
% \operatorname{Proj}_{\mathcal{C}}(\bx):= \tx = \left(\mathrm{Id} - \bK^* (\bK \bK^*)^{-1} \bK \right)\bx + \bK^* (\bK \bK^*)^{-1} y, 
% \end{equation*}
% and 
\end{proof}

Computing the projection requires solving a space–time Poisson equation, i.e., $\bK \bK^* \phi = \bK \bx - y$, wherein $\bK \bK^*$ is the space–time Laplacian, defined as $\Delta_{t,x} \phi := \partial_t^2 \phi + \Delta_x \phi$, with homogeneous Neumann boundary conditions.%
\footnote{The space–time Laplacian is defined as $\Delta_{t,x} \phi := \partial_t^2 \phi + \Delta_x \phi$, with homogeneous Neumann boundary conditions \cite{papadakis2014optimal}. We impose $\partial_t \phi(0,x) = \partial_t \phi(t_f,x) = 0$, which corresponds to fixed boundary densities $\rho(0,x) = \mu(x)$ and $\rho(t_f,x) = \nu(x)$. In space, the condition $\nabla_x \phi(t,x) \cdot n = -\tilde{m}(t,x) \cdot n$ is enforced on $\partial \Omega$, representing a no-flux condition for the momentum variable $m$.} 
% \purple
% The operator $\bK \bK^*$ is a discrete space–time Laplacian. Solving the equation
% \begin{equation}\label{eq:poisson}
% \bK \bK^* \phi = \bK \bx - y
% \end{equation}
% corresponds to solving a space–time Poisson equation with homogeneous Neumann boundary conditions.
% \footnote{The discrete operator $\bK \bK^*$ represents the space–time Laplacian $\Delta_{t,x} := \partial_t^2 + \Delta_x$. In time, we impose Neumann boundary conditions $\partial_t \phi(0,x) = \partial_t \phi(t_f,x) = 0$, corresponding to fixed boundary densities. In space, $\nabla_x \phi(t,x) \cdot n = -\tilde{m}(t,x) \cdot n$ on $\partial \Omega$ enforces a no-flux condition on the momentum. See \cite{papadakis2014optimal} for details.}
% \black 
Therefore, solving the linear system amounts to solving a space–time Poisson equation \cite{evans2022partial} with source term $\bK \bx-y$. The scalar field $\phi$ represents the optimal Lagrange multiplier associated with the continuity constraint, and the projection $\tilde{\bx}$ is then recovered explicitly via $\tilde{\bx} = \bx - \bK^* \phi$.

We then consider the projection onto the constraint set defined by the fundamental diagram \eqref{eq:fdconstraint}.  
\begin{proposition}[\bf Proximal operator of $\calF$]
The proximal operator of the indicator function $\iota_{\mathcal{F}}$ in the Euclidean norm is given by
\begin{equation}\label{eq:projF}
\operatorname{Proj}_{\mathcal{F}}(\rho, m)=
\begin{cases}
(\tilde{\rho},\tilde{m}) & \text{if } \displaystyle \tilde{m} \leq \tilde \rho v_0 \left( 1 - \frac{\tilde \rho}{\hat\rho} \right), \\[6pt]
\Big(\rho^*, \; \displaystyle \rho^* v_0 \big(1 - \rho^*/\hat\rho \big) \Big) & \text{otherwise},
\end{cases}
\end{equation}
where $\rho^*$ is the positive root of the cubic polynomial
\begin{equation*}
\frac{2v_0^2}{\hat{\rho}^2} \tilde{\rho}^3 - \frac{3v_0^2}{\hat{\rho}} \tilde{\rho}^2 + \left(v_0^2 + \frac{2m}{\hat{\rho}}v_0 + 1\right)\tilde{\rho} - (\rho + m v_0) = 0.
\end{equation*}
and admit a closed-form solution.
\end{proposition}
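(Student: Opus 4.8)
The plan is to exploit the fact that the constraint set $\mathcal{F}$ is defined \emph{pointwise} in space–time, so that the $L^2$ projection decouples into independent finite-dimensional projections, one at each grid point. I would therefore fix a single point and solve
\[
\min_{\tilde\rho,\tilde m}\ \tfrac12(\rho-\tilde\rho)^2+\tfrac12\|m-\tilde m\|^2\quad\text{subject to}\quad \|\tilde m\|\le \mathcal{Q}(\tilde\rho),
\]
where $\mathcal{Q}(\tilde\rho)=v_0\tilde\rho(1-\tilde\rho/\hat\rho)$. First I would record that $\mathcal{F}$ is convex, since the constraint function $\|m\|-\mathcal{Q}(\rho)=\|m\|-v_0\rho+\tfrac{v_0}{\hat\rho}\rho^2$ is a sum of a convex function of $m$ and a convex function of $\rho$; hence the projection is single-valued. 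If the input already satisfies $\|m\|\le\mathcal{Q}(\rho)$ the minimizer is the point itself, which gives the first branch of \eqref{eq:projF}.

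For the infeasible case, the key reduction is a symmetry argument in the momentum variable. For any fixed radius $r=\|\tilde m\|$, the term $\|m-\tilde m\|^2=\|m\|^2-2\langle m,\tilde m\rangle+r^2$ is minimized by aligning $\tilde m$ with $m$, i.e.\ $\tilde m=r\,m/\|m\|$, yielding $\|m-\tilde m\|^2=(\|m\|-r)^2$ (the degenerate case $m=0$ is treated separately by taking $\tilde m=0$ and clipping $\tilde\rho$ to $[0,\hat\rho]$). This collapses the $(d+1)$-dimensional problem to the two scalar unknowns $(\tilde\rho,r)$. I would then invoke the standard fact that the projection of an exterior point onto a closed convex set lies on its boundary, so the constraint is active and $r=\mathcal{Q}(\tilde\rho)$, leaving only the scalar unknown $\tilde\rho$.

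Substituting $r=\mathcal{Q}(\tilde\rho)$ produces the quartic objective $\phi(\tilde\rho)=\tfrac12(\rho-\tilde\rho)^2+\tfrac12(\|m\|-\mathcal{Q}(\tilde\rho))^2$ on $\tilde\rho\in[0,\hat\rho]$. Setting $\phi'(\tilde\rho)=(\tilde\rho-\rho)+(\mathcal{Q}(\tilde\rho)-\|m\|)\,\mathcal{Q}'(\tilde\rho)=0$ and expanding with $\mathcal{Q}'(\tilde\rho)=v_0(1-2\tilde\rho/\hat\rho)$ reorganizes into the stated cubic, once the scalar $\|m\|$ is identified with the symbol $m$ appearing in its coefficients; matching powers gives leading term $\tfrac{2v_0^2}{\hat\rho^2}$, then $-\tfrac{3v_0^2}{\hat\rho}$, then $v_0^2+\tfrac{2\|m\|}{\hat\rho}v_0+1$, and finally $-(\rho+\|m\|v_0)$, exactly as written. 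I would then recover $\tilde m=\mathcal{Q}(\rho^*)\,m/\|m\|$, which matches the second branch of \eqref{eq:projF}, and note that Cardano's formula (or a guarded Newton iteration) supplies the root in closed form.

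The step I expect to be the main obstacle is the correct selection of the root $\rho^*$: the cubic $\phi'=0$ may possess up to three real roots, and I must argue that the relevant one is the unique global minimizer of $\phi$ on $[0,\hat\rho]$. I would address this by observing that $\phi$ is a quartic with positive leading coefficient, hence coercive, so a global minimizer exists; and by using the geometry of the concave cap $\mathcal{Q}$ to show the minimizer lies in the interior $(0,\hat\rho)$ and is the appropriate (positive, admissible) real root, so that the label ``positive root'' in \eqref{eq:projF} is unambiguous. Ruling out spurious critical points lying outside $[0,\hat\rho]$, or corresponding to a local maximum rather than the minimizer, is the delicate bookkeeping that makes this the crux of the argument.
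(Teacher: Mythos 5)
Your proposal is correct and follows essentially the same route as the paper: feasible points are fixed, infeasible points are projected onto the active boundary $\tilde m = \mathcal{Q}(\tilde\rho)$, and the first-order condition of the resulting one-dimensional problem over $\tilde\rho \in [0,\hat\rho]$ yields exactly the stated cubic. Your treatment is in fact more careful than the paper's at two points it glosses over — the alignment reduction $\tilde m = r\, m/\|m\|$ needed for vector-valued momentum, and the selection of the correct root of the cubic, which the paper simply asserts to be ``the unique positive root'' without ruling out spurious critical points.
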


\begin{proof}{Proof.}
First, given a density–momentum pair $\tx = (\tilde \rho, \tilde m)$, if the fundamental diagram constraint is satisfied, i.e., $\tilde m \leq \tilde{\rho} v_0 \left(1 - \frac{\tilde \rho}{\hat\rho} \right)$, then the point is already feasible, and no projection is needed. Otherwise, the proximal operator is computed as the projection onto $\mathcal{F}$ with respect to the Euclidean norm as
\begin{equation*}
\operatorname{Proj}_{\mathcal{F}}(\rho,m) := \argmin_{(\tilde{\rho}, \tilde{m}) \in \mathcal{F}} \; \frac{1}{2} \|\rho - \tilde{\rho}\|^2 + \frac{1}{2} \|m - \tilde{m}\|^2.
\end{equation*}    
In this case, the solution lies on the boundary of the constraint, so that
\begin{equation*}
\tilde m = \tilde \rho v_0 \left(1 - \tilde \rho/\hat\rho \right),
\end{equation*}
and the projection reduces to solving a one-dimensional optimization problem over $\tilde \rho$ so that
\begin{equation*}
\rho^* := \argmin_{\tilde \rho \in [0, \hat\rho]} \;\; \frac{1}{2} \|\rho - \tilde{\rho}\|^2 + \frac{1}{2} \|m - \tilde \rho v_0 \left(1 - \tilde \rho/\hat\rho \right)\|^2.
\end{equation*}
and $\rho^*$ can be seen as the unique positive root of the cubic polynomial by applying the first-order optimality condition
\begin{equation*}
\frac{2v_0^2}{\hat{\rho}^2} \tilde{\rho}^3 - \frac{3v_0^2}{\hat{\rho}} \tilde{\rho}^2 + (v_0^2+\frac{2m}{\hat{\rho}}v_0+1)\tilde{\rho} - (\rho+mv_0) = 0,    
\end{equation*}
that leads to a cubic polynomial in $\tilde\rho$, identical in structure to the one derived for the proximal operator of $\mathcal{J}$ in Proposition~\ref{prop:projj}. 
\end{proof}

\begin{corollary}[\bf Generalization to $\beta$-family and triangular fundamental diagram]
Projection onto the fundamental diagram constraint set $\mathcal{F}$ extends naturally to triangular and $\beta$-family fundamental diagrams \eqref{eq:beta-fd}. The constraint can be seen as in the form $\mathcal{F} := \left\{ (\rho, m) \;\middle|\; m \leq \mathcal{Q}^{\beta}(\rho)\right\}$, where $\mathcal{Q}^\beta(\rho)$ is a regime-dependent flux profile that reflects changes in traffic behavior across free-flow and congested phases. The corresponding proximal operators are computed by solving a constrained (Euclidean) projection
\begin{equation*}
\operatorname{Proj}_{\mathcal{F}}(\rho, m) := \argmin_{\tilde{\rho} \in [0, \hat\rho]} \;\; \frac{1}{2} \|\rho - \tilde\rho\|^2 + \frac{1}{2} \left\|m - \mathcal{Q}^\beta(\tilde\rho)\right\|^2,
\end{equation*}
whose corresponding projection operator remains simple.
\end{corollary}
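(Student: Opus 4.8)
The plan is to mirror the argument used for the Greenshields projection in the preceding proposition and to show that the only new ingredient, the regime-dependent flux profile $\mathcal{Q}^\beta$, preserves both the convexity of $\calF$ and the reduction to a scalar root-finding problem. First I would record the structural fact that every flux in the $\beta$-family \eqref{eq:beta-fd}, as well as the triangular flux, is concave on $[0,\hat\rho]$ (the defining property of a fundamental diagram, exactly the concave scalar flux recalled in Section~\ref{sec:Intro}). Concavity of $\mathcal{Q}^\beta$ makes the sublevel set $\calF=\{(\rho,m)\mid \|m\|\le \mathcal{Q}^\beta(\rho)\}$ convex, so the Euclidean projection onto $\calF$ is well defined and unique; this is the fact I will ultimately invoke to certify global optimality.

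Next I would carry out the two-stage reduction. Fixing $\tilde\rho$, the inner minimization over $\tilde m$ is the projection of $m$ onto the Euclidean ball of radius $\mathcal{Q}^\beta(\tilde\rho)$, so the optimal $\tilde m$ is radial: when the point is infeasible the constraint is active, $\tilde m=\mathcal{Q}^\beta(\tilde\rho)\,m/\|m\|$, and $\|m-\tilde m\|=\|m\|-\mathcal{Q}^\beta(\tilde\rho)$. This collapses the vector projection to the scalar program
\begin{equation*}
\rho^* = \argmin_{\tilde\rho\in[0,\hat\rho]} \tfrac12(\rho-\tilde\rho)^2 + \tfrac12\bigl(\|m\|-\mathcal{Q}^\beta(\tilde\rho)\bigr)^2,
\end{equation*}
which is precisely the stated form with $m$ replaced by $\|m\|$. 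The feasibility test $\|m\|\le \mathcal{Q}^\beta(\rho)$ disposes of the interior case, returning $(\rho,m)$ unchanged.

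I would then treat the branches separately using the explicit form of $\mathcal{Q}^\beta$. On the free-flow branch $\tilde\rho\in[0,\rho_c]$ the flux is the quadratic $\tilde\rho v_0(1-\alpha\tilde\rho)$, so the first-order condition is a cubic in $\tilde\rho$, structurally identical to \eqref{eq:drs-cubic}, hence solvable in closed form by Cardano's formula. For the triangular diagram both branches are affine, so each stationary condition is linear and projection onto each segment is elementary; for Smulders ($\alpha=\beta=1$) the congested branch $\gamma(1-\tilde\rho/\hat\rho)$ is likewise affine, again giving a closed form. For a general exponent the congested branch $\gamma\,\tilde\rho(1/\tilde\rho-1/\hat\rho)^\beta$ is smooth but no longer polynomial, so the stationarity equation is transcendental; here I would argue that the scalar objective is continuously differentiable on the compact interval $[\rho_c,\hat\rho]$ and that its minimizer is located by a one-dimensional Newton or bisection search, which is exactly what ``simple'' means computationally.

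The main obstacle I anticipate is the global-optimality bookkeeping. Because the scalar parametrization of the boundary need not itself be convex in $\tilde\rho$, I must compare the regime-wise stationary points together with the two endpoints and the kink at $\rho_c$, and then invoke the concavity of $\mathcal{Q}^\beta$, equivalently the convexity and uniqueness of the projection onto $\calF$ established at the outset, to certify that the candidate achieving the smallest objective value is indeed the true projection. The remaining care is verifying that the active-constraint (radial) configuration is the one selected for infeasible inputs, so that substituting $\|m-\tilde m\|=\|m\|-\mathcal{Q}^\beta(\tilde\rho)$ is valid at the optimum; this follows from the positivity of the KKT multiplier on the active boundary of the convex set $\calF$.
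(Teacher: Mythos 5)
Your route mirrors what the paper intends: the corollary carries no proof of its own and is meant to follow by rerunning the preceding Greenshields proposition with $\mathcal{Q}^\beta$ in place of $\mathcal{Q}$ --- feasibility test, activity of the constraint at the optimum, radial reduction of the momentum step, then a branch-wise scalar stationarity equation solved in closed form on the polynomial/affine branches and by Newton or bisection otherwise. That is exactly what you do, and the two-stage reduction and enumeration over stationary points, endpoints, and the kink at $\rho_c$ are sound.

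There is, however, one genuine flaw: the ``structural fact'' you place at the outset --- concavity of every flux in the $\beta$-family --- is false for part of the family, and it is precisely the fact you invoke for uniqueness and for certifying global optimality. On the congested branch a direct computation gives
\begin{equation*}
\frac{d^{2}}{d\tilde\rho^{2}}\Bigl[\gamma\,\tilde\rho\bigl(\tfrac{1}{\tilde\rho}-\tfrac{1}{\hat\rho}\bigr)^{\beta}\Bigr]
=\frac{\gamma\,\beta(\beta-1)}{\tilde\rho^{3}}\Bigl(\frac{1}{\tilde\rho}-\frac{1}{\hat\rho}\Bigr)^{\beta-2},
\end{equation*}
which is strictly positive whenever $\beta>1$ --- a regime the paper explicitly allows, since in De Romph's model $\beta$ is a tunable positive parameter. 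In that case the congested branch is convex, $\calF$ is a non-convex set, and both the uniqueness of the Euclidean projection and your convexity-based certification (and the KKT-multiplier argument, which also presumes convexity of $\calF$) collapse. What survives without concavity: the scalar reduction is still valid, because for fixed $\tilde\rho$ the inner problem is a projection onto a ball (hence radial), and the projection of an exterior point onto the closed set $\calF$ still lies on the graph $\|m\|=\mathcal{Q}^\beta(\rho)$, so activity holds; moreover, your enumeration still locates a global minimizer of the continuous scalar objective on the compact interval $[0,\hat\rho]$, i.e., a valid (possibly non-unique) projection. The honest fix is either to restrict the claim to $\beta\le 1$ --- where your argument is complete, since the free-flow branch is concave quadratic, the congested branch is concave, and the kink at the flux maximum preserves overall concavity --- or to state the conclusion as solving a one-dimensional, possibly non-convex problem, noting that in the non-convex regime the convergence guarantees of the DRS and Chambolle--Pock schemes (which require $\calF$ convex) are also no longer covered. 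The paper itself glosses over this point, so the restriction deserves to be made explicit.
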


As established in Problem~\ref{prob:1}, a solution exists if and only if the intersection $\mathcal{C} \cap \mathcal{F}$ is non-empty. In the context of Douglas–Rachford splitting, the update $\operatorname{Prox}_{\alpha g}$ in \eqref{update-drs} corresponds to the Euclidean projection onto the constraint set, where $g(\bx):= \iota_{\mathcal{C}}(\bx) + \iota_{\mathcal{F}}(\bx) = \iota_{\mathcal{C} \cap \mathcal{F}}(\bx)$ is the indicator function of the feasible region. This reduces to a classical convex feasibility problem
\begin{equation}\label{eq:proj-inter}
\text{Find } \bx^* \in \mathcal{C} \cap \mathcal{F},
\end{equation}
given an initial point $\tx = (\tilde{\rho}, \tilde{m})$, compute the closest point $\bx^*$ in the intersection with respect to the Euclidean norm.

Recall that Douglas–Rachford splitting (DRS) applied to the composite minimization problem
\begin{equation}\label{eq:drs-1}
\min_{\bx} \;\iota_{\mathcal{C}\cap \mathcal F}(\bx) + \mathcal{J}(\bx)  
\end{equation}
that yields to standard DRS updating scheme
\begin{equation}\label{eq:drs_fd}
\bx^{k+1/2} = \operatorname{Prox}_{\alpha \mathcal{J}}(\bz^k), \quad
\bx^{k+1} = \operatorname{Proj}_{\mathcal C \cap \mathcal{F}}(2\bx^{k+1/2} - \bz^k), \quad
\bz^{k+1} = \bz^k + \bx^{k+1} - \bx^{k+1/2},
\end{equation}
where $\alpha > 0$ is a fixed step size. 
%Here, $\mathcal{J}$ denotes the kinetic energy functional, while $\mathcal{C}$ and $\mathcal{F}$ encode the continuity equation and the fundamental diagram constraint, respectively. 
Since both $\mathcal{C}$ and $\mathcal{F}$ are closed, convex, and proper (i.e., CCP sets), their intersection remains convex, and the Euclidean projection onto $\mathcal{C} \cap \mathcal{F}$ can be efficiently computed using classical methods, such as Dykstra’s algorithm \cite{boyle1986method,bauschke1994dykstra}, ADMM \cite[Chap.~6]{parikh2014proximal}, or the DRS algorithm itself \cite[Eq.(50)]{bauschke2013projection}.

To decouple the constraint sets and facilitate parallel computation, we consider a consensus reformulation of problem \eqref{eq:drs-1} as detailed in \cite[Page 53 \& 106]{ryu2022large}. Specifically, we introduce three copies of the variable, denoted $\bx_{\mathcal{J}}, \bx_{\mathcal{C}}, \bx_{\mathcal{F}}$, and enforce agreement among them, so that
\begin{equation*}
\min_{\bx} \;\; \iota_{\mathbf C} \left(\bx_{\mathcal J},\bx_{\mathcal F},\bx_{\mathcal C}\right) + \mathcal{J}(\bx_{\mathcal J}) + \iota_{\mathcal{C}}(\bx_{\mathcal C}) + \iota_{\mathcal F}(\bx_{\mathcal F}) \\
%\mbox{subject to }  \;\;\; &\bx_{\mathcal J} = \bx_{\mathcal F} = \bx_{\mathcal C}
\end{equation*}
where $\iota_{\mathbf C}$ is the indicator function of the consensus set $\mathbf C:= \{(\bx_{\mathcal J},\bx_{\mathcal F},\bx_{\mathcal C})\;|\; \bx_{\mathcal J}=\bx_{\mathcal F}=\bx_{\mathcal C}\}$. This formulation recasts the original problem \eqref{eq:obj_compact} under the consensus technique.
%and the objective $\mathcal{J}(\bx_{\mathcal J}) + \iota_{\mathcal{C}}(\bx_{\mathcal C}) + \iota_{\mathcal F}(\bx_{\mathcal F}) =: \displaystyle \sum_{i=1}^3 g_i(\bx)$, allowing each functional or constraint to be handled independently through its own proximal operator. 
The algorithm maintains auxiliary variables $\bz_{\mathcal{J}}, \bz_{\mathcal{C}}, \bz_{\mathcal{F}}$, each serving as a local estimate of a shared global variable. At each iteration, we perform a reflection across the current average $\bar{\bz}^k$, apply the corresponding proximal or projection operator to each component, and then update the auxiliary variables via an averaging step. This consensus strategy follows the DRS scheme as in \cite[Page 52]{ryu2022large}, and the resulting algorithm is given below.
\black

\begin{algorithm}[htb!]
\caption{\bf Douglas–Rachford Splitting via Consensus Form}
\label{alg:drs-consensus}
\begin{algorithmic}[1]
\STATE \textbf{Input:} Initialize $\{\bz_{\mathcal{J}}^0, \bz_{\mathcal{C}}^0, \bz_{\mathcal{F}}^0\}$ and $\{\bx_{\mathcal{J}}^0, \bx_{\mathcal{C}}^0, \bx_{\mathcal{F}}^0\}$, step size $\alpha$. %and weights $\omega_{\mathcal{J},\mathcal{C},\mathcal{F}} = 1/3$
%\STATE \textbf{Set:} $\omega_{\mathcal{J}} = \omega_{\mathcal{C}} = \omega_{\mathcal{F}} = 1/3$.
\WHILE{not converged}
% \STATE Compute weighted average by $\bx^{k+1/2} = \omega_{\mathcal{J}} \bz_{\mathcal{J}}^{k} + \omega_{\mathcal{C}} \bz_{\mathcal{C}}^{k} + \omega_{\mathcal{F}} \bz_{\mathcal{F}}^{k}$
\STATE Compute weighted average by $\bx^{k+1/2} = \displaystyle \frac{1}{3}\left(\bz_{\mathcal{J}}^{k} + \bz_{\mathcal{C}}^{k} + \bz_{\mathcal{F}}^{k}\right)$

\STATE Compute proximal step by 
\begin{equation*}
\begin{bmatrix}
\bx_{\mathcal{J}}^{k+1}\\
\bx_{\mathcal{C}}^{k+1}\\
\bx_{\mathcal{F}}^{k+1}
\end{bmatrix}
=
\begin{bmatrix}
&\operatorname{Prox}_{\alpha \mathcal{J}}(2\bx^{k+1/2} -\bz_{\mathcal{J}}^k)\\
&\operatorname{Proj}_{\mathcal{C}}(2\bx^{k+1/2} - \bz_{\mathcal{C}}^k)\\
&\operatorname{Proj}_{\mathcal{F}}(2\bx^{k+1/2} - \bz_{\mathcal{F}}^k)    
\end{bmatrix}
\end{equation*}

% \begin{equation*}
% \bx_{\mathcal{J}}^{k+1} = \operatorname{Prox}_{\alpha \mathcal{J}}(2\bx^{k+1/2} -\bz_{\mathcal{J}}^k), \quad 
% \bx_{\mathcal{C}}^{k+1} = \operatorname{Proj}_{\mathcal{C}}(2\bx^{k+1/2} - \bz_{\mathcal{C}}^k), \quad 
% \bx_{\mathcal{F}}^{k+1} = \operatorname{Proj}_{\mathcal{F}}(2\bx^{k+1/2} - \bz_{\mathcal{F}}^k)
% \end{equation*}

\STATE Update auxiliary variables by
\begin{equation*}
\begin{bmatrix}
\bz_{\mathcal{J}}^{k+1}\\
\bz_{\mathcal{C}}^{k+1}\\
\bz_{\mathcal{F}}^{k+1}
\end{bmatrix}
=
\begin{bmatrix}
\bz_{\mathcal{J}}^k\\  
\bz_{\mathcal{C}}^k\\
\bz_{\mathcal{F}}^k
\end{bmatrix}
+
\begin{bmatrix}
&\bx_{\mathcal{J}}^{k+1}\\
&\bx_{\mathcal{C}}^{k+1}\\
&\bx_{\mathcal{F}}^{k+1}
\end{bmatrix}
- \bx^{k+1/2}
\end{equation*}

% \begin{equation*}
% \bz_{\mathcal{J}}^{k+1} = \bz_{\mathcal{J}}^k + \bx_{\mathcal{J}}^{k+1} - \bx^{k+1/2}, \quad
% \bz_{\mathcal{C}}^{k+1} = \bz_{\mathcal{C}}^k + \bx_{\mathcal{C}}^{k+1} - \bx^{k+1/2}, \quad
% \bz_{\mathcal{F}}^{k+1} = \bz_{\mathcal{F}}^k + \bx_{\mathcal{F}}^{k+1} - \bx^{k+1/2}
% \end{equation*}

\black

\ENDWHILE
\RETURN optimal $\bx^* = (\rho^*,m^\star)$
\end{algorithmic}
\end{algorithm}

\begin{proposition}[\bf Convergence of DRS]
The sequence $\{\bx^k\}$ generated by Algorithm~\ref{alg:drs-consensus} converges to the unique minimizer of the constrained optimal transport problem.
\end{proposition}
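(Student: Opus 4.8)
The plan is to recognize Algorithm~\ref{alg:drs-consensus} as nothing more than the generic Douglas--Rachford iteration \eqref{update-drs} applied to the \emph{lifted} consensus splitting, and then to invoke the standard convergence guarantee for DRS on a sum of two CCP functions. Writing the consensus problem as $\min_{\mathbf u} F(\mathbf u)+G(\mathbf u)$ with stacked variable $\mathbf u=(\bx_{\calJ},\bx_{\calC},\bx_{\calF})$, I would set the separable term $F(\mathbf u)=\calJ(\bx_{\calJ})+\iota_{\calC}(\bx_{\calC})+\iota_{\calF}(\bx_{\calF})$ and the coupling term $G(\mathbf u)=\iota_{\mathbf C}(\mathbf u)$, the indicator of the diagonal subspace $\mathbf C=\{\bx_{\calJ}=\bx_{\calC}=\bx_{\calF}\}$.

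Next I would match the algorithm lines to the DRS template. Because $F$ is blockwise separable, $\operatorname{Prox}_{\alpha F}$ acts coordinatewise and reduces exactly to the three operators $\operatorname{Prox}_{\alpha\calJ}$, $\operatorname{Proj}_{\calC}$, $\operatorname{Proj}_{\calF}$ computed in Proposition~\ref{prop:projj} and the two subsequent propositions; and $\operatorname{Prox}_{\alpha G}=\operatorname{Proj}_{\mathbf C}$ is the orthogonal projection onto the diagonal, i.e. the averaging $\tfrac13(\bz_{\calJ}+\bz_{\calC}+\bz_{\calF})$ broadcast to all three blocks. Thus Line~3 realizes $\operatorname{Prox}_{\alpha G}$, the reflection $2\bx^{k+1/2}-\bz_\bullet$ followed by the block prox in Line~4 realizes $\operatorname{Prox}_{\alpha F}$ of the reflected point, and Line~5 is the $\bz$-update, so the scheme is precisely \eqref{update-drs} for $(F,G)$.

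I would then verify the hypotheses of the DRS convergence theorem. The terms $\iota_{\calC}$, $\iota_{\calF}$, $\iota_{\mathbf C}$ are indicators of closed convex sets ($\calC$ affine, $\calF$ cut out by the convex fundamental-diagram inequality, $\mathbf C$ a subspace), and $\calJ$ is the perspective of $m\mapsto\tfrac12\|m\|^2$, hence jointly convex, closed, and proper on $\rho\ge 0$ with the boundary value at $\rho=0$ fixed as in \eqref{eq:proj_obj}; so $F$ and $G$ are CCP. By Remark~\ref{rem:regularity} the problem lives on a finite-dimensional Hilbert product space, and Assumption~\ref{ass:feasible} supplies a strictly feasible point, giving $\operatorname{ri}(\calC\cap\calF)\neq\varnothing$ and hence the optimality inclusion $0\in\partial\calJ+\partial\iota_{\calC\cap\calF}$ together with a nonempty solution set of the associated monotone inclusion. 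Invoking the standard DRS result \cite{ryu2022large}, the consensus iterates converge to a fixed point of the DR operator and the common value $\bx^{k+1/2}$ converges to a point solving this inclusion, i.e. a minimizer of Problem~\ref{prob:1}; since the space is finite-dimensional, weak convergence is strong convergence, and Theorem~\ref{thm:unique} forces this minimizer to be unique, so the whole sequence converges to $(\rho^*,m^\star)$.

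The main obstacle is not the fixed-point machinery but the passage from the lifted consensus solution back to the original (un-lifted) problem. One must check that at any DR fixed point the three copies genuinely coincide (enforced by $\iota_{\mathbf C}$) and that their shared value solves $0\in\partial\calJ+\partial\iota_{\calC\cap\calF}$ rather than merely some relaxed condition. This step leans on the constraint qualification of Assumption~\ref{ass:feasible}, which justifies the subdifferential sum rule $\partial\iota_{\calC\cap\calF}=\partial\iota_{\calC}+\partial\iota_{\calF}$ and strong duality, and I would also handle with care the locus $\rho=0$, where $\calJ$ is defined by its closed extension, to ensure the recovered limit is feasible for the nonlinear constraint $\calF$ as well as for $\calC$.
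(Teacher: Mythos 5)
Your proposal is correct and takes essentially the same route as the paper's proof: both recognize Algorithm~\ref{alg:drs-consensus} as standard two-block Douglas--Rachford applied to the lifted consensus splitting, verify that $\calJ$, $\iota_{\calC}$, $\iota_{\calF}$ (and the consensus indicator) are CCP with a feasible point supplied by Assumption~\ref{ass:feasible}, and then invoke the standard DRS convergence theorem together with the uniqueness claim of Theorem~\ref{thm:unique} to identify the limit. Your version is merely more explicit than the paper's about the product-space lifting, the matching of the algorithm lines to the DRS template, and the constraint qualification needed to pass from the consensus fixed point back to a minimizer of the original problem --- details the paper delegates to its citations rather than a genuinely different argument.
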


\begin{proof}{Proof.}
The functional $\mathcal{J}$ is strictly convex and lower semi-continuous on its corresponding domain, and the constraint sets $\mathcal{C}$ and $\mathcal{F}$ are both non-empty, closed, and convex. Hence, the indicator function $\iota_{\mathcal{C} \cap \mathcal{F}}$ is proper, convex, and lower semi-continuous. The sum $\mathcal{J} + \iota_{\mathcal{C}} + \iota_{\mathcal{F}}$ is CCP, and the optimization problem admits a unique minimizer under Assumption \ref{ass:feasible}. Algorithm \ref{alg:drs-consensus} applies to this sum, with every proximal operator is well-defined and firmly non-expansive. Under these conditions, DRS is known to generate a sequence that converges weakly to the unique minimizer, see also \cite{eckstein1992douglas,bauschke2017}.
\end{proof}

Each iteration of the Douglas–Rachford splitting (DRS) algorithm involves evaluating the proximal operator of the kinetic energy functional $\mathcal{J}$ and projecting onto the constraint sets $\mathcal{C}$ and $\mathcal{F}$. The proximal step for $\mathcal{J}$ admits a closed-form update via a cubic equation and is inexpensive. Projection onto $\mathcal{F}$ is also efficient, as it involves solving a scalar nonlinear equation pointwise at each grid location.

\begin{remark}[\bf Computational complexity]\label{remark:drs-complexity}
The dominant cost in each iteration arises from projecting onto $\mathcal{C}$, which requires solving a global space–time Poisson equation of the form $\bK \bK^* \phi = \bK \bx - y$. Let $M=P \cdot \prod_{\ell=1}^d N_{\ell}$ denote the total number of unknowns on the space–time grid. Using sparse Cholesky \cite[Chapter 10]{higham2002accuracy}, the complexity scales as $\mathcal{O}(M^{3/2})$ in two dimensions and up to $\mathcal{O}(M^2)$ in higher dimensions. Iterative methods like conjugate gradient \cite{saad2003iterative} reduce this to $\mathcal{O}(M \log M)$. When the grid is uniform and boundary conditions are compatible, FFT-based solvers \cite[Chapter 1.5.2]{hockney2021computer} provide an efficient alternative with the same $\mathcal{O}(M \log M)$ complexity.
\end{remark}

\section{Chambolle–Pock method}\label{sec:CPmethod}
Up to this point, we have described how the problem can be approached directly through the primal formulation, as discussed in Section~\ref{sec:primal}. The computational bottleneck arises, i.e., each iteration requires solving a space–time Poisson equation associated with the linear continuity constraint. This motivates an alternative perspective. In what follows, we show that the same problem naturally fits within the Chambolle–Pock framework. Remarkably, this approach eliminates the need to solve any Poisson equations, offering a fully decoupled iteration that preserves the structure while significantly reducing computational overhead. The problem in \eqref{eq:obj_compact} fits naturally into the Chambolle–Pock framework. Specifically, we rewrite the problem as
\[
\min_{\bx} \; f(\bx) + g(\bK\bx),
\]
with $\bK\bx:= \partial_t \rho + \nabla_x \cdot m$ encodes the continuity constraint\footnote{Here we slightly abuse notation $f$, $g$ to avoid introducing unnecessary new notation. Although we continue to write $f(\bx)$ and $g(K\bx)$, their definitions differ from those used in Section~\ref{sec:primal}.} and 
\begin{equation*}
f(\bx) = \mathcal{J}(\bx) + \iota_{\mathcal{F}}(\bx) \quad \mbox{ and } \quad 
g(\bK\bx) =
\begin{cases}
0 & \text{if } \bK\bx = 0\\
+\infty & \text{otherwise}
\end{cases}.
\end{equation*}
The primal-dual formulation thus reads
\begin{equation*}
\min_{\bx} \max_{\phi} \; \langle \bK\bx,\phi \rangle + f(\bx) - g^*(\phi),
\end{equation*}
where $f$ is a proper, closed, convex function, $g^*$ is the Fenchel conjugate of $g$, and $\bK$ is a linear operator with $g(\bK\bx)$ corresponds to the indicator function $\iota_{{0}}(\bK\bx)$, enforcing a linear constraint.

\begin{proposition}[\bf Proximal operator of $g^*$]\label{prop:prox-g}
The proximal operator of the conjugate function of $g(K\bx)$ is defined as
\begin{equation}\label{eq:update-dual}
\operatorname{Prox}_{\tau g^*}(\phi) = \phi.
\end{equation}
\end{proposition}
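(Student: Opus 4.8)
The plan is to compute $g^*$ explicitly and then observe that the proximal operator of the zero function is the identity. First I would recognize that $g$, as defined in the Chambolle--Pock splitting, is precisely the indicator function of the singleton $\{0\}$ in the range space $\calH_2$, that is, $g(\by) = \iota_{\{0\}}(\by)$, which is proper, closed, and convex. The Fenchel conjugate is then obtained directly from its definition,
\begin{equation*}
g^*(\phi) = \sup_{\by \in \calH_2} \Bigl( \langle \phi, \by \rangle - g(\by) \Bigr) = \langle \phi, 0 \rangle = 0,
\end{equation*}
since the supremum is effectively restricted to the only point $\by = 0$ at which $g$ is finite. Hence $g^*$ is identically zero on $\calH_2$; equivalently, it is the support function of $\{0\}$, which vanishes everywhere.

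With $g^* \equiv 0$ established, I would substitute into the definition of the proximal operator (Definition~\ref{def:proximal}):
\begin{equation*}
\operatorname{Prox}_{\tau g^*}(\phi) = \argmin_{\tilde\phi \in \calH_2} \; \tau\, g^*(\tilde\phi) + \frac{1}{2}\|\tilde\phi - \phi\|^2 = \argmin_{\tilde\phi \in \calH_2} \; \frac{1}{2}\|\tilde\phi - \phi\|^2 = \phi,
\end{equation*}
where the minimizer is unique and equals $\phi$ because the objective is a strictly convex quadratic minimized at its center. This establishes \eqref{eq:update-dual}.

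There is essentially no analytic obstacle here; the computation is a one-line application of the conjugate calculus. The only points requiring care are the correct identification of $g$ with the indicator of the singleton $\{0\}$ (rather than with the linear-constraint indicator $\iota_{\mathcal C}$ used in Section~\ref{sec:primal}, given the footnoted reuse of the symbols $f,g$) and the standard fact that the conjugate of an indicator of a singleton is the zero support function. It is worth remarking that this identity is exactly what makes the Chambolle--Pock scheme attractive for this problem: the dual update reduces to a pure reflection/accumulation of the residual $\bK\bar{\bx}^k$, so the primal--dual iteration \eqref{update-cp} never requires solving the space--time Poisson system that dominated the cost of the Douglas--Rachford method (cf.\ Remark~\ref{remark:drs-complexity}).
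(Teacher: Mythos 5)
Your proposal is correct and follows essentially the same route as the paper: identify $g$ as the indicator $\iota_{\{0\}}$, conclude $g^* \equiv 0$, and observe that the proximal operator of the zero function is the identity. The only difference is that you spell out the conjugate computation explicitly, whereas the paper asserts it directly; this is a matter of detail, not of approach.
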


\begin{proof}{Proof.}
The indicator function $g(\bK\bx) = \iota_{{0}}(\bK\bx)$ implies that $g^*$ is identically zero on its domain. Hence, the proximal operator reduces to the unconstrained minimization
\begin{equation*}
\operatorname{Prox}_{\sigma g^*}(\phi) = \argmin_{\tilde{\phi}} \; \frac{1}{2} \|\phi - \tilde{\phi}\|^2,
\end{equation*}
whose unique minimizer is $\tilde{\phi} = \phi$.
\end{proof}

\begin{proposition}
The proximal operator of $f = \mathcal{J}(\bx) + \iota_{\mathcal{F}}(\bx)$ is given by
\begin{equation}\label{eq:update-primal}
\begin{aligned}
\operatorname{Prox}_{\tau f}(\bx) = 
\begin{cases}
\operatorname{Prox}_{\tau \calJ}(\bx) \quad &\mbox{if } \;\; \operatorname{Prox}_{\tau \mathcal{J}}(\bx) \in \mathcal{F}\\
\big(\rho^*,\; \mathcal{Q}(\rho^*)\big) \quad &\mbox{otherwise}
\end{cases}
\end{aligned},
\end{equation}
where the $\rho^*$ is the largest positive root of the cubic polynomial 
\begin{equation*}
(\tilde{\rho} - \rho) - \frac{\tau v_0^2}{2} \left(1 - \frac{\tilde \rho}{\hat\rho} \right)^2 - \left(m - (\tilde\rho+\tau) v_0 \left(1 - \frac{\tilde \rho}{\hat\rho} \right) \right) \left(v_0   + \frac{2 \tilde \rho v_0}{\hat\rho}\right) = 0.
\end{equation*}
\end{proposition}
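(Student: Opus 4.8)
The plan is to read $\operatorname{Prox}_{\tau f}$ as the kinetic-energy prox restricted to the feasible set. Since $f=\calJ+\iota_\calF$, the definition in \eqref{eq:prox-def} gives
\[
\operatorname{Prox}_{\tau f}(\rho,m)=\argmin_{(\tilde\rho,\tilde m)\in\calF,\;\tilde\rho>0}\;\frac{\tau\|\tilde m\|^2}{2\tilde\rho}+\frac12\|\rho-\tilde\rho\|^2+\frac12\|m-\tilde m\|^2.
\]
The functional $\calJ$ is convex as a perspective function, and the two quadratic penalties are strictly convex, so the objective is strictly convex on $\{\tilde\rho>0\}$ and admits a unique minimizer; the whole task is to locate it in closed form.

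First I would test whether the cap is inactive. Dropping $\iota_\calF$, the minimizer of the remaining unconstrained objective is exactly $\operatorname{Prox}_{\tau\calJ}(\rho,m)$ from Proposition~\ref{prop:projj}. If this point already satisfies $\|\tilde m\|\le\mathcal{Q}(\tilde\rho)$ it lies in $\calF$, and being the global unconstrained minimizer it is \emph{a fortiori} the constrained minimizer. This produces the first branch of \eqref{eq:update-primal}.

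Otherwise the unconstrained minimizer violates the cap, and by convexity the constrained minimizer lies on the boundary $\|\tilde m\|=\mathcal{Q}(\tilde\rho)$. I would first reduce the momentum to a scalar: for fixed magnitude $\|\tilde m\|=r$ and fixed $\tilde\rho$, both the kinetic term and the cap depend only on $r$, while $\frac12\|m-\tilde m\|^2$ is minimized over directions by aligning $\tilde m$ with $m$. Hence the optimal $\tilde m=\mathcal{Q}(\tilde\rho)\,m/\|m\|$, and the problem collapses to a one-dimensional minimization over $\tilde\rho\in(0,\hat\rho)$ with $\tilde m$ replaced by the scalar $\mathcal{Q}(\tilde\rho)$. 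Writing the KKT conditions of this boundary problem, the momentum stationarity condition yields the multiplier $\lambda=\|m\|-(\tilde\rho+\tau)v_0(1-\tilde\rho/\hat\rho)$; substituting $\|\tilde m\|^2/\tilde\rho^2=v_0^2(1-\tilde\rho/\hat\rho)^2$ together with $\mathcal{Q}'(\tilde\rho)=v_0(1-2\tilde\rho/\hat\rho)$ into the density stationarity condition collapses everything to a cubic polynomial in $\tilde\rho$ of the form stated. The constrained minimizer is then $\rho^*$, the admissible root in $(0,\hat\rho)$ selected by optimality, and the momentum is $\mathcal{Q}(\rho^*)\,m/\|m\|$, giving the second branch.

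The step I expect to be the main obstacle is the boundary/reduction argument: showing rigorously that infeasibility of the unconstrained prox forces the cap to be active (so the solution does not retreat into the interior), and justifying the parallel-momentum reduction cleanly in arbitrary dimension $d$. Once the problem is scalarized, deriving the cubic is routine differentiation, and verifying that exactly one root lies in $(0,\hat\rho)$ follows from strict convexity of the reduced scalar objective.
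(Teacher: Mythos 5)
Your proposal is correct and follows essentially the same route as the paper: restrict the prox to $\calF$, split on whether the unconstrained kinetic prox $\operatorname{Prox}_{\tau\calJ}(\bx)$ is already feasible, and when the cap is active impose KKT conditions on the boundary $\tilde m = \mathcal{Q}(\tilde\rho)$, eliminate the multiplier $\lambda_\calF = \|m\| - (\tilde\rho+\tau)\,v_0(1-\tilde\rho/\hat\rho)$, and collapse to the stated cubic. Two minor points in your favor: you explicitly justify boundary activity and the alignment $\tilde m = \mathcal{Q}(\tilde\rho)\,m/\|m\|$ in arbitrary dimension $d$ (both left implicit in the paper, which treats $m$ as a scalar), and your correct derivative $\mathcal{Q}'(\tilde\rho)=v_0(1-2\tilde\rho/\hat\rho)$ shows that the factor $\bigl(v_0 + 2\tilde\rho v_0/\hat\rho\bigr)$ appearing in the paper's cubic carries a sign typo.
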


\begin{proof}{Proof.}
By definition, the proximal operator of $f$ is 
\begin{equation*}
\operatorname{Prox}_{\tau f}(\bx) = \argmin_{\tilde{\bx}}\; \frac{1}{2} \|\bx - \tilde{\bx}\|^2 + \tau \mathcal{J}(\tilde{\bx}) + \iota_{\mathcal{F}}(\tilde{\bx}).
\end{equation*}
This corresponds to a proximal minimization over the constraint set $\mathcal{F}$ by
\begin{equation}\label{eq:proj-prox}
\operatorname{Prox}_{\tau f}(\bx) = \argmin_{\tilde{\bx} \in \mathcal{F}}  \frac{1}{2} \|\bx - \tilde{\bx}\|^2 + \tau \mathcal{J}(\tilde{\bx}),
\end{equation}
since $\mathcal{J}$ is convex and the constraint set $\mathcal{F}$ is closed and convex, the problem in \eqref{eq:proj-prox} admits a unique minimizer.

Then, by introducing the corresponding dual variable $\lambda_\calF$ for the fundamental diagram constraint, we rewrite the problem as the saddle-point problem that reads
\begin{equation*}
\begin{aligned}
\calL\Big((\tilde \rho, \tilde m),\lambda_\calF\Big) &= \frac{1}{2} \|\rho - \tilde{\rho}\|^2 + \frac{1}{2} \|m - \tilde{m}\|^2 + \frac{\tau}{2} \frac{\tilde m^2}{\tilde \rho} + \lambda_\calF(\tilde{m} - \mathcal{Q}(\tilde \rho))\\
&= \frac{1}{2} \|\rho - \tilde{\rho}\|^2 + \frac{1}{2} \|m - \tilde{m}\|^2 + \frac{\tau}{2} \frac{\tilde m^2}{\tilde \rho} + \lambda_\calF \left(\tilde{m} - \tilde \rho v_0 \left(1 - \frac{\tilde \rho}{\hat\rho} \right) \right).
\end{aligned}
\end{equation*}
The corresponding first-order optimality conditions are
\begin{equation*}
\max_{\lambda_\calF \geq 0} \min_{\tilde x,\tilde m} \;\; \calL(\tilde \rho, \tilde m,\lambda_\calF)
\end{equation*}
and the associated min-max formulation is given by
\begin{equation*}
\begin{aligned}
\frac{\partial \mathcal L}{\partial \tilde\rho} &= (\tilde\rho - \rho) \;-\; \frac{\tau}{2}\frac{\tilde m^{2}}{\tilde\rho^{2}} -\lambda_{\mathcal F}\,\mathcal Q'(\tilde\rho) = 0,\\
\frac{\partial \mathcal L}{\partial \tilde m} &= (\tilde m - m) \;+\; \tau\,\frac{\tilde m}{\tilde\rho} +\lambda_{\mathcal F} = 0,\\
\frac{\partial \mathcal L}{\partial \lambda_{\mathcal F}} 
&= \tilde m \;-\; \mathcal Q(\tilde\rho) = 0,    
\end{aligned}    
\end{equation*}
where $\displaystyle \mathcal Q(\tilde \rho) = \tilde \rho v_0 \left(1 - \tilde \rho/\hat\rho \right)$. The pointwise minimization naturally separates into two cases, depending on whether the fundamental diagram constraint is active.

In the first case, the proximal point $\bx(t,x)$ lies strictly inside the constraint set, i.e., $\bx = \operatorname{Prox}_{\tau \mathcal{J}}(\bx) \in \mathcal{F}$. In this situation, the constraint is inactive and the associated dual variable satisfies $\lambda_\mathcal{F} = 0$. The proximal update thus reduces to the unconstrained kinetic proximal operator.

In the second case, where $\bx = \operatorname{Prox}_{\tau \mathcal{J}}(\bx) \notin \mathcal{F}$, the constraint is active and must be enforced via the multiplier $\lambda_\mathcal{F} > 0$. Substituting the third condition $\tilde m = \mathcal{Q}(\tilde \rho)$ into the second yields
\begin{equation*}
\lambda_\mathcal{F} = m - \mathcal{Q}(\tilde \rho) \left(1 + \frac{\tau}{\tilde \rho}\right),
\end{equation*}
and substituting back to the first optimality condition leads to the cubic polynomial 
\begin{equation*}
(\tilde \rho - \rho) 
- \frac{\tau\, \mathcal{Q}(\tilde \rho)^2}{2\, \tilde \rho^2} 
- \left(m - \mathcal{Q}(\tilde \rho) \left(1 + \frac{\tau}{\tilde \rho}\right)\right) \mathcal{Q}'(\tilde \rho) = 0,
\end{equation*}
with the $\mathcal Q(\tilde \rho)$ are specified to the Greenshields case, it becomes to
\begin{equation*}
(\tilde \rho - \rho) 
- \frac{\tau v_0^2}{2} \left(1 - \frac{\tilde \rho}{\hat \rho} \right)^2 
- \left(m - (\tilde \rho + \tau)\, v_0 \left(1 - \frac{\tilde \rho}{\hat \rho} \right) \right) 
  \left( v_0 + \frac{2 \tilde \rho v_0}{\hat \rho} \right) 
= 0.
\end{equation*}
% \begin{equation*}
% \begin{aligned}
% &(\tilde \rho -\rho) -\frac{\tau \mathcal Q(\tilde \rho )^2}{2\tilde \rho^2} - \left(m - \mathcal{Q}(\tilde \rho) \left(1 + \frac{\tau}{\tilde \rho}\right)\right) \mathcal Q^\prime (\tilde\rho)\\
% &= (\tilde \rho - \rho) - \frac{\tau v_0^2}{2} \left(1 - \frac{\tilde \rho}{\hat \rho} \right)^2 - \left(m - (\tilde \rho + \tau) v_0 \left(1 - \frac{\tilde \rho}{\hat \rho} \right) \right) \left( v_0 + \frac{2 \tilde \rho v_0}{\hat \rho} \right) = 0. 
% \end{aligned}  
% \end{equation*}
This equation admits a unique minimizer $\tilde \rho^*$ in the feasible interval $(0, \hat \rho]$, and the corresponding optimizer $\tilde m^* = \mathcal{Q}(\tilde \rho^*)$ can be computed accordingly. In practice, it can be solved efficiently via Newton’s method, or approximated using a projected gradient step \cite[Page 49]{ryu2022large}.

% \noindent \textbf{Case 2:} $\bx = \operatorname{Prox}_{\tau \calJ}(\bx) \notin \calF$, we then have $\displaystyle \lambda_\calF = m - \tilde m (1+\frac{\tau}{\tilde \rho}) = m - \mathcal{Q}(\tilde \rho) (1+\frac{\tau}{\tilde \rho})$. Therefore, by substituting  $\tilde{m} = \mathcal{Q}(\tilde \rho)$ and $\lambda_\calF$ into $\partial \mathcal L/\partial \tilde\rho = 0$, we arrives at  cubic polynomial of $\tilde \rho$ as
% \begin{equation*}
% (\tilde{\rho} - \rho) - \frac{\tau v_0^2}{2} \left(1 - \frac{\tilde \rho}{\hat\rho} \right)^2 - \left(m - (\tilde\rho+\tau) v_0 \left(1 - \frac{\tilde \rho}{\hat\rho} \right) \right) \left(v_0   + \frac{2 \tilde \rho v_0}{\hat\rho}\right) = 0
% \end{equation*}
% that admit a unique minimizer $\rho^*$ and the optimizer $m^*$ can be compute accordingly \footnote{In practice, one may also consider to \emph{projected gradient method} \cite[Page 49]{ryu2022large}}.
\end{proof}

The iteration of the Chambolle–Pock method is outlined below.
\begin{algorithm}[htb!]
\caption{\bf Chambolle-Pock  method}
\label{alg:chambolle-pock}
\begin{algorithmic}[1]
\STATE \textbf{Input:} $\phi^0$, $\bx^0 = \big(\rho^0, m^0\big)$, and $\bar{\bx}^0 = \bx^0$
\WHILE{not converge}
\STATE Update the dual variable by $\phi^{k+1} = \phi^k + \sigma (\bK \bar{\bx}^k) =  \phi^k + \sigma (\partial_t \bar{\rho}^{k+1} + \nabla_x \cdot \bar{m}^k)$
\STATE Update the primal variable by $\bx^{k+1} = \operatorname{Prox}_{\tau f}\left( \rho^{k} + \tau \partial_t \phi^{k+1},\; m^{k} + \tau \nabla_x \phi^{k+1} \right)$
\STATE Extrapolate primal variable by $\bar{\bx}^{k+1}=2\bx^{k+1} - \bx^k$
\ENDWHILE
\RETURN optimal $\bx^* = \big(\rho^*, m^*\big)$
\end{algorithmic}
\end{algorithm}

\black
\begin{theorem}[\bf Convergence]\label{thm:convergence-cp}
Under Assumption \ref{ass:feasible}, let step size $\tau, \sigma > 0$ satisfy $\tau \sigma \|\bK\|^2 < 1$, the iterates $\{\bx^k\}$ in Algorithm \ref{alg:chambolle-pock} converge to weakly the unique primal solution $\bx^*$ of the constrained transport problem  
$$
\min \mathcal{J}(\bx) + \iota_{\calF}(\bx) + \iota_{\{0\}}(\bK\bx)
$$ 
and the dual iterates $\{\phi^k\}$ converge weakly to a solution $\phi^*$ of the corresponding dual problem. Furthermore, the ergodic primal-dual gap converges at rate $\mathcal{O}(1/k)$.    
\end{theorem}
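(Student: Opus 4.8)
The plan is to recognize the statement as a direct specialization of the standard Chambolle--Pock (PDHG) convergence theory \cite{chambolle2011first}, \cite[Chapter 3]{ryu2022large} to the present splitting $f = \calJ + \iota_{\calF}$, $g = \iota_{\{0\}}$, $\bK = (\partial_t, \nabla_x)$. The template theorem asserts that, given (i) $f$ and $g$ both CCP, (ii) $\bK$ bounded linear, (iii) existence of a saddle point of the Lagrangian $\langle \bK\bx,\phi\rangle + f(\bx) - g^*(\phi)$, and (iv) the step-size condition $\tau\sigma\|\bK\|^2 < 1$, the joint iterates converge weakly to a saddle point and the ergodic partial primal--dual gap decays as $\mathcal{O}(1/k)$. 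The whole argument thus reduces to verifying these four hypotheses in our setting and then transferring uniqueness of the primal limit from Theorem~\ref{thm:unique}.

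First I would check the structural hypotheses (i)--(ii). The kinetic functional $\calJ(\rho,m) = \|m\|^2/(2\rho)$ is the perspective of the convex map $m \mapsto \tfrac12\|m\|^2$, hence jointly convex and lower semicontinuous on $\{\rho>0\}$ (extended by $0$ at the origin and $+\infty$ for $\rho<0$); since $\calF$ is closed and convex, $\iota_{\calF}$ is CCP, and $f = \calJ + \iota_{\calF}$ is CCP because the two domains overlap by feasibility. The term $g = \iota_{\{0\}}$ is trivially CCP, its conjugate $g^*\equiv 0$, and the corresponding prox is the identity computed in Proposition~\ref{prop:prox-g}. By Remark~\ref{rem:regularity} the discrete operator $\bK$ is bounded linear on the finite-dimensional Hilbert product space, so $\|\bK\|<\infty$ and step sizes obeying $\tau\sigma\|\bK\|^2<1$ exist. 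The prox operators of $f$ and $g^*$ used in Algorithm~\ref{alg:chambolle-pock} are single-valued and firmly nonexpansive precisely because $f$ and $g^*$ are CCP, so no smoothness of $\calJ$ is needed for the scheme to be well defined.

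The crux is hypothesis (iii): existence of a saddle point, equivalently strong duality with attainment. Here I would invoke Assumption~\ref{ass:feasible}, which supplies a strictly feasible pair and hence $\operatorname{ri}(\calC\cap\calF)\neq\varnothing$; by Fenchel--Rockafellar this yields zero duality gap and a nonempty set of primal--dual optimizers (the total-duality statement already recorded after Assumption~\ref{ass:feasible}), so a saddle point of the Lagrangian exists. With (i)--(iv) in hand I would apply the template theorem: viewing Algorithm~\ref{alg:chambolle-pock} as a preconditioned proximal-point iteration, the joint iterate $(\bx^k,\phi^k)$ is Fej\'er monotone with respect to the saddle-point set in the induced metric and converges weakly to some saddle point. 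Its primal component is then a primal optimizer, which is \emph{unique} by Theorem~\ref{thm:unique} (strict convexity of $\calJ$), so $\bx^k\rightharpoonup\bx^*$, while $\phi^k\rightharpoonup\phi^*$ for a dual optimizer that need not be unique. The $\mathcal{O}(1/k)$ ergodic rate follows from the standard partial-gap estimate evaluated at the ergodic averages $\bar\bx^K = \tfrac1K\sum_{k=1}^K \bx^k$ and $\bar\phi^K = \tfrac1K\sum_{k=1}^K \phi^k$.

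I expect the main obstacle to be the saddle-point existence step, since it is the only non-automatic hypothesis: one must confirm that the strict-feasibility condition of Assumption~\ref{ass:feasible} genuinely places a point in the relative interior of $\calC\cap\calF$ on which $\calJ$ is finite, so that Fenchel--Rockafellar applies cleanly despite $\calJ$ being nonsmooth and finite only on $\{\rho>0\}$. A secondary technical point is that the clean $\mathcal{O}(1/k)$ statement should be phrased through a restricted (partial) gap over a bounded set containing $(\bx^*,\phi^*)$ rather than a global gap, which I would adopt to avoid the unboundedness introduced by the indicator terms $\iota_{\calF}$ and $\iota_{\{0\}}$.
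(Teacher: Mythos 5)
Your proposal is correct and follows essentially the same route as the paper: both reduce the theorem to verifying the hypotheses of the Chambolle--Pock convergence result (CCP of $f=\calJ+\iota_{\calF}$ and $g=\iota_{\{0\}}$ with $g^*\equiv 0$, boundedness of $\bK$, saddle-point existence via the strict feasibility in Assumption~\ref{ass:feasible}, and the step-size condition $\tau\sigma\|\bK\|^2<1$) and then invoke \cite[Theorem 1]{chambolle2011first}. If anything, you are more careful than the paper's proof, which asserts saddle-point existence and the unique-primal-limit claim without spelling out the Fenchel--Rockafellar argument, the transfer of uniqueness from Theorem~\ref{thm:unique}, or the restricted-gap formulation of the $\mathcal{O}(1/k)$ rate.
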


\begin{proof}{Proof.}
We prove convergence by interpreting our problem within the primal-dual hybrid gradient (PDHG) framework introduced by Chambolle and Pock \cite{chambolle2011first}, and summarized in modern form by \cite{ryu2022large}. The method applies to saddle-point problems of the form
\begin{equation*}
\min_{\bx \in \mathcal{H}_\rho \times \mathcal H_m} \max_{\phi \in \mathcal{H}_\phi} \; \langle \bK \bx, \phi \rangle + f(\bx) - g^*(\phi),
\end{equation*}
where $\mathcal{H}_\rho \times \mathcal H_m$ and $\mathcal{H}_\phi$ are finite-dimensional Hilbert spaces, $f$ and $g$ are proper, convex, and lower semi-continuous functionals, with linear operator $\bK$. The dual functional is the indicator function, whose conjugate $g^*$ is identically zero, and $\operatorname{Prox}_{\sigma g^*}$ is therefore the identity map.

The assumptions of the PDHG \cite{chambolle2011first} are satisfied. The functional $f$ is proper, convex, and lower semi-continuous since both $\mathcal{J}$ and $\iota_{\mathcal{F}}$ have these properties. The primal constraint set $\mathcal{F}$ is convex and closed by construction, and the feasibility assumption in Assumption~\ref{ass:feasible} ensures that the problem admits at least one primal–dual saddle point. Moreover, the operator $\bK$ is linear and bounded, and we assume the primal and dual step sizes satisfy $\tau \sigma \|\bK\|^2 < 1$. Therefore, the convergence result of \cite[Theorem 1]{chambolle2011first} applies. 
\end{proof}

\begin{remark}[\bf Computational complexity]\label{remark:cp-complexity}
The Chambolle–Pock method requires a dual ascent, a primal descent, and an extrapolation step. Assuming $\bx = (\rho, m) \in \mathbb{R}^M$ represents $M$ spatial-temporal degrees of freedom, the forward and adjoint applications of $\bK$ scale as $\mathcal{O}(M)$ under standard discretizations. The proximal operator of the primal term $f$, involving kinetic energy and a nonlinear constraint projection, is separable and solvable in closed form or via Newton updates in $\mathcal{O}(M)$. The dual update is trivial, as the proximal map of $g^*$ is the identity. The total cost per iteration is therefore $\mathcal{O}(M)$.
\end{remark}

% \begin{remark}[\bf Condat--V\~u for generalized objectives]
% In practice, the kinetic energy functional $\mathcal{J}$ may be replaced by convex, continuously differentiable, and proper (CCP) user-defined objectives, but lacks a closed-form proximal operator. In such cases, the Condat-V\~u algorithm \cite[Page 76]{ryu2022large} provides a compelling alternative. 
% Rather than relying on the proximal map of $\mathcal{J}$, which may be unavailable or expensive to evaluate, this method directly incorporates its gradient into a primal–dual iteration. For instance, the constrained problem $\min_{\bx} \;\; \mathcal{J}(\bx) + \iota_{\mathcal{C}}(\bx) + \iota_{\mathcal{F}}(\bx)$ can be approximately solved using the updates as follows
% \begin{equation*}
% \phi^{k+1} = \phi^k + \sigma (\bK \bar\bx^k), \quad
% \bx^{k+1}  = \operatorname{Proj}_{\mathcal{F}} \left( \bx^k - \tau \left( \bK^*\phi^{k+1} + \nabla \mathcal{J}(\bx^k) \right) \right), \quad
% \bar{\bx}^{k+1} = 2\bx^{k+1} - \bx^k.
% \end{equation*}
% It naturally extends the structure of primal–dual splitting to cases where the energy term $\mathcal{J}$ captures more expressive or data-driven modeling.
%\end{remark}

\section{Numerical simulations}\label{sec:numerical}
We present numerical experiments that demonstrate how the proposed dynamic optimal-transport model behaves under fundamental-diagram constraints, highlighting the interplay between congestion, flux, and local capacity limits. We first describe the finite-difference discretization used in the simulations. We then study a canonical one-dimensional example, followed by two-dimensional examples with and without an interior obstacle, comparing each result with the unconstrained Benamou–Brenier baseline. The section concludes with a performance comparison and convergence of the Douglas–Rachford and Chambolle–Pock solvers.

% \subsection{Discretization}

% We adopt a finite-difference discretization scheme consistent with the variational formulation as in \cite{papadakis2014optimal,peyre2019computational}. The space–time domain $[0,1]^d \times [0, t_f]$ is discretized using uniform grids. The temporal interval is divided into $P+1$ time points with time step $\Delta t = t_f / P$, and each spatial dimension is discretized into $N+1$ points with resolution $\Delta x = 1/N$. The mass density $\bar{\bm \rho}$ is defined on the centered grid, while the momentum  $\bar{\mathbf m}$ is defined on staggered grids.

% The one-dimensional discrete divergence operator is approximated by centered differences
% $$
% [\operatorname{\bf div}(\bar{\bm\rho}, \bar{\bbm})]_{i,j} = P(\bar{\bm \rho}_{i,j} - \bar{\bm \rho}_{i,j-1}) + N(\bar{\bbm}_{i,j} - \bar{\bbm}_{i-1,j}),
% $$
% where midpoint averages interpolate between staggered and centered quantities. For instance, a grid with $N = 100$ and $P = 50$ yields $101 \times 51$ grid points for $\bar{\bm\rho}$, with staggered grid values for $|\bar \bbm|$. The two-dimensional discrete divergence operator is thus given by
% $$
% [\operatorname{\bf div}(\bar{\bm\rho}, \bar{\bbm})]_{i,j,k} = P(\bar{\bm\rho}_{i,j,k} - \bar{\bm\rho}_{i,j,k-1}) + N\left(\bar{\bbm}^1_{i,j,k} - \bar{\bbm}^1_{i-1,j,k} + \bar{\bbm}^2_{i,j,k} - \bar{\bbm}^2_{i,j-1,k}\right),
% $$
% where $\bar \bbm^1$ and $\bar \bbm^2$ are placed on vertically and horizontally staggered grids, respectively.

\subsection{One-dimensional transportation}

The one-dimensional scenario is widely and often considered for traffic modeling \cite{coclite2005traffic, garavello2006traffic, yu2020bilateral, tumash2021boundary, nikitin2021continuation} when the density of platoons is conceptualized. In particular, the source and target densities $\bm\rho_0$ and $\bm\rho_1$ have variance $0.06$ and are centered at $0.2$ and $0.8$, respectively. The discretization has $N=100$ and $P=10$. The classical Benamou–Brenier transportation follows McCann’s interpolation \cite{chen2016optimal, chen2018optimal, mccann1997convexity}, i.e., each particle moves at a constant speed along a straight trajectory. As a result, the entire density shifts uniformly from $\bm\rho_0$ to $\bm\rho_1$, maintaining its shape and peak magnitude throughout the evolution, as shown in Figure~\ref{subfig:gauss_uncon}.

When the fundamental diagram constraint is applied with free-flow speeds $2$ and jam densities $0.03$, the evolutions are shown in Figure \ref{subfig:gauss_203}. The high-density core must first diffuse to alleviate congestion. This initial smoothing leads to a more uniform density profile, allowing particles to travel with nearly constant velocity and density. Notably, the system must flatten the peak of the distribution more aggressively to avoid violating the capacity constraint. In this regime, the leading portion of the mass must move faster to create space ahead, ensuring that the trailing portion has sufficient time to advance without exceeding the limits imposed by the fundamental diagram.
\begin{figure}[H]
  \centering
  \subfloat[Unconstrained flow: density translates uniformly]{
    \includegraphics[width=0.48\linewidth]{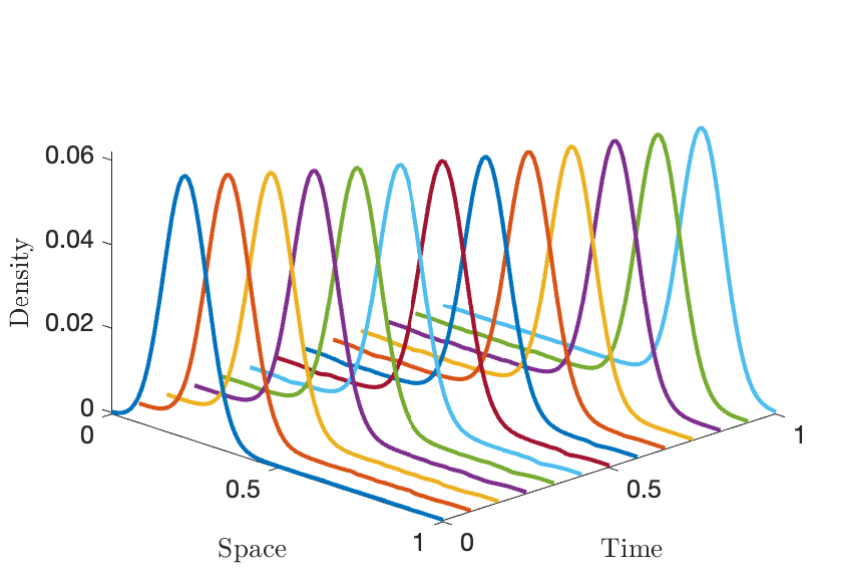}
    \label{subfig:gauss_uncon}
  }
    \subfloat[Free-speed limit: $2$. Jam density: $0.03$]{
    \includegraphics[width=0.48\linewidth]{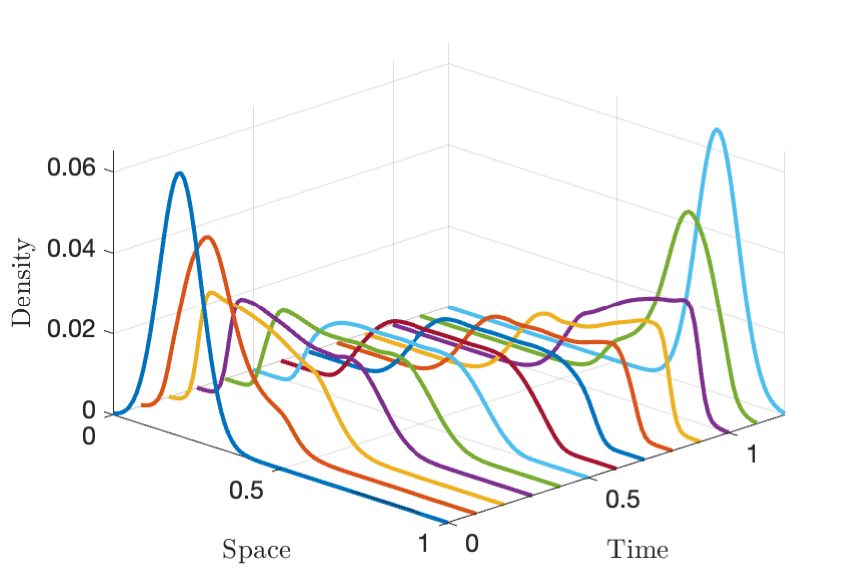}
    \label{subfig:gauss_203}
    }
  \caption{One-dimensional Gaussian transport.}
  \label{fig:gauss_1d}
\end{figure}

% \begin{figure}[htb!]
%   \centering
%   \subfloat[Unconstrained flow: density translates uniformly]{
%     \includegraphics[width=0.48\linewidth]{figure_p1/gaussian_1d.eps}
%     \label{subfig:gauss_uncon}
%   }
%   \subfloat[Free-speed limit: $4$. Jam density: $0.06$]{
%     \includegraphics[width=0.48\linewidth]{figure_p1/fd_1d_406.eps}
%     \label{subfig:gauss_406}}

%   \subfloat[Free-speed limit: $4$. Jam density: $0.03$]{
%     \includegraphics[width=0.48\linewidth]{figure_p1/fd_1d_403.eps}
%     \label{subfig:gauss_403}
%   }
%   \subfloat[Free-speed limit: $2$. Jam density: $0.03$]{
%     \includegraphics[width=0.48\linewidth]{figure_p1/fd_1d_203.eps}
%     \label{subfig:gauss_203}
%     }
%   \caption{One-dimensional Gaussian transport with varying free-speed limits and jam densities.}
%   \label{fig:gauss_1d}
% \end{figure}
To visualize the influence of the fundamental diagram, we track the density–flux pairs $(\bm\rho,~\|\bbm\|^2)$ over space and time.  With the constraint active, the trajectory bends and eventually saturates on the fundamental-diagram curve (Figure \ref{fig:fd_plot_1d}).  Points associated with high-density cells (shown in red) lie precisely on this boundary, indicating that the momentum cap is fully enforced.
\begin{figure}[H]
\centering
\includegraphics[width=1\linewidth,trim={0cm 0 1cm 0},clip]{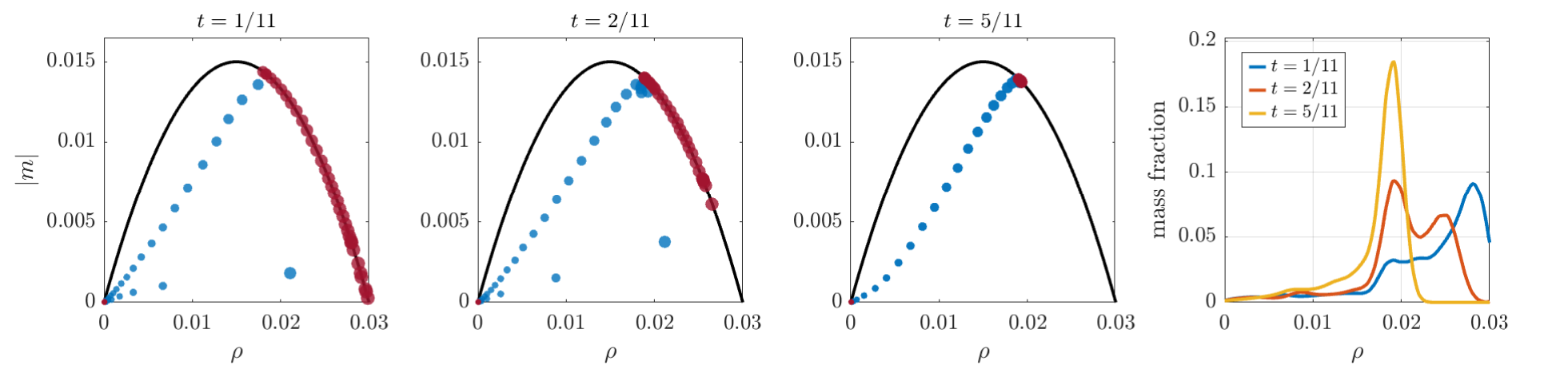}
\caption{The three left subfigures show scatter plots of density versus flux at selected times, where blue points represent unconstrained flow at constant speed, and red points lie on the Greenshields fundamental diagram, illustrating saturation in high-density regions. The rightmost subfigure shows the fraction of total mass residing at each density level for three time instances. As the system evolves, the mass distribution sharpens and concentrates near the critical density, corresponding to the point of maximum flow-rate. This illustrates the system’s natural tendency to self-organize toward the most efficient transport regime under congestion-aware constraints.}
\label{fig:fd_plot_1d}
\end{figure}

\subsection{Two-dimensional transportation}

Next, we study a two-dimensional problem that shows how the fundamental diagram constraint alters an entire transport path. The objective is to find the minimum-energy trajectory that carries all mass from an initial Gaussian distribution at $t=0$ as  $\mu \sim \mathcal{N}\Big(\begin{bmatrix}
0.5 & 0.08 \end{bmatrix}^\top, 0.07 \mathbf I \Big)$ to a target distribution $\nu \sim \mathcal{N} \Big(\begin{bmatrix} 0.5 & 0.92 \end{bmatrix}^\top,  0.07 \mathbf I \Big)$
at $t=1$, that visualized in Figure \ref{fig:no_fd}. 
\begin{figure}[H]
\centering
\includegraphics[width=0.65\linewidth,trim={0cm 1cm 0cm 0},clip]{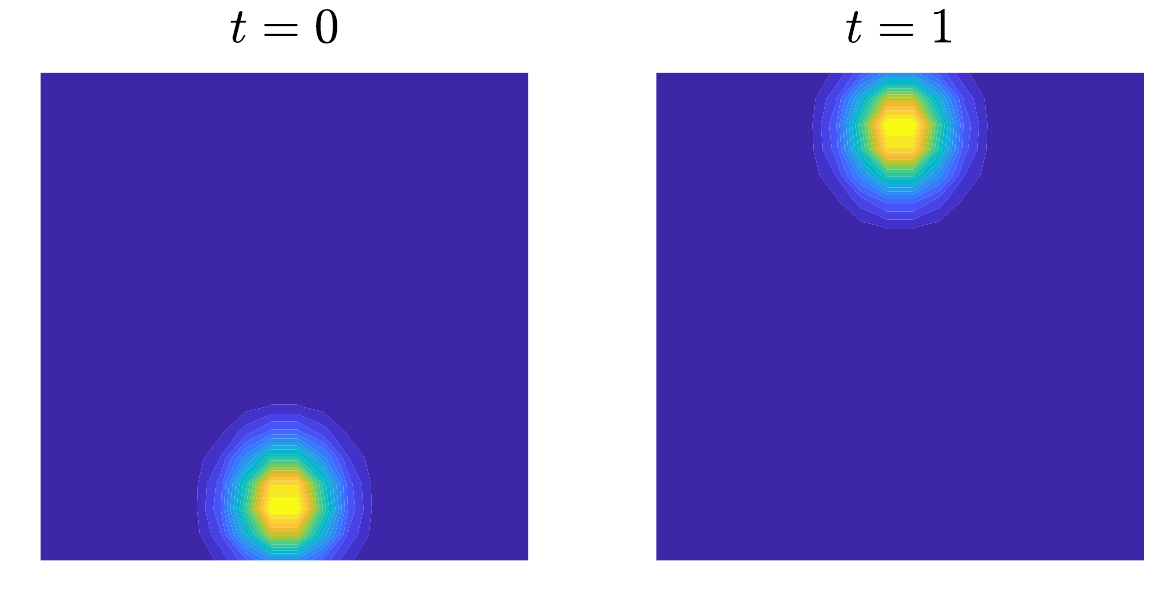}
\caption{Two-dimensional transportation constricted by fundamental diagram}
\label{fig:without_obs}
\end{figure}
Without the fundamental-diagram constraint, the flow reduces to linear interpolation: each mass element travels along a straight path, and the density block translates rigidly without any deformation as in Figure \ref{fig:no_fd}.
\begin{figure}[H]
\centering
\includegraphics[width=1\linewidth]{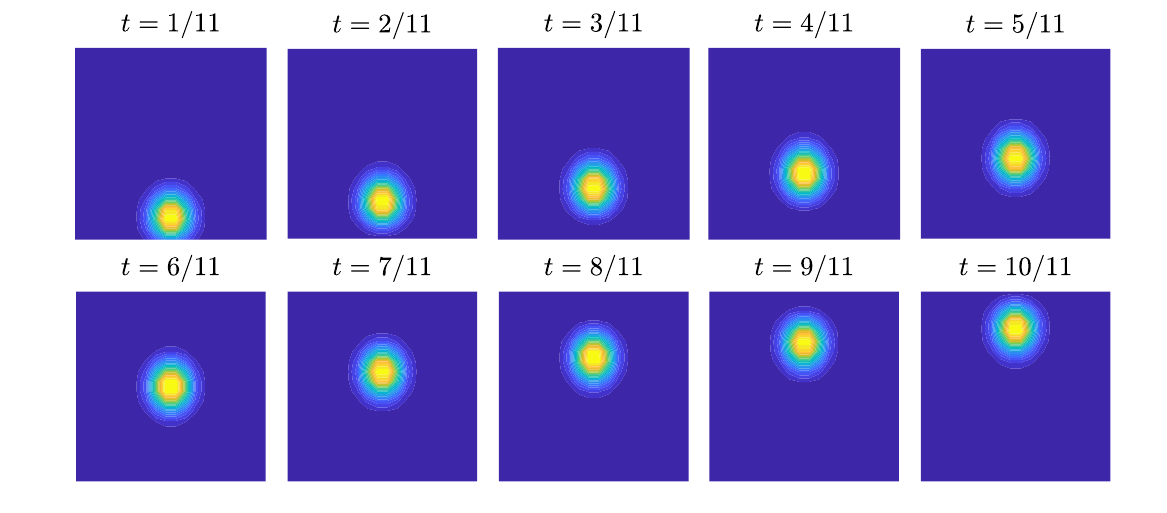}
\caption{Unconstrained Benamou–Brenier transport in two dimensions. The density evolves linearly between two Gaussians. The motion follows McCann’s interpolation: all particles travel at uniform speed along straight paths, and the shape remains throughout.}
\label{fig:no_fd}
\end{figure}
With the fundamental-diagram constraint active (free-flow speed $v_{0}=2$ and jam density $\hat{\boldsymbol{\rho}} = 0.02$), the evolution changes markedly.  As shown in Figure \ref{fig:fd}, the high-density core first flattens to satisfy the capacity limit, while mass near the periphery moves earlier and faster to create space for the congested particles. The result is an expanding, flattened front that traces the local congestion pattern. Shortly after the mass leaves its source, the density profile stabilizes, so that both density and velocity settle into a quasi-steady regime that persists until the flow begins its final adjustment to match the target distribution.
\begin{figure}[H]
\centering
\includegraphics[width=1\linewidth]{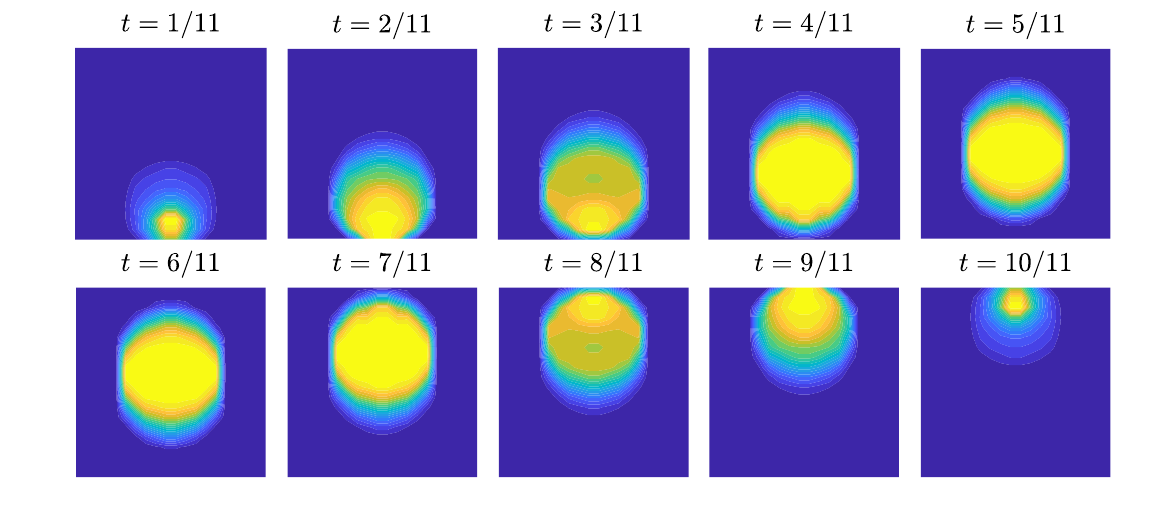}
\caption{Transport under a fundamental diagram constraint. The local flow speed is limited by density-dependent congestion. High-density regions delay and disperse before moving forward, resulting in an asymmetric and flattened mass configuration.}
\label{fig:fd}
\end{figure}
The structure of this evolution can be further understood by examining the momentum-density relation. In Figure~\ref{fig:fd_plot}, we plot the values of $(\bm\rho, \|\mathbf{\bbm}\|^2)$ across the domain at different times. Points accumulate along the constraint curve $\rho \mapsto v_0 \rho (1 - \rho/\hat{\rho})$, confirming that the numerical scheme respects the imposed bound. Congested regions operate near the curve’s peak, while less dense areas lie in the free-flow regime below.

\begin{figure}[H]
\centering
\includegraphics[width=1\linewidth,trim={0cm 0 1cm 0},clip]{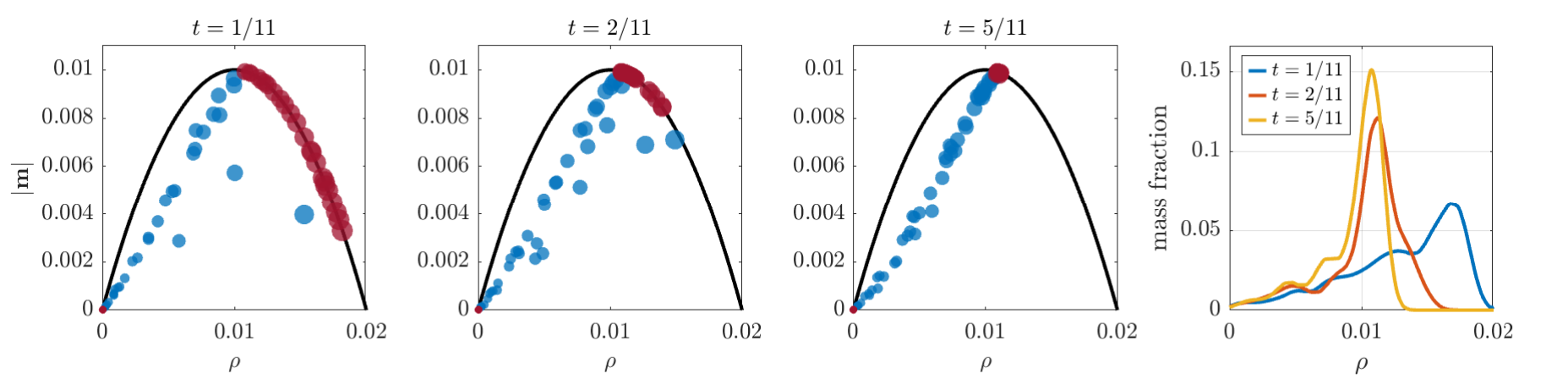}
\caption{Momentum–density relation under congestion. The scatter plot of $(\bm\rho, \|\mathbf{\bbm}\|^2)$ shows particles distributed along the fundamental diagram curve. Points on the curve indicate locally saturated flow; points below correspond to uncongested regions.}
\label{fig:fd_plot}
\end{figure}

Figure \ref{fig:no_fd_vs_fd} shows the momentum field at $t = 1/11,\; 2/11,\; 5/11$. In the unconstrained model every particle travels with the same constant velocity along a straight path, so the field simply translates. With the fundamental-diagram constraint, speed depends on local density: at $t = 1/11$ the front particles still push forward, while congestion makes the interior particles drift sideways toward the margins. The lateral spread is widest at $t = 2/11$. As density equalizes, sideways motion fades, and by $t = 5/11$, the cloud has re-contracted into a narrow, aligned stream moving at the critical speed $v_c$ associated with the critical density, whereby the flow is maximized.
\begin{figure}[htb!]
\centering
\subfloat[]{\includegraphics[width=0.8\linewidth]{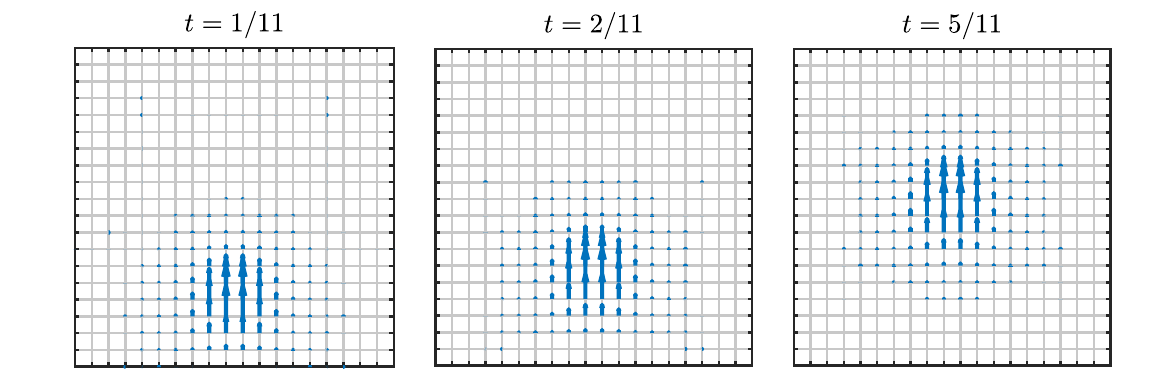}}

\subfloat[]{\includegraphics[width=0.8\linewidth]{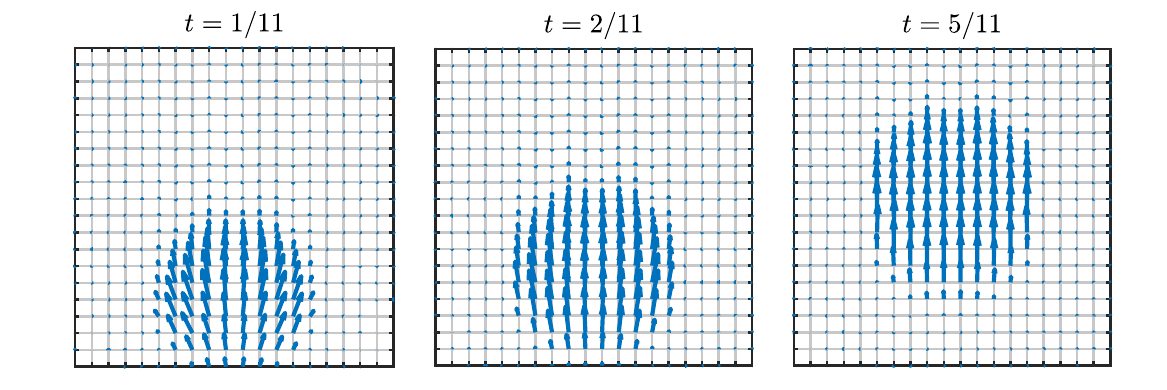}}
\caption{Flux vector field: unconstrained vs. constrained 
flow}
\label{fig:no_fd_vs_fd}
\end{figure}

\paragraph{Obstacle-constrained transport.} Given initial and target densities $\mu$ and $\nu$, we next study transport in a domain that contains an impenetrable obstacle.  The flow must now satisfy two constraints simultaneously: the fundamental diagram bound on momentum and a hard spatial exclusion that forbids mass from entering the obstacle region\footnote{The obstacle constraint is enforced through a projection step that eliminates flow outside the admissible region $\Omega_{\mathrm{free}}$. In practice, this is implemented via a simple \emph{clipping} procedure. For a discrete grid function $z(x,t)$, the projection takes the form
\begin{equation*}
\operatorname{Proj}_{\Omega_{\mathrm{free}}}(z)_{x,t} =
\begin{cases}
z_{x,t} & \text{if } (x,t) \in \Omega_{\mathrm{free}} \\
0 & \text{otherwise},
\end{cases}
\end{equation*}
that effectively applies a binary mask that forces the mass and momentum to zero outside the transport-permissible region and requires no tuning or optimization \cite{papadakis2014optimal} \cite[Section 8.1]{boyd2004convex} \cite[Section 2]{ryu2022large}.}. Situations of this type arise when traffic is routed around work zones or when flow passes through porous media with solid inclusions \cite{coclite2005traffic,tumash2021boundary}. Specifically, the obstacle in Figure \ref{fig:with_obs} is modeled as a horizontal array of rectangular blocks separated by narrow gaps, representing a slotted barrier (e.g., a line of toll booths, bridge pylons, or security bollards) through which the flow must thread.

\begin{figure}[htb!]
\centering
\includegraphics[width=0.65\linewidth,trim={0cm 1cm 0cm 0},clip]{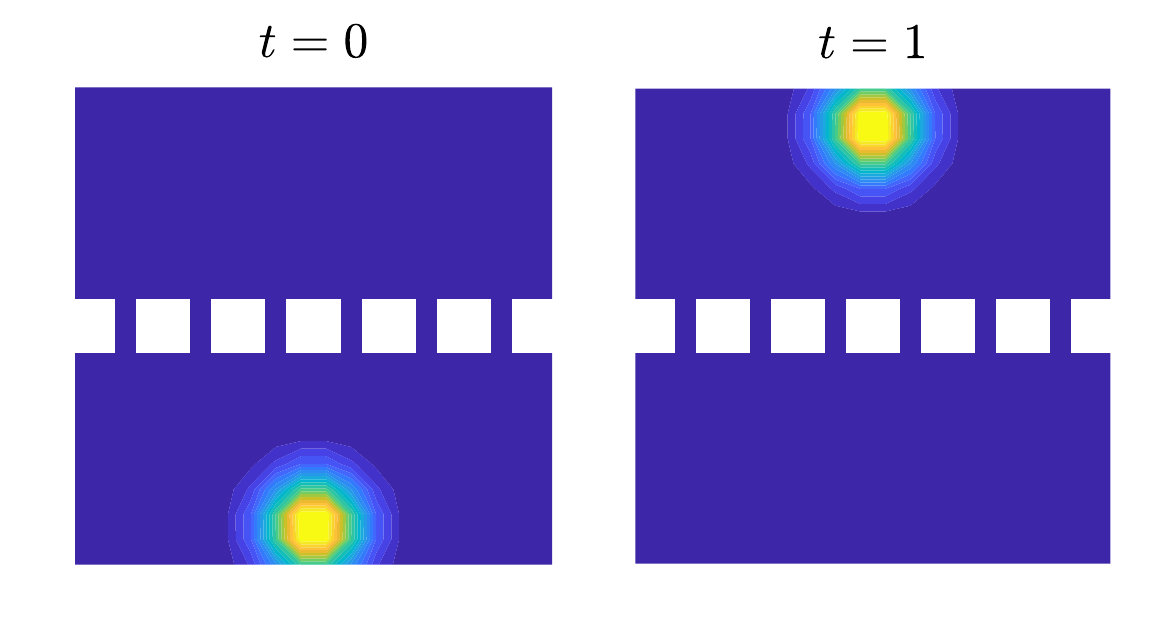}
\caption{Two-dimensional transportation constricted by fundamental diagram}
\label{fig:with_obs}
\end{figure}
Without the fundamental diagram constraint, the kinetic-energy objective simply rewards the geometrically shortest route. Consequently, almost all mass threads the two central gates shown in Figure \ref{fig:gaussian_toll}. The side entries stay idle because they lengthen the path, and the model lacks any penalty for crowding in narrow passages.
\begin{figure}[htb!]

\centering
\includegraphics[width=1\linewidth]{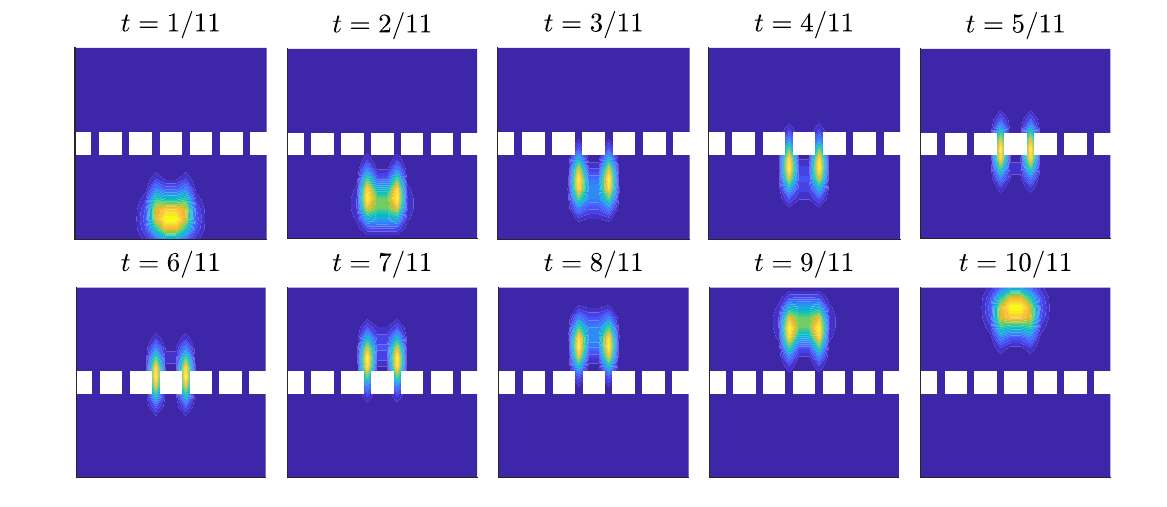}
\caption{Most of the flow passes through the two central gates because of shortest-path preference.  Peripheral routes remain unused, and high densities build up at the active gates.}
\label{fig:gaussian_toll}
\end{figure}

With the fundamental-diagram constraint in place, admissible momentum becomes density-dependent, and allowable speed is reduced in nearly saturated regions. This congestion-aware effect redistributes flow across all entries, and a natural waiting behavior emerges: some mass delays movement until congestion subsides, then proceeds. The mitigation of congestion thus emerges endogenously from the feasible-motion constraint, as no additional penalty terms are required.
\begin{figure}[htb!]
\centering
\includegraphics[width=1\linewidth]{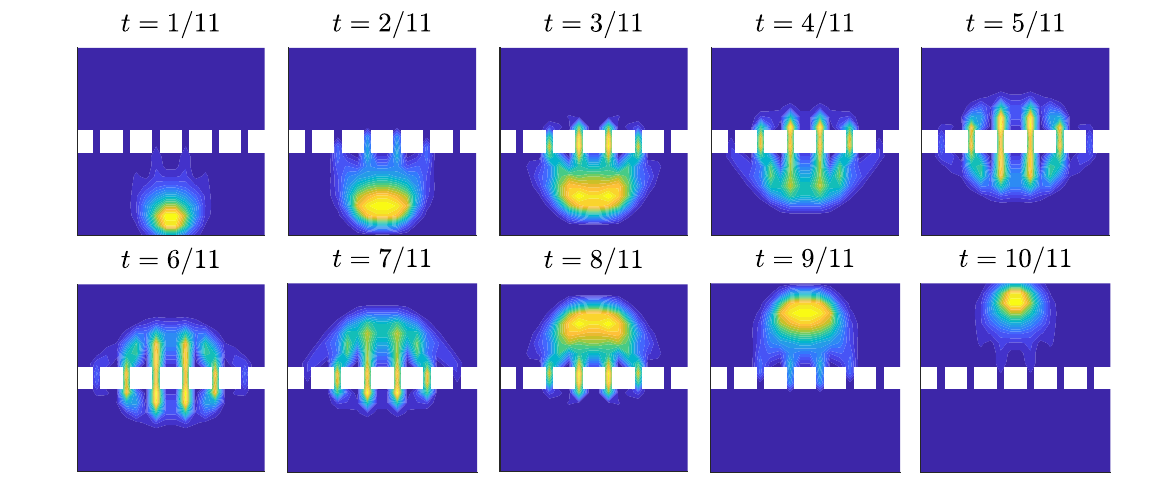}
\caption{Platoon transport through toll gates under a fundamental diagram constraint. The density redistributes across multiple gates to alleviate congestion. Delayed movement emerges in highly saturated regions, reflecting the cost of exceeding flow capacity.}
\label{fig:gaussian_toll_fd}
\end{figure}

The time evolution of the density–flux pairs $(\bm \rho, \|\bbm\|^2)$ in Figure \ref{fig:fd_toll_plot} shows the influence of the fundamental-diagram constraint on the flow. The majority of saturated mass lies on the fundamental diagram curve as the transportation starts, whereas sub-critical cells fall strictly below it. As congestion dissipates, the point cloud contracts into a narrow band beneath the curve, indicating near–free-flow operation with almost uniform flux. Toward the terminal time the cloud widens again as local velocities adjust to match the prescribed target density. 

\begin{figure}[htb]
\centering
\includegraphics[width=1\linewidth,trim={1cm 0 1cm 0},clip]{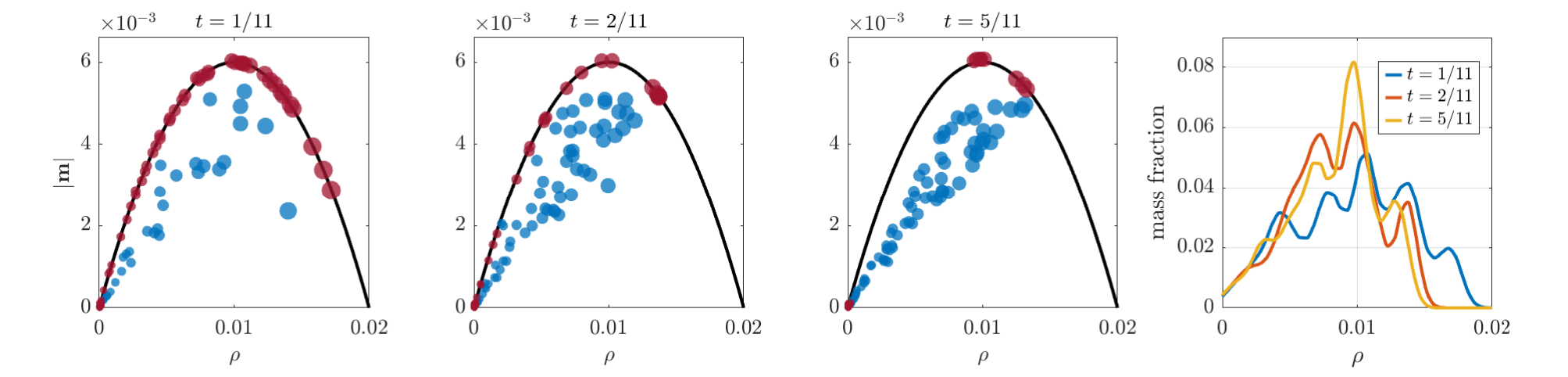}
\caption{Evolution of density–flux pairs under the fundamental-diagram constraint with $v_0 = 2$ and $\hat\rho=0.02$. The trajectory begins along the constraint curve, reflecting initial saturation, then contracts into a narrow free-flow band where most of the mass travels below capacity. Toward the final stages, the distribution broadens again as the flow adjusts to match the target density.}
\label{fig:fd_toll_plot}
\end{figure}

\begin{remark}[\bf Time-dependent obstacles]
Time-varying obstacles are captured by a time-dependent free domain $\Omega_{\text{free}}(t)$.  This single device accommodates moving lane closures in traffic, shifting crowd barriers, and pores that open or close in porous media.  In the splitting scheme, we project each iterate onto $\Omega_{\text{free}}(t)$, updating the geometry at every time step.
\end{remark}

\subsection{Convergence and Computational Efficiency}
Below, we compare the performance of Douglas–Rachford splitting (DRS) and Chambolle–Pock (CP) on the two-dimensional problem without obstacles. The evaluation includes three metrics: convergence of the kinetic-energy objective, satisfaction of the continuity constraint, and per-iteration runtime.

\paragraph{Convergence of the objective.} We compare convergence behavior for both the kinetic-energy objective and the continuity constraint. Chambolle–Pock settles in roughly $10^{2}$ iterations, while Douglas–Rachford requires closer to $10^{3}$, as shown in Figure~\ref{fig:obj_convergence}. The results show that CP is faster, and more efficient for dynamic optimal transport under fundamental diagram constraints.
\begin{figure}[htb!]
\centering
\includegraphics[width=0.85\linewidth,trim={1.3cm 0 1.5cm 0},clip]{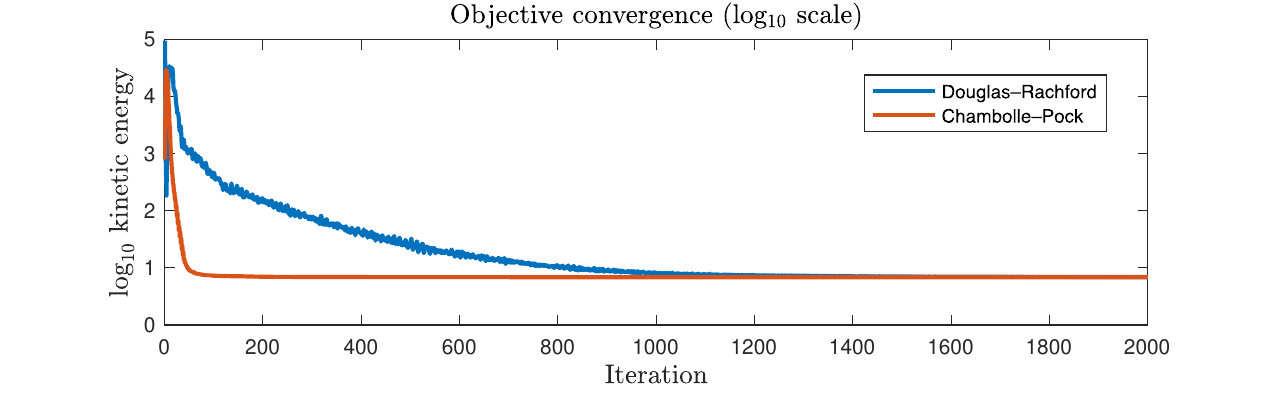}
\caption{Kinetic-energy objective versus iteration count. CP converges faster than DRS.}
\label{fig:obj_convergence}
\end{figure}
% We also track the continuity residual $\big|\operatorname{div}(\boldsymbol{m})\big| := \sum_{i,j,k} \left| \left[ \operatorname{div}(\bar{\boldsymbol{\rho}}, \bar{\boldsymbol{m}}) \right]_{i,j,k} \right|$,
% which quantifies mass-conservation error, as presented in Figure~\ref{fig:constraint_convergence}.
% In CP, the continuity operator $\bK$ enters directly in the dual update, keeping the residual near machine precision throughout. In DRS, mass conservation is imposed via projection, and the residual decreases more gradually.
% \begin{figure}[htb!]
% \centering
% \includegraphics[width=0.8\linewidth,trim={1.3cm 0 1.5cm 0},clip]{figure_p1/constraint_convergence.eps}
% \caption{Convergence of the continuity constraint $\|\mbox{div}(m)\|$ versus iteration.}
% \label{fig:constraint_convergence}
% \end{figure}

\paragraph{Runtime comparison.} We timed both methods for $1000$ iterations on the two-dimensional benchmark.  Each DRS step includes an explicit Poisson solve, while CP avoids that expense, so the gap in per-iteration cost is dominated by this single operation.  The resulting runtime traces (Figure \ref{fig:runningtime}) show CP completing the entire run in roughly half the elapsed time required by DRS.
\begin{figure}[htb!]
\centering
\includegraphics[width=0.85\linewidth,trim={1.3cm 0 1.5cm 0},clip]{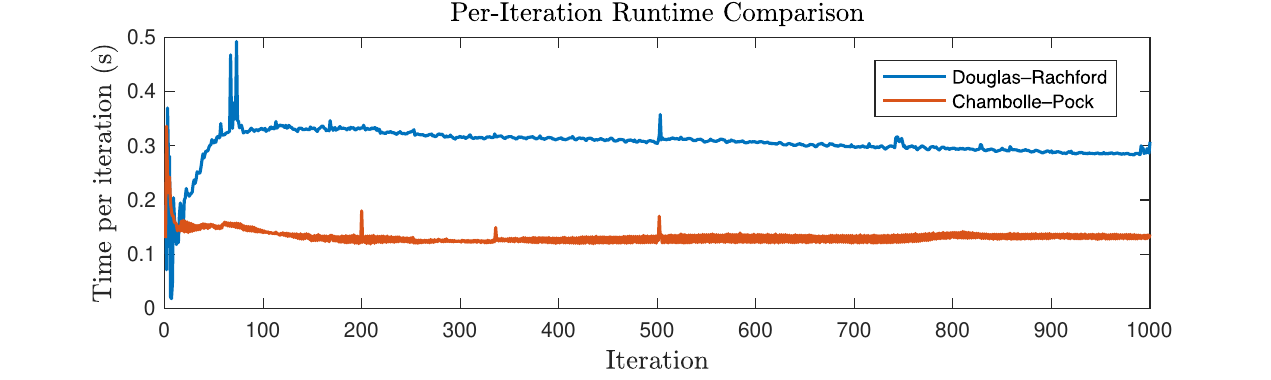}
\caption{Per‐iteration runtime of DRS and CP method.}
\label{fig:runningtime}
\end{figure}

\section{Concluding remarks}\label{sec:conclusion}
We introduce a dynamic optimal transport formulation incorporating structural momentum constraints derived from the fundamental diagram. Unlike classical models where momentum evolves freely under mass conservation, the proposed formulation imposes a pointwise upper bound on admissible flow based on local density and congestion. These constraints are not auxiliary regularizations but integral to the geometry of the feasible set. The framework preserves convexity and admits scalable solution methods while capturing capacity effects observed in traffic and crowd dynamics.

Classical empirical flow models, including those of Greenshields, Smulders, and De Romph, emerge naturally as constraint templates, rather than as externally imposed specifications. The formulation supports spatial heterogeneity, multi-class flows, and interior decision variables such as speed limits and jam densities. Beyond isolated scenarios, the model generalizes to network settings in which mass can split, merge, or queue across connected domains. This opens a principled avenue for convex, congestion-aware traffic assignment and control.

\section*{Appendix}

\subsection*{Proof of Theorem \ref{thm:unique}}
Recall we work in the Banach–Hilbert product space $\rho \in L_+^\infty \cap L_+^1\left((0, t_f) \times \mathbb{R}^d\right)$ and $m \in L^2\left((0, t_f) \times \mathbb{R}^d; \mathbb{R}^d\right)$, where the kinetic energy functional is finite and lower semi-continuous, and weak and weak-$*$ convergence are well-defined \cite[Chapter 3]{brezis2011functional}. The continuity equation $\partial_t \rho + \nabla_x \cdot m = 0$ is in the distributional sense ($\mathcal{D}\left((0, t_f) \times \mathbb{R}^d\right)$), i.e.,
% meaning
% \begin{equation*}
% \partial_t \rho + \nabla_x \cdot m = 0 \quad \text{in} \quad \mathcal{D}\left((0, t_f) \times \mathbb{R}^d\right),    
% \end{equation*}
%i.e., 
for all $\phi \in C_c^\infty((0,t_f)\times \mathbb{R}^d)$, we have 
\begin{equation*}
\int_0^{t_f} \int_{\mathbb{R}^d} \left[ \partial_t \phi(t,x)\, \rho(t,x) + \nabla_x \phi(t,x) \cdot m(t,x) \right]\, dx\,dt = 0.    
\end{equation*}
We are now in the position to show the existence of unique minimizers as follows 

\textbf{(i) Joint convexity and lower semi-continuity of the objective.} The kinetic energy functional
\begin{equation*}
\mathcal{J}(\rho, m) = \int_0^{t_f} \int_{\mathbb{R}^d} \frac{\|m(t,x)\|^2}{2 \rho(t,x)}\, dx\, dt
\end{equation*}
is jointly convex on the domain $\{(\rho, m)\ |\ \rho > 0 \}$, since
\begin{equation*}
\frac{\|m\|^2}{2\rho} = \sup_{\substack{a \in \mathbb{R},\; b \in \mathbb{R}^d \\ a + \|b\|^2/2 \le 0}} \left\{ a \rho + b \cdot m \right\}    
\end{equation*}
is a pointwise supremum over affine functions in $(\rho, m)$, and thus implies joint convexity \cite[Theorem 12.1]{rockafellar1997convex}. The functional is also lower semi-continuous under weak convergence, as it is an integral of a jointly convex integrand satisfying appropriate growth conditions.

\textbf{(ii) Convexity and closeness of the feasible set.} Define the constraint set
\begin{equation*}
\mathcal{C} := \left\{ (\rho, m) \; \middle| \;
\partial_t \rho + \nabla_x \cdot m = 0, \;\; \rho(0,x) = \mu(x), \;\; \rho(t_f,x) = \nu(x) 
\right\}.    
\end{equation*}
This set is affine due to the linearity of the continuity equation and the boundary conditions. As a subset of the Banach–Hilbert product space $L_+^\infty \cap L_+^1 \times L^2$, the set $\mathcal{C}$ is convex and sequentially closed in the weak/weak-$*$ topology. See also \cite[Chapter 6]{evans2018measure} and \cite[Chapter 8]{villani2021topics} for compactness and weak convergence results in such spaces.

The fundamental diagram constraint defines
\begin{equation*}
\mathcal{F} := \left\{ (\rho, m) \; \middle| \; \|m(t,x)\| \leq \rho(t,x)\, v_0(t,x) \left(1 - \frac{\rho(t,x)}{\hat\rho(t,x)} \right)\right\}.    
\end{equation*}
The right-hand side defines a concave quadratic map $\mathcal{Q}(\rho)$, and the constraint amounts to a pointwise hypograph concave inequality, the set $\mathcal{F}$ is convex. Moreover, temporal-spatial constraints of this form are preserved under weak convergence due to the upper semi-continuity of $\mathcal{Q}$ and standard compactness arguments in $L^p$ spaces. Hence, $\mathcal{F}$ is convex and weakly closed. Together, the feasible set $\mathcal{C} \cap \mathcal{F}$ is convex and weakly sequentially closed in the product topology.

\textbf{(iii) Nonemptiness.} By Assumption~\ref{assumption:non-empty}, there exists a feasible pair $(\rho, m) \in \mathcal{C} \cap \mathcal{F}$.

Therefore, the minimization of a jointly convex, lower semi-continuous functional over a weakly closed, convex, nonempty feasible set admits at least one minimizer by the direct method of the calculus of variations. Uniqueness is not guaranteed unless additional structural conditions are enforced, such as strict positivity of $\rho$ or strict convexity of constraints.

\bibliographystyle{plain}
\bibliography{reference1}  

\begin{thebibliography}{10}

\bibitem{agrawal2023two}
Shrey Agrawal, Venkatesan Kanagaraj, and Martin Treiber.
\newblock Two-dimensional {L}{W}{R} model for lane-free traffic.
\newblock {\em Physica A: Statistical Mechanics and its Applications}, 625:128990, 2023.

\bibitem{ambrosio2008gradient}
Luigi Ambrosio, Nicola Gigli, and Giuseppe Savar{\'e}.
\newblock {\em Gradient flows: in metric spaces and in the space of probability measures}.
\newblock Springer Science \& Business Media, 2008.

\bibitem{aw2000resurrection}
Aur\'{e}lien Aw and Michel Rascle.
\newblock Resurrection of ``second order'' models of traffic flow.
\newblock {\em SIAM journal on applied mathematics}, 60(3):916--938, 2000.

\bibitem{bardos1979first}
Claude Bardos, Alain-Yves LeRoux, and Jean-Claude N{\'e}d{\'e}lec.
\newblock First order quasilinear equations with boundary conditions.
\newblock {\em Communications in partial differential equations}, 4(9):1017--1034, 1979.

\bibitem{bauschke1994dykstra}
Heinz~H. Bauschke and Jonathan~M. Borwein.
\newblock Dykstra's alternating projection algorithm for two sets.
\newblock {\em Journal of Approximation Theory}, 79(3):418--443, 1994.

\bibitem{bauschke2017}
Heinz~H. Bauschke and Patrick~L. Combettes.
\newblock {\em Convex analysis and monotone operator theory in {H}ilbert spaces}.
\newblock Springer, 2017.

\bibitem{bauschke2013projection}
Heinz~H. Bauschke and Valentin~R. Koch.
\newblock Projection methods: {S}wiss army knives for solving feasibility and best approximation problems with halfspaces.
\newblock {\em arXiv: 1301.4506}, 2013.

\bibitem{benamou2000computational}
Jean-David Benamou and Yann Brenier.
\newblock A computational fluid mechanics solution to the {M}onge-{K}antorovich mass transfer problem.
\newblock {\em Numerische Mathematik}, 84(3):375--393, 2000.

\bibitem{boyd2004convex}
Stephen Boyd and Lieven Vandenberghe.
\newblock {\em Convex optimization}.
\newblock Cambridge: Cambridge University Press, 2004.

\bibitem{boyle1986method}
James~P. Boyle and Richard~L. Dykstra.
\newblock A method for finding projections onto the intersection of convex sets in {H}ilbert spaces.
\newblock In {\em Advances in Order Restricted Statistical Inference: Proceedings of the Symposium on Order Restricted Statistical Inference held in Iowa City, Iowa, September 11--13, 1985}, pages 28--47. Springer, 1986.

\bibitem{brezis2011functional}
Haim Br{\'e}zis.
\newblock {\em Functional analysis, {S}obolev spaces and partial differential equations}.
\newblock Springer, 2011.

\bibitem{canudas2018variable}
Carlos Canudas-de Wit and Antonella Ferrara.
\newblock A variable-length cell transmission model for road traffic systems.
\newblock {\em Transportation Research Part C: Emerging Technologies}, 97:428--455, 2018.

\bibitem{carlier2008optimal}
Guillaume Carlier, Cristian Jimenez, and Filippo Santambrogio.
\newblock Optimal transportation with traffic congestion and {W}ardrop equilibria.
\newblock {\em SIAM Journal on Control and Optimization}, 47(3):1330--1350, 2008.

\bibitem{carrillo2003kinetic}
Jos{\'e}~A. Carrillo, Robert~J. McCann, and C{\'e}dric Villani.
\newblock Kinetic equilibration rates for granular media and related equations: entropy dissipation and mass transportation estimates.
\newblock {\em Revista Matematica Iberoamericana}, 19(3):971--1018, 2003.

\bibitem{chambolle2011first}
Antonin Chambolle and Thomas Pock.
\newblock A first-order primal-dual algorithm for convex problems with applications to imaging.
\newblock {\em Journal of mathematical imaging and vision}, 40:120--145, 2011.

\bibitem{chen2016optimal}
Yongxin Chen, Tryphon~T. Georgiou, and Michele Pavon.
\newblock Optimal transport over a linear dynamical system.
\newblock {\em IEEE Transactions on Automatic Control}, 62(5):2137--2152, 2016.

\bibitem{chen2018optimal}
Yongxin Chen, Tryphon~T. Georgiou, and Allen Tannenbaum.
\newblock Optimal transport for {G}aussian mixture models.
\newblock {\em IEEE Access}, 7:6269--6278, 2018.

\bibitem{vcivcic2022front}
Mladen {\v{C}}i{\v{c}}i{\'c} and Karl~Henrik Johansson.
\newblock Front-tracking transition system model for traffic state reconstruction, model learning, and control with application to stop-and-go wave dissipation.
\newblock {\em Transportation Research Part B: Methodological}, 166:212--236, 2022.

\bibitem{coclite2005traffic}
Giuseppe~Maria Coclite, Mauro Garavello, and Benedetto Piccoli.
\newblock Traffic flow on a road network.
\newblock {\em SIAM journal on mathematical analysis}, 36(6):1862--1886, 2005.

\bibitem{daganzo1994cell}
Carlos~F. Daganzo.
\newblock The cell transmission model: {A} dynamic representation of highway traffic consistent with the hydrodynamic theory.
\newblock {\em Transportation research part B: methodological}, 28(4):269--287, 1994.

\bibitem{de1996dynamic}
Erik De~Romph.
\newblock A dynamic traffic assignment model: {T}heory and applications.
\newblock Master's thesis, Delft University of Technology, 1996.

\bibitem{dong2024monge}
Anqi Dong, Arthur Stephanovitch, and Tryphon~T. Georgiou.
\newblock Monge-{K}antorovich optimal transport through constrictions and flow-rate constraints.
\newblock {\em Automatica}, 160:111448, 2024.

\bibitem{eckstein1992douglas}
Jonathan Eckstein and Dimitri~P. Bertsekas.
\newblock On the {D}ouglas--{R}achford splitting method and the proximal point algorithm for maximal monotone operators.
\newblock {\em Mathematical programming}, 55:293--318, 1992.

\bibitem{ekren2018constrained}
Ibrahim Ekren and H.~Mete Soner.
\newblock Constrained optimal transport.
\newblock {\em Archive for Rational Mechanics and Analysis}, 227(3):929--965, 2018.

\bibitem{evans2018measure}
Lawrence~C. Evans.
\newblock {\em Measure theory and fine properties of functions}.
\newblock Routledge, 2018.

\bibitem{evans2022partial}
Lawrence~C. Evans.
\newblock {\em Partial differential equations}, volume~19.
\newblock American mathematical society, 2022.

\bibitem{figalli2021invitation}
Alessio Figalli and Federico Glaudo.
\newblock {\em An invitation to optimal transport, {W}asserstein distances, and gradient flows}.
\newblock European Mathematical Society, 2021.

\bibitem{gangbo2019unnormalized}
Wilfrid Gangbo, Wuchen Li, Stanley Osher, and Michael Puthawala.
\newblock Unnormalized optimal transport.
\newblock {\em Journal of Computational Physics}, 399:108940, 2019.

\bibitem{garavello2006traffic}
Mauro Garavello and Benedetto Piccoli.
\newblock Traffic flow on a road network using the {A}w--{R}ascle model.
\newblock {\em Communications in Partial Differential Equations}, 31(2):243--275, 2006.

\bibitem{greenshields1935study}
Bruce~D. Greenshields, J.~Rowland Bibbins, WS~Channing, and Harvey~H. Miller.
\newblock A study of traffic capacity.
\newblock In {\em Highway research board proceedings}, volume 14,1, pages 448--477. Washington, DC, 1935.

\bibitem{haasler2024scalable}
Isabel Haasler, Axel Ringh, Yongxin Chen, and Johan Karlsson.
\newblock Scalable computation of dynamic flow problems via multimarginal graph-structured optimal transport.
\newblock {\em Mathematics of Operations Research}, 49(2):986--1011, 2024.

\bibitem{helbing1995social}
Dirk Helbing and Peter Molnar.
\newblock Social force model for pedestrian dynamics.
\newblock {\em Physical review E}, 51(5):4282, 1995.

\bibitem{higham2002accuracy}
Nicholas~J. Higham.
\newblock {\em Accuracy and stability of numerical algorithms}.
\newblock SIAM, 2002.

\bibitem{hockney2021computer}
Roger~W. Hockney and James~W. Eastwood.
\newblock {\em Computer simulation using particles}.
\newblock crc Press, 2021.

\bibitem{kantorovich1942translocation}
Leonid~V. Kantorovich.
\newblock On the translocation of masses.
\newblock In {\em Dokl. Akad. Nauk. USSR (NS)}, volume~37, pages 199--201, 1942.

\bibitem{karlsson2017generalized}
Johan Karlsson and Axel Ringh.
\newblock Generalized {S}inkhorn iterations for regularizing inverse problems using optimal mass transport.
\newblock {\em SIAM Journal on Imaging Sciences}, 10(4):1935--1962, 2017.

\bibitem{korman2013insights}
Jonathan Korman and Robert~J. McCann.
\newblock Insights into capacity-constrained optimal transport.
\newblock {\em Proceedings of the National Academy of Sciences}, 110(25):10064--10067, 2013.

\bibitem{korman2015optimal}
Jonathan Korman and Robert~J. McCann.
\newblock Optimal transportation with capacity constraints.
\newblock {\em Transactions of the American Mathematical Society}, 367(3):1501--1521, 2015.

\bibitem{lighthill1955kinematic1}
Michael~James Lighthill and Gerald~Beresford Whitham.
\newblock On kinematic waves {II}. {A} theory of traffic flow on long crowded roads.
\newblock {\em Proceedings of the royal society of london. series a. mathematical and physical sciences}, 229(1178):317--345, 1955.

\bibitem{mccann1997convexity}
Robert~J. McCann.
\newblock A convexity principle for interacting gases.
\newblock {\em Advances in mathematics}, 128(1):153--179, 1997.

\bibitem{newell1993simplified}
Gordon~F. Newell.
\newblock A simplified theory of kinematic waves in highway traffic, part {I}: General theory.
\newblock {\em Transportation Research Part B: Methodological}, 27(4):281--287, 1993.

\bibitem{nikitin2021continuation}
Denis Nikitin, Carlos Canudas-de Wit, and Paolo Frasca.
\newblock A continuation method for large-scale modeling and control: from {O}{D}{E}s to {P}{D}{E}, a round trip.
\newblock {\em IEEE Transactions on Automatic Control}, 67(10):5118--5133, 2021.

\bibitem{otto2001geometry}
Felix Otto.
\newblock The geometry of dissipative evolution equations: the porous medium equation.
\newblock {\em Communications in Partial Differential Equations}, 2001.

\bibitem{papadakis2014optimal}
Nicolas Papadakis, Gabriel Peyr{\'e}, and Edouard Oudet.
\newblock Optimal transport with proximal splitting.
\newblock {\em SIAM Journal on Imaging Sciences}, 7(1):212--238, 2014.

\bibitem{parikh2014proximal}
Neal Parikh and Stephen Boyd.
\newblock Proximal algorithms.
\newblock {\em Foundations and trends{\textregistered} in Optimization}, 1(3):127--239, 2014.

\bibitem{peyre2019computational}
Gabriel Peyr{\'e} and Marco Cuturi.
\newblock Computational optimal transport: {W}ith applications to data science.
\newblock {\em Foundations and Trends{\textregistered} in Machine Learning}, 11(5-6):355--607, 2019.

\bibitem{rachev2006mass}
Svetlozar~T. Rachev and Ludger R{\"u}schendorf.
\newblock {\em Mass transportation problems: {A}pplications}.
\newblock Springer Science \& Business Media, 2006.

\bibitem{richards1956shock}
Paul~I. Richards.
\newblock Shock waves on the highway.
\newblock {\em Operations research}, 4(1):42--51, 1956.

\bibitem{ringh2024graph}
Axel Ringh, Isabel Haasler, Yongxin Chen, and Johan Karlsson.
\newblock Graph-structured tensor optimization for nonlinear density control and mean field games.
\newblock {\em SIAM Journal on Control and Optimization}, 62(4):2176--2202, 2024.

\bibitem{rockafellar1997convex}
R.~Tyrrell Rockafellar.
\newblock {\em Convex analysis}, volume~28.
\newblock Princeton university press, 1997.

\bibitem{ryu2022large}
Ernest~K. Ryu and Wotao Yin.
\newblock {\em Large-scale convex optimization: algorithms \& analyses via monotone operators}.
\newblock Cambridge University Press, 2022.

\bibitem{saad2003iterative}
Yousef Saad.
\newblock {\em Iterative methods for sparse linear systems}.
\newblock SIAM, 2003.

\bibitem{santambrogio2015optimal}
Filippo Santambrogio.
\newblock {\em Optimal transport for applied mathematicians}, volume~87.
\newblock Springer, 2015.

\bibitem{siebel2006fundamental}
Florian Siebel and Wolfram Mauser.
\newblock On the fundamental diagram of traffic flow.
\newblock {\em SIAM Journal on Applied Mathematics}, 66(4):1150--1162, 2006.

\bibitem{smulders1990control}
Stef Smulders.
\newblock Control of freeway traffic flow by variable speed signs.
\newblock {\em Transportation Research Part B: Methodological}, 24(2):111--132, 1990.

\bibitem{stephanovitch2024optimal}
Arthur Stephanovitch, Anqi Dong, and Tryphon~T. Georgiou.
\newblock Optimal transport through a toll station.
\newblock {\em European Journal of Applied Mathematics}, pages 1--25, 2024.

\bibitem{terpin2024dynamic}
Antonio Terpin, Nicolas Lanzetti, and Florian D{\"o}rfler.
\newblock Dynamic programming in probability spaces via optimal transport.
\newblock {\em SIAM Journal on Control and Optimization}, 62(2):1183--1206, 2024.

\bibitem{tumash2021boundary}
Liudmila Tumash, Carlos Canudas-de Wit, and Maria~Laura Delle~Monache.
\newblock Boundary control design for traffic with nonlinear dynamics.
\newblock {\em IEEE Transactions on Automatic Control}, 67(3):1301--1313, 2021.

\bibitem{villani2021topics}
C{\'e}dric Villani.
\newblock {\em Topics in optimal transportation}, volume~58.
\newblock American Mathematical Soc., 2003.

\bibitem{yu2020bilateral}
Huan Yu, Mamadou Diagne, Liguo Zhang, and Miroslav Krstic.
\newblock Bilateral boundary control of moving shockwave in {L}{W}{R} model of congested traffic.
\newblock {\em IEEE Transactions on Automatic Control}, 66(3):1429--1436, 2020.

\bibitem{zhang2002non}
H.~Michael Zhang.
\newblock A non-equilibrium traffic model devoid of gas-like behavior.
\newblock {\em Transportation Research Part B: Methodological}, 36(3):275--290, 2002.

\end{thebibliography}

\end{document}